\documentclass[12pt]{article}
\usepackage{url}
\usepackage{hyperref}
\usepackage[final]{optional}
\usepackage[a4 paper, margin=2cm]{geometry}

\usepackage{enumerate}
\usepackage{graphics}
\usepackage{tikz}
%\usetikzlibrary{cd}
%\usepackage{tikz-cd}
\usepackage{tikz-3dplot} 
%\usetikzlibrary{calc,fadings,decorations.pathreplacing}
\usepackage{verbatim}
\usepackage{slashed}
\usepackage{hyperref}

\usepackage{datetime}
\usepackage{color}
\usepackage{amsmath,amsfonts,amsthm,amssymb}
\usepackage{epstopdf}
\usepackage[all,cmtip]{xy}
\usepackage{accents}
\DeclareGraphicsRule{.tif}{png}{.png}{`convert #1 `dirname #1`/`basename #1 .tif`.png}
\newtheorem{theorem}{Theorem}[section]
\newtheorem{prop}[theorem]{Proposition}
\newtheorem{definition}[theorem]{Definition}
\newtheorem{corollary}[theorem]{Corollary}
\newtheorem{lemma}[theorem]{Lemma}
\newtheorem{remark}[theorem]{Remark}

\newtheorem{ass}{Assumption}
\numberwithin{equation}{section} 
%\numberwithin{theorem}{section}
%\numberwithin{lemma}{section}
%\numberwithin{corollary}{section}
%\numberwithin{remark}{section}
%\numberwithin{definition}{section}

\newcommand{\R}{\mathbb{R}}
\newcommand{\N}{\mathbb{N}}

\newcommand{\Sp}{\mathbb{S}}

\newcommand{\mbb}{\mathbb}
\newcommand{\C}{\mbb{C}}

\newcommand{\lan}{\langle}
\newcommand{\la}{\lambda}
\newcommand{\ra}{\rangle}

\newcommand{\h}{\mathcal{H}}

\newcommand{\In}{\subset}

\newcommand{\om}{\omega}
\newcommand{\dl}{{\delta}}
\newcommand{\Dl}{{\Delta}}

\newcommand{\al}{{\alpha}}

\newcommand{\ed}{{\rm d}}
\newcommand{\id}{\,\,\ed}

\newcommand{\D}{{\nabla}}
\newcommand{\Db}{{\nabla^{\bot}}}
\newcommand{\ti}[1]{{\tilde{#1}}}
\newcommand{\eps}{{\varepsilon}}
\newcommand{\fr}[2]{\frac{#1}{#2}}
\newcommand{\sm}{{\setminus}}

\newcommand{\p}{\varrho}
\newcommand{\pl}[2]{{\frac{\partial #1}{\partial #2}}}
\newcommand{\ppl}[3]{{\frac{\partial^2 #1}{\partial #2 \partial #3}}}

\newcommand{\zb}{\overline{z}}
\newcommand{\db}{\overline{\partial}}
\newcommand{\de}{\partial}
\newcommand{\nn}{\nonumber}
\newcommand{\spt}{{\rm{spt}}}

\newcommand{\Si}{\Sigma}

\newcommand{\twopartdef}[4]
{
	\left\{
		\begin{array}{ll}
			#1 & \mbox{if  } #2 \bigskip \\
			#3 & \mbox{if  } #4
		\end{array}
	\right.
}
%}

%}

%} 

\def\Xint#1{\mathchoice
   {\XXint\displaystyle\textstyle{#1}}%
   {\XXint\textstyle\scriptstyle{#1}}%
   {\XXint\scriptstyle\scriptscriptstyle{#1}}%
   {\XXint\scriptscriptstyle\scriptscriptstyle{#1}}%
   \!\int}
\def\XXint#1#2#3{{\setbox0=\hbox{$#1{#2#3}{\int}$}
     \vcenter{\hbox{$#2#3$}}\kern-.5\wd0}}

\def\dashint{\Xint-}

\DeclareMathOperator{\Vol}{\rm Vol}
\DeclareMathOperator{\dist}{\rm dist}

\newcommand{\vlinesub}[1]{\vline_{_{_{_{_{_{_{_{\,#1}}}}}}}}}

\newcommand{\plaz}{\partial_\la z}
%for tikz pictures
%\usepackage[active,tightpage]{preview}  %generates a tightly fitting border around the work
%\PreviewEnvironment{tikzpicture}
%\setlength\PreviewBorder{2mm}

%Melanie's commands
\newcommand{\norm}[1]{\Vert#1\Vert}  
 
\newcommand{\na}{\nabla}
\newcommand{\II}{\mathcal{I}}
\newcommand{\VV}{\mathcal{V}}
\newcommand{\lam}{\lambda}
\newcommand{\beq}{\begin{equation}}
\newcommand{\eeq}{\end{equation}}
\newcommand{\beqs}{\begin{equation*}}
\newcommand{\eeqs}{\end{equation*}}
\newcommand{\abs}[1]{\vert#1\vert}
\newcommand{\babs}[1]{\left| #1 \right|} 
\newcommand{\Loj}{\L{}ojasiewicz}
\newcommand{\beqa}{\begin{equation}\begin{aligned}}
\newcommand{\eeqa}{\end{aligned}\end{equation}}
\newcommand{\beqas}{\begin{equation*}\begin{aligned}}
\newcommand{\eeqas}{\end{aligned}\end{equation*}}
\newcommand{\half}{\frac12}
\newcommand{\thalf}{\tfrac12}
\newcommand{\si}{\sigma}
\newcommand{\nuu}{\nu}%need some notation for exponent in abstract section, can just replace this command when a better one is found
\newcommand{\be}{\beta}
\newcommand{\FF}{\mathcal{F}}
% BUBBLE COMMAND 
\newcommand{\bl}{\pi}
\newcommand{\ZZ}{\mathcal{Z}}
\newcommand{\ZZO}{{\mathcal{Z}_{\lambda_0}}}
\newcommand{\HH}{\mathcal{H}}
\newcommand{\BB}{\mathcal{B}}
\newcommand{\DD}{\mathbb{D}}
%COMMENTS

%\def\blbox{\quad \vrule height7.5pt width4.17pt depth0pt}
%\newcommand{\cmt}[1]{\opt{draft}{\textcolor[rgb]{0.5,0,0}{
%$\LHD$ #1 $\RHD$\marginpar{\blbox}}}}
%\newcommand{\gcmt}[1]{\opt{draft}{\textcolor[rgb]{0,0.5,0}{
%$\LHD$ #1 $\RHD$\marginpar{\blbox}}}}

%Ben's comments

%Melanie's comments

%Andrea's comments

\allowdisplaybreaks

\begin{document}
\title{\L{}ojasiewicz inequalities near simple bubble trees}
\author{Andrea Malchiodi, Melanie Rupflin and Ben Sharp}
\date{\today}
\maketitle

\begin{abstract}
	In this paper we prove a gap phenomenon for critical points of the $H$-functional on closed non-spherical surfaces when $H$ is constant, and in this setting furthermore prove that sequences of almost critical points satisfy \L{}ojasiewicz inequalities as they approach the first non-trivial bubble tree.

	To prove these results we derive sufficient conditions for \L{}ojasiewicz inequalities to hold near a finite-dimensional submanifold of almost-critical points for suitable functionals on a Hilbert space. 	
	\end{abstract}

\section{Introduction}

In the study of (almost-)critical points of an energy functional one is often confronted with the problem that the weakly-obtained limiting object does not have the same topology; for example sequences of (almost-)harmonic maps from a surface $\Sigma$ can converge to a whole bubble-tree of harmonic maps rather than just one harmonic map defined on $\Sigma$, blow-ups of singularities of compact solutions of mean curvature flow can be non-compact shrinkers etc. A major challenge in such situations is that the seminal results of Simon \cite{Simon} on \Loj-Simon inequalities, one of the most powerful tools in the analysis of (almost-)critical points of analytic energies, are not applicable.

In spite of the significant obstacle posed by a change in topology, for a few very important problems \Loj-Simon inequalities have been obtained: namely in the major works of Topping \cite{Top04,Top04A} for almost-harmonic maps between two-dimensional spheres, of Colding-Minicozzi \cite{ColdMini15} for generic singularities of the mean curvature flow and of Chodosh-Schulze \cite{ChSchulze} for conical singularities of the mean curvature flow.

In this paper we establish \Loj-Simon inequalities for sequences of maps on non-spherical closed surfaces which are almost-critical points of the $H$-surface energy and weakly converge to a constant map but blow a single bubble: i.e. the maps bubble-converge to the first non-trivial bubble tree.

In fact we develop a general method that allows us to prove \Loj-estimates near a suitable submanifold $\ZZ$ of almost critical points of a functional $\mathcal{I}$ on a Hilbert space $\h$, on which the second variation is non-degenerate in orthogonal directions, as one might obtain by carefully gluing non-degenerate critical points into domains of different topology.

 To this end we will prove an abstract result that gives sufficient conditions 
 on the first and second variation of $\II$ along the finite dimensional submanifold  $\ZZ$, in order for \Loj-estimates to hold in a uniform $\h$-neighbourhood of $\ZZ$, see Theorem \ref{thm:abstract-main}.

We then apply these results for (almost-)critical points of the $H$-functional (for $H\equiv 1$)
\begin{equation}\label{eq:defE}
E(u):= \frac12\int_{\Sigma} |\D u|^2 \id V_g - 2V(u),\quad u\in H^1(\Sigma, \R^3) 
\end{equation} on closed oriented Riemannian surfaces $(\Sigma,g)$ where $V(u)$ is the enclosed volume given by 
$$V(u)=\frac13 \int_\Sigma u\cdot (u_{x_1}\wedge u_{x_2}) \id x $$ in oriented  coordinates $\{x^i\}$, see \eqref{eq:defEcf} for a coordinate-free definition of $E$. We note that Wente's famous inequality \cite{We69} implies that $E(u)$ is well-defined for all $u\in H^1$ (not necessarily bounded), see Remark \ref{rmk:int}. Since $E$ is invariant under translation in the target, we consider this functional simply on the Hilbert space 
\begin{equation*}%\label{eq:defHilb}
\dot H^1(\Sigma,\R^3) := \left\{v\in H^1(\Sigma,\R^3) : \int_\Sigma v \id V_g = 0\right\}	
\end{equation*}
with the inner product $\langle a,b \rangle_{\dot H^1} = \int_\Sigma \D a \cdot \D b \id V_g$. As the functional and this inner product are conformally invariant, we may assume in the following that $g$ is a metric of constant curvature $K_g\in\{-1,0,1\}$ with unit area when $K_g\equiv 0$. 

We recall that critical points of \eqref{eq:defE} solve 
\begin{equation}\label{eq:Heq}
-\Dl_x u =
2 u_{x_1}\wedge u_{x_2},  
\end{equation}
in oriented isothermal coordinates and are thus smooth by Wente's regularity theory \cite{We69}. It is well-known that conformal critical points $u$ to \eqref{eq:Heq} are (branched) immersed constant mean curvature surfaces in $\R^3$. 

There are many notable works studying properties of $H$-surfaces and related problems on two-dimensional domains, including the results in \cite{BC85,We69} which are crucial to the present paper, and we refer the reader to the books of Struwe \cite{Str88,Str08} for an overview of further results.   

One easily checks that, \emph{amongst critical points} $u$ of $E$, $E(u)=V(u)=\frac16 \int_{\Sigma} |\D u |^2$. So, as pointed out by Ge in \cite{G98}, combining the natural bound on the area  $A(u)=\int_\Sigma |u_{x_1}\wedge u_{x_2}|\id x\leq \frac12 \int_\Sigma |\D u|^2$ with the isoperimetric inequality $36\pi |V(u)|^2 \leq A(u)^3$ (see e.g. \cite{Ra47}) and the associated rigidity statement yields 
\begin{equation*}%\label{eq:Ebound}
E(u)\geq \frac{4\pi}{3}
\end{equation*}
 amongst critical points, with equality if and only if $u:\Sp^2\to \R^3$ is a degree one conformal parametrisation of a unit sphere. Here and in the following all quantities on $\Sigma$ are computed using the metric $g$ unless specified otherwise.  

This already implies that there are no critical points of $E$ from a Riemannian surface $\Sigma$ with genus $\gamma \geq 1$ whose energy is $\frac{4\pi}{3}$. While 
it is possible to approach the energy $\frac{4\pi}{3}$ through critical points on degenerating tori, see \cite{We01}, our first result excludes the 
 possibility that the energy levels of critical points from any \emph{fixed} surface of genus $\gamma \geq 1$ accumulate near this value.

\begin{theorem} \label{t:gap}
	Let $(\Sigma, g)$ be a closed oriented Riemannian surface of positive genus with Gauss curvature  $K_g \in \{-1,0\}$, then there exists $\delta > 0$ such that	for every critical point $u$ of $E$ 
	$$
	 E(u) \geq \frac{4\pi}{3} + \delta. 
	$$
\end{theorem}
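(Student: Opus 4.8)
The plan is to argue by contradiction, combining a concentration--compactness analysis at the threshold energy with the finite-dimensional reduction near the first bubble tree. Suppose Theorem \ref{t:gap} fails for $(\Sigma,g)$. Then there is a sequence of critical points $u_k$ of $E$ with $E(u_k)\to \frac{4\pi}{3}$. Since the $u_k$ are critical we have $E(u_k)=\frac16\int_\Sigma |\D u_k|^2$, so the Dirichlet energies satisfy $\int_\Sigma |\D u_k|^2 \to 8\pi$; after subtracting constants we may regard the $u_k$ as a bounded sequence in $\dot H^1(\Sigma,\R^3)$.

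First I would run the bubbling analysis for solutions of the $H$-system $-\Dl_x u = 2 u_{x_1}\wedge u_{x_2}$ (in the spirit of Brezis--Coron and Wente), using that by Wente's inequality the volume term is continuous along energy-bounded weakly convergent sequences (Remark \ref{rmk:int}). Up to a subsequence, one of two things happens. Either $u_k\to u_\infty$ strongly in $\dot H^1$, in which case $u_\infty$ is a smooth critical point with $E(u_\infty)=\frac{4\pi}{3}$; but the equality case of the isoperimetric argument forces $u_\infty$ to be a degree-one conformal parametrisation of a round sphere, which is impossible on a domain of genus $\gamma\geq 1$. Or concentration occurs: since every nonconstant bubble carries Dirichlet energy at least $8\pi$ and the total energy converges to exactly $8\pi$, there is precisely one bubble, no energy is lost in the neck, the weak limit on $\Sigma$ is constant, and after a M\"obius rescaling centred at a concentration point $p_k\to p\in\Sigma$ the maps converge to a fixed standard sphere $\omega$. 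Thus the $u_k$ bubble-converge to the first non-trivial bubble tree.

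The heart of the matter is to rule out this second alternative near the threshold. Here I would invoke the finite-dimensional reduction that underlies this paper: gluing $\omega$ into $(\Sigma,g)$ at scale $\la$ and base point $p$ (together with the residual M\"obius and rotation parameters) produces the manifold $\ZZ$ of approximate critical points, and by construction the $u_k$ lie in a shrinking $\HH$-neighbourhood of $\ZZ$. Since the $u_k$ are \emph{exact} critical points of $E$, the Lyapunov--Schmidt/\Loj\ analysis of Theorem \ref{thm:abstract-main} forces their reduced parameters $(\la_k,p_k,\dots)$ to be critical points of the reduced energy $E|_{\ZZ}$. I would then compute the leading-order expansion
\begin{equation*}
E\big|_{\ZZ}(\la,p,\dots) = \frac{4\pi}{3} + w(\la,p) + (\text{lower order}),
\end{equation*}
and show that, precisely because $K_g\in\{-1,0\}$ and $\gamma\geq 1$ (so that $(\Sigma,g)$ is \emph{not} the round sphere and the scale-invariance that makes every bubble critical on $\Sp^2$ is broken), the function $w$ is strictly monotone in $\la$ for small $\la$ with a definite, curvature-determined sign. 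Consequently $E|_{\ZZ}$ has no critical points with $\la$ small, so $\la_k$ cannot tend to zero --- contradicting concentration --- and the critical values are bounded below by $\frac{4\pi}{3}+\delta$.

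The main obstacle is the last step: establishing the \emph{clean} (no-neck) bubbling and, above all, computing the sign of the leading correction $w$ and showing it is nondegenerate in the scale direction. This is exactly where the geometric hypotheses enter and where the delicate Wente-type neck estimates and the second-variation nondegeneracy of $\omega$ (modulo the conformal group) must be combined; the abstract result of Theorem \ref{thm:abstract-main} is tailored to package precisely this information into the required \Loj\ inequality.
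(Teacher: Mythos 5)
Your overall strategy is the one the paper itself follows: argue by contradiction, use Brezis--Coron bubbling (Theorem \ref{thm:PS} together with the energy quantization, as in Remark \ref{rmk:A2}) to conclude that a sequence of critical points with $E(u_k)\to\frac{4\pi}{3}$ must have trivial weak limit, produce exactly one degree-one bubble, and hence approach the adapted-bubble manifold $\ZZ$ with scales tending to infinity; then exclude exact critical points near $\ZZ$ by a finite-dimensional reduction exploiting the definite sign of the leading energy correction. Where you diverge in execution is the exclusion step: you invoke the classical bifurcation-equation picture (``the reduced parameters must be critical points of the reduced energy''), whereas the paper deliberately avoids this and instead tests $\ed E(u)=0$ directly against the normalized scale direction $y_z=\partial_\la z/\norm{\partial_\la z}_{\dot H^1}$, which via Lemmas \ref{lemma:abstract1} and \ref{lemma:abstract2} yields $f_0(\la)\leq 0$, a contradiction (this is Theorem \ref{thm:abstract-critical-points}). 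The paper explicitly notes your route ``could be used to establish the above result,'' so it is viable, but it is slightly more burdensome: critical points of $E$ near $\ZZ$ correspond to critical points of $z\mapsto E(z+w(z))$ on the \emph{perturbed} manifold obtained by solving the auxiliary equation, \emph{not} of $E|_{\ZZ}$ as you state, and since elements of $\ZZ$ are not critical one must estimate $w(z)$ and the $\la$-derivative of the induced error before the expansion of $E$ on $\ZZ$ says anything about nearby critical points. Concretely, this requires exactly the inputs the paper assembles: the orthogonal non-degeneracy of $\ed^2E$ (Lemma \ref{lem:nondeg}), the bounds $\norm{\ed E(z_{\la,a})}_{H^{-1}}\leq C\la^{-2}(\log\la)^{1/2}$ and $\norm{\ed^2E(z_{\la,a})[\partial_\la z_{\la,a},\cdot]}_{H^{-1}}\leq C\la^{-3}(\log\la)^{1/2}$ of Lemma \ref{lemma:key-estimates}, and the \emph{derivative} expansion $\partial_\la E(z_{\la,a})=\frac{4\pi}{\la^3}\mathcal{J}(a)+O(\la^{-4})$ of Lemma \ref{lemma:principal-term} (a $C^0$ expansion of the energy alone cannot rule out critical points, since the $O(\la^{-3})$ error could oscillate).

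The genuine gap is the decisive input that you yourself flag as ``the main obstacle'': the strict sign and non-degeneracy of the leading correction. Your proposal attributes it to curvature and to broken scale invariance, but this is not how it works and is not proved (nor provable) at that level of generality: the coefficient is $\mathcal{J}(a)=\lim_{x\to 0}(\partial_{y_1}\partial_{x_1}+\partial_{y_2}\partial_{x_2})G_a(x,0)$, built from the regular part of the Green's function, and its negativity is a global, complex-analytic fact. The paper identifies $-\frac{1}{\pi}\bar\partial_\zeta\partial_z G$ with the Bergman kernel (Proposition \ref{p:Bergman-Green}) and deduces $\mathcal{J}(a)=-8\pi\sum_j|\phi_j(a)|_g^2$ for an orthonormal basis $\{\phi_j\}$ of holomorphic one-forms, which is strictly negative, uniformly in $a$, because by Riemann--Roch the holomorphic differentials have no common zero on a surface of positive genus (Remark \ref{rmk:Jsign}); on flat tori one gets $\mathcal{J}\equiv -2\pi$ directly. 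Note that on $\Sp^2$ one has $\mathcal{J}\equiv 0$, so no local curvature heuristic can supply the sign --- it is precisely the genus hypothesis, through Riemann--Roch, that makes the argument close. Without this computation, and without the quantitative lemmas listed above, your outline reproduces the architecture of the paper's proof but leaves its load-bearing steps unproven.
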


The novel aspect of this result is that we are comparing the energy of the standard bubble to the energy of critical points $u$ on surfaces with different topologies.

\begin{remark}
It follows easily from our proof of this result that the constant $\delta$ depends only on the genus and the injectivity radius, where in the case of flat tori we additionally ask that the metric has unit area.  	
\end{remark}

On the other hand, given \emph{any} closed oriented Riemannian surface $\Sigma$,  one can easily construct a sequence of maps $\{u_k\} \In \dot H^1(\Sigma,\R^3)$ for which $E(u_k)\to \frac{4\pi}{3}$ and $\ed E(u_k)\to 0$ in $H^{-1}$ (or even in $L^2$, e.g. pick a sequence $\{u_k\}\In\mathcal{Z}$ defined as in Section \ref{sec:bubble-space} so that $\la_k=\la(z_k) \to \infty$).

The bubbling analysis of Brezis-Coron \cite{BC85}, see Appendix \ref{sec:PS}, implies that any such Palais-Smale sequence 
subconverges to a bubble tree consisting of a trivial base map and a bubble and it is hence natural to ask whether there is a relation between the rate at which the energy concentrates, the distance of these maps to the bubble tree (defined in a suitable way) and the rate at which $\ed E(u_k)$  tends to zero.

The corresponding question for almost critical points that are defined on $\Sp^2$ and that converge to a solution of \eqref{eq:Heq} on $\Sp^2$ is addressed by the seminal results of Simon \cite{Simon} which yield that maps close to the set $\BB^1_{\Sp^2}$ of degree one conformal maps from $\Sp^2$ to $\Sp^2$
satisfy \Loj-inequalities of the form
\beqs
\dist_{\dot H^1}(u,\BB^1_{\Sp^2})\leq C\norm{\ed E(u)}_{L^2(\Sp^2)} \text{\quad and  \quad}
 \left|E(u)-\tfrac{4\pi}{3}\right|\leq C \norm{\ed E(u)}_{L^2(\Sp^2)}^2.
\eeqs
 We note that the reason for the optimal exponents in the above inequalities is that there are no non-trivial Jacobi fields, compare
 \cite[Lemma 9.2]{CM05}.

If we however want to analyse a sequence of almost critical points $u_k$ of an energy $\II$ that are defined on a domain $\Si_0$ but that converge in some sense to a limiting map $u_\infty$ defined on a domain $\Si_\infty$ of different topology, the classical \Loj-Simon inequalities \cite{Simon} do not apply.

In the important works of Topping \cite{Top04A}, Colding-Minicozzi \cite{ColdMini15} and Chodosh-Schulze \cite{ChSchulze} the major problem of changing topology has been addressed using distinct approaches. In \cite{Top04,Top04A} Topping performs a very careful bubbling analysis for nearly-harmonic maps between spheres which exploits the holomorphic-antiholomorphic decomposition of the energy to establish optimal \Loj-inequalities and precise bubble-scale estimates near resulting bubble trees. 

Colding and Minicozzi \cite{ColdMini15} take a different approach to establish uniqueness of blow-ups for generic, and hence cylindrical, singularities of mean curvature flow. They prove \Loj-inequalities valid for all submanifolds which are close to and graphical over a cylinder on a large ball, and combine these with a subtle analysis of the asymptotic behaviour of the flow to establish fast decay of the error terms appearing in these \Loj-inequalities. 

In \cite{ChSchulze} Chodosh-Schulze first use a dimension-reduction technique to obtain a \Loj-inequality for entire graphs over a conical self shrinker of mean curvature flow. Via localisation they then establish \Loj-Simon inequalities that allow them to prove uniqueness of blow-ups for conical singularities of mean curvature flow.

In the present paper we use a somewhat different approach to analyse sequences $u_k:\Sigma \to \R^3$ bubble converging to a trivial body map and a single bubble $u_\infty:\Sp^2 \to \R^3$ which is inspired in particular by the works of Isobe \cite{T00} and of Chanillo and the first author \cite{CM05} on asymptotic analysis for the $H$-surface energy with small boundary data on two-dimensional domains.  

Rather than altering the $\{u_k\}$'s to view them as maps on the limiting domain and exploiting a \Loj-inequality thereon, we carefully modify the collection of possible limits $\{u_\infty:\Sp^2\to \R^3\}$ to obtain a finite dimensional manifold $\ZZ$ of \textit{adapted bubbles} which are defined on the original surface and which reflect the possible blow-up behaviour. 

The obvious disadvantage of working on the original domain $\Sigma$ is that these adapted bubbles will not be critical points, so we cannot use the approach of Simon to prove \Loj-inequalities. We instead prove \Loj-inequalities for maps which are $\dot H^1$-close to this manifold of adapted bubbles. 

The advantage of this approach is that we will not need to modify 
the sequence of almost critical points  $u_k:\Si\to \R^3$ that undergoes bubbling 
and indeed will require very little information about the behaviour of sequences of maps $u_k:\Si\to \R^3$. Instead, we shall see that the key assumptions required in our abstract results, which we state and prove in Section \ref{s:abstract}, 
concern only the behaviour of the energy and its variation on the, in our case explicitly constructed, set of adapted bubbles. 
As we are concerned with the study of maps that approach the first non-trivial bubble tree
the set of adapted bubbles $\ZZ:=\{Rz_{a,\la}\}$, which will be defined in detail in Section \ref{sec:bubble-space}, is described by their concentration point 
$a\in \Si$, a rotation $R\in SO(3)$ in the target and a (large) scaling parameter $\lambda$.

Our main result can then be formulated as follows:

\begin{theorem}\label{thm:main}
	Let $(\Sigma, g)$ be a closed oriented Riemannian surface of positive genus with Gauss curvature  $K_g \in \{-1,0\}$ and let $\mathcal{Z}$ be the set of adapted bubbles defined in Section \ref{sec:bubble-space}.  Then there exist $\eps>0$, $\la_*>1$ and $C\in \R$ so that the following holds: 
	Suppose $u\in \dot H^1(\Sigma,\R^3)$ satisfies  
	$${\rm{dist}}_{\dot H^1}(u,\mathcal{Z}_{\la_*}) <\eps,$$ 
	where $\ZZ_{\la_*}$ is the subset of $\ZZ$ consisting of adapted bubbles with scaling parameter $\la(z)\geq \la_*$.
	
	Then for any $z\in \mathcal{Z}$ with ${\rm{dist}}_{\dot H^1}(u,\mathcal{Z}) = \|u-z\|_{\dot H^1}$ the bubble scale is so that
	\beq \label{est:mainloj1}
	\lambda(z)^{-1}\leq C(1+|\log \|\ed E(u)\|_{L^2(\Si,g)}|^\fr12) \|\ed E(u)\|_{L^2(\Si,g)},
	\eeq
	the distance of $u$ from the adapted bubble set is bounded by
	\beq  \label{est:mainloj2}
{\rm{dist}}_{\dot H^1}(u,\mathcal{Z}) 
\leq C\|\ed E(u)\|_{L^2(\Si,g)}\eeq
and we have a gradient \Loj-inequality of the form 
\beqs%\label{est:mainloj3}
\left|E(u)-\frac{4\pi}{3}\right|\leq C (1+|\log \|\ed E(u)\|_{L^2(\Si,g)}|) \|\ed E(u)\|_{L^2(\Si,g)}^2.
\eeqs
\end{theorem}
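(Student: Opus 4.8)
\emph{Proof strategy.} The plan is to deduce Theorem~\ref{thm:main} from the abstract result Theorem~\ref{thm:abstract-main}, applied to the functional $\II=E$ on $\h=\dot H^1(\Si,\R^3)$ and the six-dimensional manifold $\ZZ=\{Rz_{a,\la}\}$ of adapted bubbles, parametrised by the concentration point $a\in\Si$, the target rotation $R\in SO(3)$ and the scale $\la$. The substance of the argument is then to verify the structural hypotheses of the abstract theorem for the pair $(E,\ZZ)$ — smoothness and controlled geometry of $\ZZ$, smallness of $\ed E$ along $\ZZ$, uniform non-degeneracy of $\ed^2E$ transverse to $\ZZ$, and the requisite Lipschitz control of $\ed E$ and $\ed^2E$ in an $\h$-neighbourhood of $\ZZ$ — after which the three conclusions follow by translating the abstract estimates into the concrete quantities $\la(z)^{-1}$ and $\|\ed E(u)\|_{L^2}$.

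First I would establish the \emph{energy and first-variation estimates along $\ZZ$}. Using the explicit description of the adapted bubbles from Section~\ref{sec:bubble-space} — the degree-one conformal parametrisation of $\Sp^2$, rotated by $R$ and glued into $\Si$ near $a$ at scale $\la^{-1}$ — together with Wente's inequality \cite{We69} and the bubbling computations of Brezis--Coron \cite{BC85}, I would expand $E(z)=\tfrac{4\pi}{3}+O(\la^{-2})$ (up to a logarithm) as $\la\to\infty$, and show that the residual $\ed E(z)$ is concentrated in the gluing region and has size $\|\ed E(z)\|_{L^2}\sim\la^{-1}$ up to a logarithmic factor. The logarithm is forced by the slow, conformally critical decay $\sim(\la|x|)^{-1}$ of the bubble tail on the fixed surface $\Si$, and it is precisely this behaviour that produces the $|\log\|\ed E\||^{1/2}$ and $|\log\|\ed E\||$ corrections in \eqref{est:mainloj1} and the gradient inequality.

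The crucial and hardest step is the \emph{uniform non-degeneracy of the second variation transverse to $\ZZ$}: that $\ed^2E(z)$ restricted to $(T_z\ZZ)^\perp$ is invertible with the norm of its inverse bounded independently of $\la\geq\la_*$. The essential structural point is that, although $\ZZ$ lives on the positive-genus surface $\Si$ where the domain conformal group is unavailable, the target rotations $R\in SO(3)$ are \emph{exact} symmetries of $E$; the six directions spanning $T_z\ZZ$ — the approximate translations $\partial_a z$, the approximate dilation $\partial_\la z$ and the rotations $\partial_R z$ — are therefore all approximate Jacobi fields of $\ed^2E(z)$, and under blow-up at the concentration scale the rotational directions recover precisely those Jacobi fields of the standard bubble (the ``special conformal'' and domain-rotation directions) whose domain realisations do not extend to $\Si$. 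Thus $T_z\ZZ$ blows up onto the full space of Jacobi fields of \eqref{eq:Heq} on $\Sp^2$, which by \cite[Lemma~9.2]{CM05} (see also \cite{T00}) is exactly six-dimensional. I would then argue by contradiction: were transverse non-degeneracy to fail as $\la\to\infty$, a sequence of almost-null directions $v_k\in(T_{z_k}\ZZ)^\perp$ would, after rescaling at scale $\la_k^{-1}$ about $a_k$, converge to a non-trivial Jacobi field on $\Sp^2$ orthogonal to all six symmetry directions, contradicting the model non-degeneracy. The principal difficulties are to control the gluing and curvature errors so that $T_z\ZZ$ really does converge to the full Jacobi space, and to rule out loss of $\dot H^1$-mass through the neck in the blow-up.

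Finally I would assemble the conclusions. With transverse non-degeneracy in hand, the abstract machinery of Theorem~\ref{thm:abstract-main} yields the optimal distance bound \eqref{est:mainloj2}, the exponent $1$ reflecting the absence of genuine Jacobi fields beyond $T\ZZ$. The bubble-scale estimate \eqref{est:mainloj1} then follows by combining this distance bound with the first-variation expansion in the scaling direction, the factor $|\log\|\ed E\||^{1/2}$ being inherited from the logarithmically corrected size of $\ed E$ on $\ZZ$. For the gradient inequality I would split $E(u)-\tfrac{4\pi}{3}=(E(u)-E(z))+(E(z)-\tfrac{4\pi}{3})$ for the nearest bubble $z$: the first difference is $O(\|\ed E(u)\|_{L^2}^2)$ by Taylor expansion, using $\|\ed E(z)\|_{L^2}\lesssim\|\ed E(u)\|_{L^2}$ and \eqref{est:mainloj2}, while the second is controlled by the energy expansion $E(z)-\tfrac{4\pi}{3}\sim\la^{-2}$ (up to a logarithm) together with \eqref{est:mainloj1}. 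I expect the transverse non-degeneracy to be the principal obstacle, as it requires a spectral gap surviving both the concentration limit and the breaking of the domain conformal symmetries on $\Si$.
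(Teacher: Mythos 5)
Your overall strategy coincides with the paper's: both deduce Theorem \ref{thm:main} from the abstract result Theorem \ref{thm:abstract-main} applied to $E$ and the adapted bubble manifold $\ZZ$, and your treatment of the transverse non-degeneracy (blow-up at scale $\la^{-1}$, convergence of $T_z\ZZ$ onto the six-dimensional kernel of $L_\bl$ from \cite[Lemma 9.2]{CM05}, control of mass loss through the neck) matches the paper's proof of Lemma \ref{lem:nondeg}. However, there is a genuine gap at the core of your assembly step: you treat $\ed E(z)$, $z\in\ZZ$, purely as a small residual to be bounded \emph{above}, whereas the paper's mechanism rests on a \emph{lower} bound of definite sign for its component in the scaling direction. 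Concretely, Lemma \ref{lemma:principal-term} gives $\de_\la E(z_{\la,a}) = \tfrac{4\pi}{\la^3}\mathcal{J}(a) + O(\la^{-4})$, and Section \ref{subsec:Bergman} proves, via the Bergman kernel identity and Riemann--Roch on hyperbolic surfaces (and a direct computation on flat tori, see Remark \ref{rmk:Jsign}), that $\mathcal{J}(a)\leq \mathcal{J}_{(\Sigma,g)}<0$ uniformly in $a$; this is precisely where positive genus enters quantitatively, since $\mathcal{J}\equiv 0$ on $\Sp^2$. This lower bound yields hypothesis \eqref{ass:f0} with $f_0(\la)=c_1\la^{-2}$, and it is what produces the bubble-scale estimate \eqref{est:mainloj1}: from $c_1\la^{-3}\leq \abs{\ed E(z)[\de_\la z]}\leq \abs{\ed E(u)[\de_\la z]}+\abs{\ed^2E(z)[\de_\la z,u-z]}+C\norm{u-z}_{\dot H^1}^2\norm{\de_\la z}_{\dot H^1}$ one forces $\la^{-1}$ to be controlled by $\norm{\ed E(u)}_{L^2}$. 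Your phrase ``combining this distance bound with the first-variation expansion in the scaling direction'' silently requires exactly this; without the strict negativity of $\mathcal{J}$ no bound on $\la$ follows, and then \eqref{est:mainloj2} fails as well, since the error term $\norm{\ed E(z)}_{(\VV_z)^*}=O(\la^{-2}(\log\la)^{1/2})$ in the distance estimate can only be converted into $C\norm{\ed E(u)}_{L^2}$ by first invoking \eqref{est:mainloj1}.

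A second, related problem is that the quantity you propose to estimate, $\norm{\ed E(z)}_{L^2}\sim\la^{-1}$, is the wrong input for the abstract theorem. What is needed (Lemma \ref{lemma:key-estimates}) is the dual, $H^{-1}$-type norm of $\ed E(z)$ restricted to $\VV_z$ and of $\ed^2E(z)[\de_\la z,\cdot]$, which are $O(\la^{-2}(\log\la)^{1/2})$ and $O(\la^{-3}(\log\la)^{1/2})$ respectively; the residual is large in $L^2$ but small in duality because it concentrates at scale $\la^{-1}$. If one fed $f_2(\la)\sim\la^{-1}$ into the abstract machinery, the indispensable hypothesis \eqref{ass:quotient}, namely $(f_1f_2+f_2^{\nuu})/f_0\to 0$, would fail, since then $f_2^2/f_0\sim\la^{-2}/\la^{-2}=O(1)$. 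So both the sign information in the $\la$-direction and the finer-than-$L^2$ smallness of $\ed E(z)$ in the orthogonal directions are essential, and neither appears in your outline.
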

We note that, for $u$ satisfying the hypotheses of the above theorem (for suitably chosen $\eps$ and $\la_*$) there always exists a $z\in \ZZ$ minimising $\dist_{\dot H^1}(u,\ZZ)$, and any such $z$ will satisfy $\la(z)\geq \la_*/2$, see Remark \ref{rmk:distance}.

We also have the analogous $H^{-1}$ version of the above which has content for the $H$-functional due to the bubbling theory of Brezis-Coron \cite{BC85} (cf Appendix \ref{sec:PS}).\begin{theorem} \label{thm:Loj-H-1}
Under the hypotheses of Theorem \ref{thm:main} we can furthermore estimate 
$$
\la^{-1}(z)\leq C\norm{\ed E(u)}_{H^{-1}}^\half \text{ and } {\rm{dist}}_{\dot H^1}(u,\mathcal{Z}) \leq C\norm{\ed E(u)}_{H^{-1}}(1+\abs{\log\norm{\ed E(u)}_{H^{-1}}}^{\half})$$
and have a \Loj-type estimate involving the $H^{-1}$ norm of the form 
\beqas 
\left |E(u) -\frac{4\pi}{3} \right|&\leq C\norm{\ed E(u)}_{H^{-1}}.
\eeqas
\end{theorem}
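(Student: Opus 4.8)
The plan is to deduce Theorem \ref{thm:Loj-H-1} directly from Theorem \ref{thm:main} rather than re-running the abstract machinery, exploiting the fact that the three assertions of Theorem \ref{thm:main} are phrased entirely in terms of $\norm{\ed E(u)}_{L^2}$ and that the same hypotheses are in force. The governing observation is an interpolation principle: for almost-critical $u$ close to $\ZZ_{\la_*}$, the $L^2$-norm of the gradient can be controlled by its $H^{-1}$-norm together with a factor measuring the concentration scale $\la(z)$. Concretely, I would first establish a reverse estimate of the schematic form
\beq\label{eq:H-1-plan}
\norm{\ed E(u)}_{L^2(\Si,g)} \leq C \la(z)^{\half}\norm{\ed E(u)}_{H^{-1}},
\eeq
where $z\in\ZZ$ realises $\dist_{\dot H^1}(u,\ZZ)=\norm{u-z}_{\dot H^1}$. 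The heuristic is that the worst-case gradient direction is the one concentrating on the bubble scale $\la(z)^{-1}$, and passing from $H^{-1}$ to $L^2$ on a bump of scale $\la^{-1}$ costs a factor of $\la^{\half}$; this is exactly the kind of scale-dependent gain that powers the argument, and it must be proved uniformly over the $\dot H^1$-neighbourhood using the explicit structure of the adapted bubbles $z_{a,\la}$ from Section \ref{sec:bubble-space}.

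Granting \eqref{eq:H-1-plan}, the three desired inequalities follow by algebra. For the bubble-scale bound, I would substitute \eqref{eq:H-1-plan} into \eqref{est:mainloj1} to obtain $\la^{-1}\lesssim (1+\abs{\log\norm{\ed E}_{L^2}}^{\half})\la^{\half}\norm{\ed E}_{H^{-1}}$, i.e. $\la^{-\fr32}\lesssim (1+\abs{\log(\cdots)}^{\half})\norm{\ed E}_{H^{-1}}$; since $\la\geq\la_*/2$ is large the left side forces $\la^{-1}\lesssim\norm{\ed E}_{H^{-1}}^{\half}$ after absorbing the slowly-varying logarithmic and power factors (one checks the logarithmic term is lower order once $\la$ is reinserted and bootstrapped, replacing $\norm{\ed E}_{L^2}$ by its $H^{-1}$-controlled value). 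For the distance estimate I would feed the resulting scale bound $\la^{\half}\lesssim\norm{\ed E}_{H^{-1}}^{-\half}$ back into \eqref{eq:H-1-plan} and then into \eqref{est:mainloj2}:
\beqs
\dist_{\dot H^1}(u,\ZZ)\leq C\norm{\ed E}_{L^2}\leq C\la^{\half}\norm{\ed E}_{H^{-1}} \leq C\norm{\ed E}_{H^{-1}}\bigl(1+\abs{\log\norm{\ed E}_{H^{-1}}}^{\half}\bigr),
\eeqs
where the final logarithmic factor records the bootstrapped scale. Finally, for the energy \Loj-estimate, inserting \eqref{eq:H-1-plan} into the gradient inequality of Theorem \ref{thm:main} gives $\abs{E(u)-\tfrac{4\pi}{3}}\lesssim(1+\abs{\log\norm{\ed E}_{L^2}})\la\norm{\ed E}_{H^{-1}}^2$, and substituting $\la\lesssim\norm{\ed E}_{H^{-1}}^{-1}$ collapses the right-hand side to $C\norm{\ed E}_{H^{-1}}$, since the $\la$ factor exactly cancels one power of $\norm{\ed E}_{H^{-1}}$ and the logarithm is absorbed.

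The main obstacle is establishing \eqref{eq:H-1-plan} with the sharp power $\la^{\half}$ uniformly in the neighbourhood, since $H^{-1}\to L^2$ is not continuous in general; the estimate can only hold because $\ed E(u)$ lives (up to controlled error) in the span of a few concentrating test directions associated with the bubble $z_{a,\la}$. I would therefore decompose $\ed E(u)$ using the finite-dimensional structure of $T_z\ZZ$ and the non-degeneracy of the second variation in orthogonal directions (the standing hypothesis on $\ZZ$), bounding the component tangent to the near-kernel directions via the explicit bubble profile — where the $\la^{\half}$ factor arises from the scaling $\norm{\na\phi_\la}_{L^2}\sim\la\norm{\phi_\la}_{L^2}$ of a bump at scale $\la^{-1}$ — and controlling the transversal component by coercivity, on which $H^{-1}$ and $L^2$ are comparable up to bounded constants. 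A secondary technical point is verifying that the logarithmic factors genuinely remain lower-order through the two substitutions; this requires checking that after replacing $\norm{\ed E}_{L^2}$ by $\la^{\half}\norm{\ed E}_{H^{-1}}$ inside each logarithm, the monotone rearrangement in $\norm{\ed E}_{H^{-1}}$ produces exactly the stated exponents $\half$ on the logarithm in the distance bound and absorbs it entirely in the energy bound.
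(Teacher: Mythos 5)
Your strategy has a fatal gap: the interpolation inequality $\norm{\ed E(u)}_{L^2} \leq C\la(z)^{1/2}\norm{\ed E(u)}_{H^{-1}}$ on which everything rests is false, and no inequality of this type can hold for general $u$ in an $\dot H^1$-neighbourhood of $\ZZ$. The hypotheses of Theorem \ref{thm:main} constrain $u$ only in $\dot H^1$, so $\ed E(u)$ is merely an element of $H^{-1}$: adding to $z$ a small $\dot H^1$-perturbation $w$ that is not in $H^2$ produces a competitor $u=z+w$ with $\norm{\ed E(u)}_{H^{-1}}<\infty$ but $\norm{\ed E(u)}_{L^2}=\infty$, since the leading term $-\Delta w$ of $\ed E(u)$ is not in $L^2$. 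Closeness to the bubble in $\dot H^1$ gives no control whatsoever on the high-frequency content of $\ed E(u)$, and neither does coercivity of $\ed^2E$ orthogonally to $\ZZ$ (definiteness of a quadratic form does not make the $L^2$ and $H^{-1}$ norms of a functional comparable on any infinite-dimensional subspace). This also shows that the direction of your deduction is backwards: Theorem \ref{thm:Loj-H-1} is designed to have content precisely for maps with $\norm{\ed E(u)}_{L^2}=\infty$ --- e.g.\ general Palais--Smale sequences, for which Brezis--Coron theory gives only $H^{-1}$ decay of $\ed E$ --- and for such maps Theorem \ref{thm:main} is vacuous, so nothing can be extracted from it. As a secondary point, your exponent contradicts your own heuristic: a bump at spatial scale $\la^{-1}$ has $\norm{\cdot}_{L^2}/\norm{\cdot}_{H^{-1}}\sim\la$, not $\la^{1/2}$ (and indeed the paper's two theorems correspond to $\norm{\ed E}_{L^2}\sim\la^{-1}$ and $\norm{\ed E}_{H^{-1}}\sim\la^{-2}$ up to logarithms); with the factor $\la$ your algebra would reproduce the stated exponents, but the inequality still fails for the reasons above.

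The paper's proof needs none of this: Theorem \ref{thm:Loj-H-1} is obtained by running the abstract Theorem \ref{thm:abstract-main} a second time with the auxiliary norm $\norm{\cdot}_*$ chosen to be $\norm{\cdot}_{\dot H^1}$ itself, so that $\norm{\ed E(u)}_*$ is the $H^{-1}$ dual norm and Assumption \ref{ass:D} holds trivially with $f_3\equiv 1$ because $y_z$ is $\dot H^1$-normalised. All other inputs ($f_0,f_1,f_2,f_4$ and $\nu=2$) are exactly as in the proof of Theorem \ref{thm:main}, and the claimed exponents are read off from Remark \ref{rmk:coeff-thm-abstract} using $(\gamma_{1,3},\si_{1,3})=(0,0)$. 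The point of the abstract framework is that the choice of gradient norm enters only through the size $\norm{y_z}_*$ of the single fixed, concentrated test direction, i.e.\ through the pairing $\abs{\ed E(u)[y_z]}$, and never through a comparison of $\norm{\ed E(u)}_{L^2}$ with $\norm{\ed E(u)}_{H^{-1}}$ for general $u$; any attempt to salvage your route would have to retreat to that pairing, which is exactly what the abstract theorem already does.
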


\begin{remark}\label{rmk:althyp}
	Theorem \ref{thm:PS} ensures that if $\Sigma \neq \Sp^2$ then any Palais-Smale sequence $\{u_k\}$ with $E(u_k)\to \frac{4\pi}{3}$ satisfies the hypotheses of Theorem \ref{thm:main} for sufficiently large $k$ (see Remark \ref{rmk:A2}). Since the conclusions of the above theorems are trivially satisfied when $\ed E$ is large in the corresponding norm (see also \eqref{eq:E16}), we can replace the hypothesis that $\dist_{\dot H^1}(u,\ZZ)<\eps$ by 
	\beqs%\label{eq:althyp}
	\left |E(u) -\frac{4\pi}{3} \right|<\tilde{\eps}
\eeqs
for a suitable $\tilde{\eps}>0$. 
\end{remark} 
We will be able to derive our main results on the $H$-surface equation from an abstract result, stated in Theorem \ref{thm:abstract-main},  about \Loj-inequalities in neighbourhoods of suitable manifolds of almost critical points that we prove in Section \ref{s:abstract} using a finite-dimensional reduction argument. 

\

To be able to apply Theorem \ref{thm:abstract-main} it is crucial that the set of adapted bubbles is constructed very carefully. The details of this construction are carried out in Section \ref{sec:bubble-space} but here we briefly describe the main ideas in the 
 case where the domain is a
flat torus. We let  $\bl:\R^2\to \Sp^2$ denote the \emph{standard bubble}, which is the inverse stereographic projection 
\begin{equation}\label{eq:bubblePi}
\bl(x) = \left(\frac{2x}{1+|x|^2} , \frac{1-|x|^2}{1+|x|^2}\right), 
\end{equation}
 with $x = (x_1,x_2)$ and recall \cite[Lemma A.1]{BC85} that the set of solutions of \eqref{eq:Heq} on $\R^2$ with energy $\frac{4\pi}{3}$ is given (upto additive constant) by 
 \beq \label{def:B1}
 \mathcal{B}^1:=\{R \bl_{\lambda,a}(x), a\in \R^2, \lambda >0, R\in SO(3)\} \text{ for } \bl_{\lambda,a}(x):=\bl(\lambda (x-a)).
 \eeq

Bubbling analysis, compare Appendix \ref{sec:PS}, implies that any Palais-Smale sequence $\{u_k\}$ with $E(u_k)\to \frac{4\pi}{3}$ on the torus, viewed as maps on a suitable fundamental domain, shadows 
a sequence of the above bubbles $R_k\bl_{\lambda_k,a_k}$ with $\lambda_k\to \infty$, see Theorem \ref{thm:PS} and Remark \ref{rmk:A2}. 

 In order to compare such a sequence $u_k$ with the corresponding bubble, we want to adapt the bubbles to obtain maps $Rz_{\la,a}$ from the torus. 
To do this we use that 
 for $\lambda |x-a|$ large 
$$
  \bl_{\lambda,a} =  \left(\frac{2}{\lambda} \frac{x-a}{|x-a|^2},-1\right)  + O \left( \frac{1}{\lambda^2 |x-a|^2} \right). 
$$ 
Up to lower-order terms, the first two components of $ \bl_{\lambda,a}$ behave like a multiple of the gradient of the fundamental solution
of the Laplacian 
in $\R^2$ and therefore it is natural to adjust them globally on the 
surface to a suitable multiple of the \emph{gradient of the Green's function} $G$ on $\Si$, which is characterised by
 \begin{equation}\label{eq:Grn}
 -\Dl_p G(p,a) = 2\pi\dl_a - \frac{2\pi}{\Vol_g(\Sigma)}.	
 \end{equation}

This construction is inspired by other geometric problems with loss of compactness such as Liouville's 
equations in two dimensions \cite{BP98}, and the Yamabe equation 
in dimension greater or equal to three  \cite{Schoen84,Rey90}. The use of the 
Green's function is the most convenient from the energetic point of view, since it affects the Dirichlet 
energy much less than a naive cut-off to a constant function on the closed surface.

 In our case this construction, which is carried out in detail in Section \ref{sec:bubble-space}, will yield a six dimensional submanifold of adapted bubbles $\{R z_{\la,a}\}$ which are so that $z_{\la,a}\simeq \bl_{\la,a}(x)$ in suitable isothermal coordinates. One of the crucial properties of this set of adapted bubbles is that we have the energy expansion 
 $$
 E(z_{\lambda,a}) = \frac{4 \pi}{3}- \frac{4\pi}{\lam^2} \mathcal{J}(a) + O(\la^{-3}), \text{ for }  \mathcal{J}(a)=  \lim_{x\to 0} \left((\partial_{y_1}\partial_{x_1}+\partial_{y_2}\partial_{x_2}) G_a(x,y)|_{y=0}\right), 
 $$
where $G_a$ represents the Green's function in suitable local isothermal coordinates centred at $a$, 
see Lemma \ref{lemma:principal-term}.   This expansion is similar  to those appearing in \cite{T00,CM05} 
on Euclidean domains, however we are able here to fully characterise the sign of $\mathcal{J}$,
which is a key feature. While $\mathcal{J}\equiv 0$ on $\Sp^2$, we can show that $\mathcal{J}$ is strictly negative on all surfaces of positive genus.
For flat tori of unit area one finds that $\mathcal{J}(a) = -2\pi$ for all $a$ which follows from a simple calculation carried out at the beginning of Section \ref{subsec:Bergman}. For higher genus surfaces $(\Sigma,g)$ equipped with a hyperbolic metric we shall prove in Section \ref{subsec:Bergman} (see Remark \ref{rmk:Jsign}) that 
$$\mathcal{J}(a)=-8\pi \sum_{j} |\phi_j(a)|_g^2$$whenever $\{\phi_j\}$ is an $L^2$-orthonormal basis of holomorphic one forms. A consequence of Riemann-Roch is that there is no point on $\Sigma$ where all holomorphic 
differentials vanish simultaneously, see for example Corollary 11 in \cite{Bobenko}. Thus we may conclude that $\sup_a\mathcal{J}(a) = \mathcal{J}_{(\Sigma,g)} <0$ for any surface of positive genus. For the success of our method it is crucial that $\mathcal{J}$ has a definite sign, as this forces the energy decay in the $\la$-direction to dominate all other variations appearing in our dimension-reduction.   

The approach utilised in this paper has since been generalised to the study of low-energy harmonic maps from closed surfaces into closed target manifolds by the second author \cite{Rup21, Rup23}, where in particular energy gap and \Loj-type estimates are obtained in this more complicated setting, with applications to convergence rates of the harmonic map heat flow into analytic manifolds.

\

This paper is outlined as follows. We begin by developing a framework to prove suitable \Loj-inequalities near manifolds of almost critical points of functionals on Hilbert spaces whose second variations is definite on the manifold in orthogonal directions. The rest of the paper is concerned with almost critical points of the $H$-energy. In Section \ref{sec:main-proofs} we carefully construct the manifold of adapted bubbles on higher-genus surfaces and state the required results concerning definiteness and asymptotic behaviour of the energy $E$ in order to apply our abstract results and conclude the proofs of our main Theorems \ref{t:gap} and \ref{thm:main}. The proof of these properties is carried out in Sections \ref{sec:proofs-of-lemmas} and \ref{subsec:Bergman} and we recall the required bubbling analysis of Palais-Smale sequences in the Appendix. In Section \ref{s:Wente} we exploit the relationship between the $H$-energy and the classical Wente inequality to obtain \Loj-inequalities for functions with near-optimal Wente energy on surfaces with positive genus.

\

\begin{center}
Acknowledgements	
\end{center}

The authors are grateful to Marco Bertola, Francesco Bonsante, Andrea di Lorenzo and Angelo Vistoli 
for useful comments and references on Algebraic Geometry. A.M. has been partially supported by the project {\em Geometric problems with loss of compactness}  from Scuola Normale Superiore and by MIUR Bando PRIN 2015 2015KB9WPT$_{001}$ and is also a member of
GNAMPA as part of INdAM.

%-------------------------ABSTRACT-----------------------------------
%-------------------------ABSTRACT-----------------------------------
%-------------------------ABSTRACT-----------------------------------

\section{An abstract result}\label{s:abstract}

In this section we prove an abstract result concerning 
\Loj-inequalities for elements that are close to a set $\ZZ$ of approximate critical points for functionals on a general Hilbert-space, in settings that occur if the underlying singularity models form a non-degenerate manifold of critical points. 
We remark that there is an extensive literature on \Loj-inequalities for maps  near \textit{critical points} of functionals on  Hilbert spaces, beginning with the seminal work of Simon \cite{Simon}, and more generally on problems that can be studied via  finite-dimensional reductions see e.g. \cite{AM06}, the references therein and the discussion below.

Let $\mathcal{H}$ be a Hilbert-space with inner product $\lan\cdot,\cdot\ra_\HH$,
let $\ZZ$ be a finite dimensional submanifold of $\HH$, let $\lambda:\ZZ\to (0,\infty)$ be a continuous function and denote by $\ZZ_{\la_0}$ the set of elements of $\ZZ$  with $\la(z)\geq \la_0$. We furthermore let $\VV_z$, $z \in \mathcal{Z}$,
be the orthogonal complement 
in $\mathcal{H}$ of $T_z \mathcal{Z}$, and denote by $P^{\VV_z}$ the orthogonal projection from $\mathcal{H}$ 
to $\mathcal{V}_z$.

We consider an energy functional
$\mathcal{I} : \mathcal{H} \to \R$ of class $C^2$ and want to identify conditions on the behaviour of $\II$ on $\ZZ$ that are sufficient to establish \Loj-type inequalities on a suitable neighbourhood of $\ZZ_{\la_0}$ for some $\lam_0>1$. To begin with we ask
\begin{ass}\label{ass:A}
	The second variation $\ed^2 \II(z)$, $z\in \ZZ_{\lambda_0}$, is uniformly definite orthogonally to $\ZZ$ in the sense that 
		there exists $c_0>0$ so that for every $z\in\ZZ_{\lambda_0}$ we can split 
	 $\VV_z=\VV_z^+\oplus \VV_z^-$ orthogonally 
	 so that for any $w^\pm\in \VV_z^\pm$ we have 
$\ed^2 \II(w^+,w^-)=0$
while 
$$\pm \ed^2 \II(z)[w^\pm,w^\pm]\geq c_0 \norm{w^\pm}_\h^2.$$
\end{ass}

\begin{remark}\label{rmk:equiv-ev-gap}	
For many energies, including the $H$-energy considered in this paper, this condition can alternatively be formulated in terms of spectral properties of  the Jacobi operator $L_z$, $z\in\ZZ$,  characterised by 
 $$\ed^2 \II(z)[v,w]=\langle L_z(v),w\rangle_\h \text{ for all } v,w\in \h.$$
 Namely if the 	
 projected Jacobi operator $\hat L_z:=P^{\VV_z}\circ L_z$
   admits an orthonormal eigenbasis, as we shall see is the case for the $H$-energy, then the above assumption is equivalent to a uniform eigenvalue gap near zero with the splitting simply given by the orthogonal splitting of $\VV_z$ into the subspaces $\VV_z^\pm$ spanned by the eigenfunctions with positive respectively negative eigenvalues.
 \end{remark}

In addition to this non-degeneracy we require the following conditions on the behaviour of $\II$ and its first and second variation at points of the manifold $\ZZ_{\la_0}$

\begin{ass} \label{ass:B}
There are non-increasing functions $f_{0,1,2}:[\lambda_0,\infty)\to (0,\infty)$ and a number $\nuu\geq 1$ so that 
\beq \label{ass:quotient}
\frac{f_1(\lambda)f_2(\lambda)+f_2(\la)^{\nuu}}{f_0(\lambda)} \to 0 \text{ as } \lambda \to \infty
\eeq
for which the following holds.
	For each $z\in \ZZO$ there exists $y_z\in \HH$, normalised to $\norm{y_z}_\HH=1$, so that
\beq
\label{ass:f0}
\abs{\ed \II(z)(y_z)}\geq f_0(\lambda(z))
\eeq
while
\beq \label{ass:f1}
\norm{(\ed^2 \II)(z)(y_z,\cdot )}_{(\VV_z)^*}\leq f_1(\lambda(z))
\eeq
and 
\beq\label{ass:f2}
\norm{\ed \II(z)}_{(\VV_z)^*}\leq f_2(\lambda(z))\eeq
and so that for all $w\in \VV_z$
\beq 
\label{ass:nuu} 
\babs{\left(\ed^2 \II(z+tw)-\ed^2 \II(z)\right)[y_z,w]}\leq C \norm{w}^{\nuu}_\HH, \quad \text{ for all } t\in [0,1].
\eeq
\end{ass}

We stress that the above assumptions only need to be satisfied for elements of $\ZZ$ and the specific element $y_z$, and note that in applications
the validity  of \eqref{ass:quotient}-\eqref{ass:nuu} is very sensitive to the construction of $\ZZ$ and choice of $y_z$. In contrast, at general points in a neighbourhood of $\ZZ_{\la_0}$ we require only the following basic properties of the energy. 

\begin{ass}
\label{ass:C}
 There exist $\delta>0$, $C\in \R$ and a modulus of continuity $\omega(\cdot)$ 
near zero such that for all  $u_{1,2}\in \HH$ with $\dist_\HH(u_{1,2},\ZZO)<\delta
$ we have 
\beq \label{ass:general-dI}
\|  \ed^2 \II(u_1)  \| \leq C\eeq
and 
\beq\label{ass:generald2I}
 \|  \ed^2 \II(u_1)  - \ed^2 \II(u_2)  \| \leq 
\omega(\|u_1-u_2\|_\h).\eeq 
\end{ass}

For the $H$-energy we will see that assumptions \eqref{ass:nuu}-\eqref{ass:generald2I} are trivially satisfied with $\nu=2$ and $\om(t)=Ct$, see \eqref{est:D2E-H} below.

\

Our first abstract result excludes the possibility that \textit{critical points} of the energy $\II$ approach the manifold $\ZZ$ and would be sufficient to establish Theorem \ref{t:gap} on the existence of an energy gap for critical points of the $H$-energy.

\begin{theorem}\label{thm:abstract-critical-points}
Let $\II:\HH\to \R$ be a $C^2$ functional of a Hilbert space $(\HH,\langle\cdot,\cdot\rangle_\HH)$ for which Assumptions \ref{ass:A}, \ref{ass:B} and \ref{ass:C} hold true for some finite dimensional submanifold $\ZZ$ and a continuous $\lambda:\ZZ\to (0,\infty)$. Then there cannot be any sequence of critical points $u_k$ of $\II$ for which there are $z_k\in\ZZ$ with $\lambda(z_k)\to \infty$ and 
$\dist_{\h}(u_k,\ZZ)=\norm{z_k-u_k}_{\h}\to 0$.
\end{theorem}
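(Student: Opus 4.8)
The plan is to argue by contradiction, playing off the lower bound \eqref{ass:f0} on $\ed \II(z_k)$ in the distinguished direction $y_{z_k}$ against the fact that criticality forces $\ed\II(u_k)$ to vanish, so that the change in the first variation between $z_k$ and $u_k$ would have to be unexpectedly large. Suppose such a sequence $u_k$ exists; write $z_k\in\ZZ$ for a nearest point, $w_k:=u_k-z_k$ and $y_k:=y_{z_k}$. Since $z_k$ realises the distance to the $C^1$ manifold $\ZZ$, the first-order optimality condition gives $w_k\perp T_{z_k}\ZZ$, that is $w_k\in\VV_{z_k}$, while by hypothesis $\|w_k\|_\h\to 0$ and $\lambda(z_k)\to\infty$. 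In particular $z_k\in\ZZ_{\lambda_0}$ and the whole segment $z_k+tw_k$, $t\in[0,1]$, lies within $\delta$ of $\ZZ_{\lambda_0}$ once $k$ is large, so that Assumptions \ref{ass:A}, \ref{ass:B} and \ref{ass:C} are all available along it.

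The first step is to control $\|w_k\|_\h$. Testing $\ed\II(u_k)=0$ against any $v\in\VV_{z_k}$ and applying the fundamental theorem of calculus to $t\mapsto\ed\II(z_k+tw_k)(v)$ gives
\[
\ed\II(z_k)(v)=-\int_0^1 \ed^2\II(z_k+tw_k)[w_k,v]\,dt .
\]
The key idea, which sidesteps the indefiniteness permitted by Assumption \ref{ass:A}, is to test with $v=\bar v_k:=w_k^+-w_k^-$, where $w_k=w_k^++w_k^-$ is the splitting of Assumption \ref{ass:A}. Since the cross terms $\ed^2\II(z_k)[w^+,w^-]$ vanish, this choice converts the indefinite form into a coercive one,
\[
\ed^2\II(z_k)[\bar v_k,w_k]=\ed^2\II(z_k)[w_k^+,w_k^+]-\ed^2\II(z_k)[w_k^-,w_k^-]\geq c_0\|w_k\|_\h^2 ,
\]
while $\|\bar v_k\|_\h=\|w_k\|_\h$. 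Splitting $\ed^2\II(z_k)$ off from the integral, bounding the remainder by the modulus of continuity \eqref{ass:generald2I}, and estimating the left-hand side by \eqref{ass:f2} (using $\bar v_k\in\VV_{z_k}$), I obtain
\[
c_0\|w_k\|_\h^2\leq f_2(\lambda(z_k))\|w_k\|_\h+\omega(\|w_k\|_\h)\|w_k\|_\h^2 ,
\]
so that, absorbing the last term for $k$ large (as $\omega(\|w_k\|_\h)\to 0$), $\|w_k\|_\h\leq \tfrac{2}{c_0}f_2(\lambda(z_k))$.

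The second step tests the same equation against $y_k$: criticality gives
\[
0=\ed\II(z_k)(y_k)+\int_0^1 \ed^2\II(z_k+tw_k)[w_k,y_k]\,dt .
\]
Replacing $\ed^2\II(z_k+tw_k)$ by $\ed^2\II(z_k)$ at the cost of the Hölder remainder \eqref{ass:nuu} (valid since $w_k\in\VV_{z_k}$), and bounding the principal term by \eqref{ass:f1} (again using $w_k\in\VV_{z_k}$), yields together with \eqref{ass:f0}
\[
f_0(\lambda(z_k))\leq|\ed\II(z_k)(y_k)|\leq f_1(\lambda(z_k))\|w_k\|_\h+C\|w_k\|_\h^{\nu} .
\]
Inserting the bound on $\|w_k\|_\h$ from the first step and dividing by $f_0(\lambda(z_k))$ produces
\[
1\leq \frac{2}{c_0}\,\frac{f_1(\lambda(z_k))f_2(\lambda(z_k))}{f_0(\lambda(z_k))}+C'\,\frac{f_2(\lambda(z_k))^{\nu}}{f_0(\lambda(z_k))},
\]
whose right-hand side tends to $0$ as $\lambda(z_k)\to\infty$ by the quotient hypothesis \eqref{ass:quotient} (each summand is nonnegative and their sum is controlled). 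This contradiction proves the claim.

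I expect the main obstacle to be the first step, namely extracting a quantitative bound on $\|w_k\|_\h$ from criticality even though $\ed^2\II(z_k)$ is only \emph{indefinite} on $\VV_{z_k}$; the resolution is the test direction $\bar v_k=w_k^+-w_k^-$, which uses precisely the orthogonal splitting and vanishing of cross terms from Assumption \ref{ass:A}. A secondary technical point needing care is checking that $z_k+tw_k$ remains in the $\delta$-neighbourhood of $\ZZ_{\lambda_0}$ where Assumption \ref{ass:C} applies, and that $\omega(\|w_k\|_\h)$ can be absorbed into the coercivity constant for $k$ large.
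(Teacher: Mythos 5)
Your proof is correct and follows essentially the same route as the paper: your Step 1 is exactly the paper's Lemma \ref{lemma:abstract1} (same test direction $w_k^+-w_k^-$ exploiting the vanishing cross terms of Assumption \ref{ass:A}, combined with \eqref{ass:f2} and the criticality of $u_k$), and your Step 2 is the paper's Lemma \ref{lemma:abstract2} (testing with $y_{z_k}$ and using \eqref{ass:f0}, \eqref{ass:f1}, \eqref{ass:nuu}), with the contradiction drawn from \eqref{ass:quotient} just as in the paper. The only cosmetic difference is that the paper states these two steps as quantitative lemmas for general (not necessarily critical) $u$ and then specialises, whereas you insert $\ed\II(u_k)=0$ from the start.
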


Dimension reduction techniques have been used successfully to prove several results about the structure of sets of critical points, see e.g. \cite{AM06} and the reference therein.

One of the key ideas in these results is that if the second variation 
of $\II$ is invertible orthogonally to a given set of approximate critical points $\ZZ$ then  
moving orthogonally to $\ZZ$ one can control the projection of the gradient onto the normal space, also called the \textit{auxiliary equation}. If it is also possible, e.g. by variational or topological means, to solve the so called \textit{bifurcation equation}, which describes the tangential component of the gradient, one can then obtain the existence of critical points close to the given set of almost critical points. 

While the auxiliary equation is always solvable, in our settings the bifurcation equation is strictly not solvable in an asymptotic sense, which could be used to establish the above result.

Since our goal is to not only exclude the existence of critical points, but to also obtain quantitative information for all maps in a neighbourhood of $\ZZ$, we need a different approach which will at the same time also give the proof of the above result as a by product.  

\

When proving \Loj -inequalities it is common that one wants to bound quantities using the gradient of the functional with respect to a norm which is different from that of $\HH$. For geometric energies such the $H$-energy the underlying space consists of $H^1$ functions but in applications it might be more useful to have a \Loj -inequality that involves the $L^2$-gradient rather than the $H^{-1}$ norm of $\ed E$. 

We hence introduce a further norm $\norm{\cdot}_{*}$ on $\HH$  and 
ask that additionally 

\begin{ass}\label{ass:D}
There exists a non-increasing function $f_3:[\la_0,\infty)\to \R$ so that for every $z\in \ZZ_{\la_0}$ the element $y_z$ from Assumption \ref{ass:B} satisfies
\beqs %\label{ass:f4}
\norm{y_z}_{*}\leq f_3(\la(z))\eeqs 
with respect to a norm $\norm{\cdot}_{*}$ on $\HH$ which is furthermore so that there exists some $C\in\R$ so that 
$$\norm{v}_{*}\leq C\norm{v}_{\HH}\text{ for every } v\in \HH.$$
\end{ass}

These assumptions turn out to be sufficient to bound the distance of a map $u$ to $\ZZ$ in terms of  $\norm{\ed \II(u)}_*$, where here and in the following we abuse notation by writing both $\norm{\cdot}_{*}$ for the additional norm on $\HH$ as well as for the norm on the dual space of $(\HH,\norm{\cdot}_*)$.  

Finally, we want to prove a gradient \Loj-inequality, where we compare the energy of a map $u$ with the energy of a nearby critical point. As the elements of $\ZZ$ are not critical point themselves, we do not just want to bound the difference of $\II(u)$ and $\II(z)$, but instead want to compare 
$\II(u)$ with the energy $\II^*$ of the underlying singularity model or bubble tree.  We hence furthermore need to know how the energy of elements of $\ZZ$ compares to the energy of the underlying critical point and thus assume:
\begin{ass}\label{ass:E}
There exists $\II^*\in \R$ and a non-increasing function $f_4:[\la_0,\infty)\to \R$ so that for all $z\in \ZZ_{\la_0}$
$$\abs{\II(z)-\II^*}\leq f_4(\la(z)).$$
\end{ass}

To state our main result we will restrict our attention to functions $f_{i}$ with decay 
\beq\label{ass:polyn-f}
	f_0(\la)\geq c_1\la^{-\gamma_0}(\log \la)^{-\si_0}
	 \text{ while } \quad f_i(\la)\leq C\la^{-\gamma_i}(\log \la)^{-\si_i}%
	\text{ for } i=1,\ldots, 4
	\eeq
for some $c_1>0$, $C\in \R$ and numbers $\gamma_{i}\geq 0$ as well as real numbers $\si_{i}$, which for $i\neq 1,3$ are asked to be positive if the corresponding $\gamma_{i}=0$. 

\begin{remark}
We note that the explicit decay in \eqref{ass:polyn-f} is needed only to make the following easier to read and that the presented argument can be used also to derive similar results in more general settings.	
\end{remark}

Our main result in the abstract setting of functionals on Hilbert-spaces can then be formulated as follows. 
\begin{theorem}\label{thm:abstract-main}
Let $\II:\HH\to \R$ be a $C^2$ functional on a Hilbert space $(\HH,\langle\cdot,\cdot\rangle_\HH)$ for which Assumptions \ref{ass:A}-\ref{ass:E} hold true for some finite dimensional submanifold $\ZZ$, a continuous $\lambda:\ZZ\to (0,\infty)$ and for functions $f_{i}$ as in 
\eqref{ass:polyn-f} for numbers $\gamma_i\geq 0$ and $\si_i$ as described above. 

Then there exists $\eps_1,c,C>0$ and $\la_*\ge \la_0$ so that for every $u\in \HH$ with 
$$\norm{u-z}_\h=\dist_\h(u,\ZZ)<\eps_1 \text{ for some } z\in \ZZ_{\la_*}$$ 
and $\norm{\ed \II(u)}_{*}\leq \half$ the following holds true: 
\begin{enumerate}
\item 
In the case that $\gamma_0>0$ we can bound
\begin{eqnarray}
\label{claim:la-thm-abstract}
\la(z)^{-1}&\leq &C \norm{\ed \II(u)}_{*}^{\al_1}\abs{\log\norm{\ed \II(u)}_{*}}^{\beta_1}\\
\label{claim:Loj1-thm-abstract-general}
\norm{u-z}_{\HH}&\leq &C\norm{\ed \II(u)}_{*}^{\al_2}\abs{\log\norm{\ed \II(u)}_{*}}^{\beta_2} \\
\label{claim:Loj2-thm-abstract}
\abs{\II(u)-\II^*}&\leq &C\norm{\ed \II(u)}_{*}^{\al_3}\abs{\log\norm{\ed \II(u)}_{*}}^{\beta_3} 
\end{eqnarray}
for exponents $\al_i> 0$ and $\beta_i\in \R$ defined in Remark \ref{rmk:coeff-thm-abstract} below.
\item If instead $\gamma_0=0$ then we have 
\begin{eqnarray}
 \label{claim:la-thm-abstract-exp}
\la(z)^{-1}&\leq& \exp(-c\norm{\ed \II(u)}_{*}^{-\eta_1})\\
\label{claim:Loj1-thm-abstract-exp}
\norm{u-z}_{\HH}&\leq &C\norm{\ed \II(u)}_{*}^{\eta_2}\\
\label{claim:Loj2-thm-abstract-exp}
\abs{\II(u)-\II^*}_{\HH}&\leq &C\norm{\ed \II(u)}_{*}^{\eta_3}
\end{eqnarray}
for exponents $\eta_{1,2,3} >0$ defined in Remark \ref{rmk:coeff-thm-abstract-exp} below.
\end{enumerate}
\end{theorem}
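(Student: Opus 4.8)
The plan is to prove Theorem~\ref{thm:abstract-main} by a finite-dimensional (Lyapunov--Schmidt) reduction that decouples $u$ into a component normal to $\ZZ$, governed by the non-degeneracy in Assumption~\ref{ass:A}, and a single distinguished direction $y_z$, governed by Assumption~\ref{ass:B}. Given $u$ with $\norm{u-z}_\h=\dist_\h(u,\ZZ)<\eps_1$ and $z\in\ZZ_{\la_*}$, set $w:=u-z$. Since $z$ realises the distance to $\ZZ$, the first-order optimality condition forces $w\in\VV_z$ (i.e. $w\perp T_z\ZZ$), with $\norm{w}_\h<\eps_1$. I would then prove the three conclusions in order: first an \emph{auxiliary estimate} bounding $\norm{w}_\h$, then the \emph{bifurcation estimate} bounding $\la(z)^{-1}$, which is the heart of the argument, and finally the energy comparison.

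For the auxiliary estimate I would expand $\ed\II(u)=\ed\II(z+w)$ and project onto $\VV_z$. Writing $\hat L_z=P^{\VV_z}\circ L_z$ for the projected Jacobi operator, Assumption~\ref{ass:A} makes $\hat L_z$ invertible on $\VV_z$ with $\norm{\hat L_z^{-1}}\le c_0^{-1}$ (the mutually invariant subspaces $\VV_z^\pm$ give $\norm{\hat L_z w}_{(\VV_z)^*}\ge c_0\norm{w}_\h$). From
\[
\hat L_z w=P^{\VV_z}\ed\II(u)-P^{\VV_z}\ed\II(z)-P^{\VV_z}\!\int_0^1\big(\ed^2\II(z+tw)-\ed^2\II(z)\big)[w,\cdot\,]\,\ed t,
\]
together with $\norm{P^{\VV_z}\ed\II(z)}_{(\VV_z)^*}\le f_2(\la(z))$ from \eqref{ass:f2}, the modulus-of-continuity control \eqref{ass:generald2I} on the remainder, and $\norm{\ed\II(u)}_{\h^*}\le C\norm{\ed\II(u)}_*$, I obtain after shrinking $\eps_1$ to absorb the superlinear remainder
\[
\norm{w}_\h\le C\big(\norm{\ed\II(u)}_*+f_2(\la(z))\big).
\]

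The crucial step tests $\ed\II(u)=\ed\II(z+w)$ against $y_z$. A first-order expansion bounds $\ed^2\II(z)[y_z,w]$ by $f_1(\la)\norm{w}_\h$ via \eqref{ass:f1} (recall $w\in\VV_z$), the remainder by $C\norm{w}_\h^{\nuu}$ via \eqref{ass:nuu}, and the left-hand side by $\norm{\ed\II(u)}_*\norm{y_z}_*\le f_3(\la)\norm{\ed\II(u)}_*$ via Assumption~\ref{ass:D}; with the lower bound $\abs{\ed\II(z)(y_z)}\ge f_0(\la)$ from \eqref{ass:f0} and the auxiliary estimate this yields
\[
f_0(\la(z))\le C\norm{\ed\II(u)}_*\big(f_1(\la)+f_3(\la)\big)+C\norm{\ed\II(u)}_*^{\nuu}+C\big(f_1(\la)f_2(\la)+f_2(\la)^{\nuu}\big).
\]
Here Assumption~\ref{ass:B} enters decisively: the quotient condition \eqref{ass:quotient} lets me fix $\la_*\ge\la_0$ so large that the last bracket is $\le\half f_0(\la)$ for all $\la\ge\la_*$ and may be absorbed on the left. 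Since $f_1,f_3$ are non-increasing, hence bounded, and $\norm{\ed\II(u)}_*\le\half$ with $\nuu\ge1$, what survives is $f_0(\la(z))\le C\norm{\ed\II(u)}_*$; inserting the polynomial decay \eqref{ass:polyn-f} and inverting gives the power law \eqref{claim:la-thm-abstract} (with $\al_1=1/\gamma_0$, $\be_1=\si_0/\gamma_0$) when $\gamma_0>0$, and the exponential bound \eqref{claim:la-thm-abstract-exp} (with $\eta_1=1/\si_0$) when $\gamma_0=0$. I expect this absorption-and-inversion, together with the bookkeeping that produces the precise exponents of Remark~\ref{rmk:coeff-thm-abstract}, to be the main obstacle; crucially, $\la_*$ is fixed from \eqref{ass:quotient} \emph{before} $\eps_1$, so there is no circularity.

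Finally I would deduce the remaining estimates. Substituting the $\la(z)^{-1}$ bound into $f_2(\la(z))$ in the auxiliary estimate turns $\norm{w}_\h\le C(\norm{\ed\II(u)}_*+f_2(\la(z)))$ into the distance bounds \eqref{claim:Loj1-thm-abstract-general} and \eqref{claim:Loj1-thm-abstract-exp}. For the energy, integrating by parts in
\[
\II(u)-\II(z)=\int_0^1\ed\II(z+tw)(w)\,\ed t=\ed\II(u)(w)-\int_0^1 t\,\ed^2\II(z+tw)[w,w]\,\ed t
\]
and estimating the two terms by $C\norm{\ed\II(u)}_*\norm{w}_\h$ and $C\norm{w}_\h^2$ (Assumption~\ref{ass:C}) gives $\abs{\II(u)-\II(z)}\le C(\norm{\ed\II(u)}_*^2+f_2(\la)^2)$ after inserting the auxiliary estimate; adding $\abs{\II(z)-\II^*}\le f_4(\la(z))$ from Assumption~\ref{ass:E} and substituting the $\la$-bound yields \eqref{claim:Loj2-thm-abstract} and \eqref{claim:Loj2-thm-abstract-exp}. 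The same bifurcation estimate with $\ed\II(u)=0$ forces $f_0(\la)\le\half f_0(\la)$, impossible for $\la\ge\la_*$, so no sequence of genuine critical points can approach $\ZZ$ with $\la\to\infty$; this is Theorem~\ref{thm:abstract-critical-points} as a by-product.
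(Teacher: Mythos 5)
Your architecture is exactly the paper's: orthogonality of $w=u-z$ to $T_z\ZZ$, an auxiliary estimate from the definiteness of $\ed^2\II(z)$ on $\VV_z$ (the paper's Lemma \ref{lemma:abstract1}), a bifurcation estimate obtained by testing against $y_z$ and absorbing $C(f_1f_2+f_2^{\nu})$ via \eqref{ass:quotient} (Lemma \ref{lemma:abstract2} combined with Assumption \ref{ass:D}), and the same energy comparison at the end, with Theorem \ref{thm:abstract-critical-points} falling out as a by-product. The gap is in your inversion step. Having arrived at
$$f_0(\la)\le C\norm{\ed\II(u)}_*\big(f_1(\la)+f_3(\la)\big)+C\norm{\ed\II(u)}_*^{\nu},$$
you bound $f_1+f_3$ by a constant and $\norm{\ed\II(u)}_*^{\nu}$ by $\norm{\ed\II(u)}_*$, concluding $f_0(\la)\le C\norm{\ed\II(u)}_*$ and hence $\al_1=1/\gamma_0$, $\be_1=\si_0/\gamma_0$ (and $\eta_1=1/\si_0$ when $\gamma_0=0$). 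This discards exactly the structure that produces the exponents the theorem asserts: Remark \ref{rmk:coeff-thm-abstract} requires $\tfrac{1}{\al_1}=\max\{\gamma_0-\gamma_{1,3},\,\gamma_0/\nu\}$, which is \emph{strictly} smaller than $\gamma_0$ --- i.e.\ $\al_1$ strictly larger, a strictly stronger bound since $\norm{\ed\II(u)}_*\le\thalf$ --- whenever both $\gamma_{1,3}>0$ and $\nu>1$. The paper instead keeps the $\la$-decay of $f_1+f_3$ and the full $\nu$-th power, rewriting the display as
$$1\le C\la^{\gamma_0-\gamma_{1,3}}(\log\la)^{\si_0-\si_{1,3}}\norm{\ed\II(u)}_*+C\big(\la^{\gamma_0/\nu}(\log\la)^{\si_0/\nu}\norm{\ed\II(u)}_*\big)^{\nu},$$
and inverts the maximum of the two $\la$-powers; this is precisely where $\FF_\infty$ enters. (You also skip the case analysis on the sign of $\be_1$ needed to trade $(\log\la)^{\be_1}$ for $\abs{\log\norm{\ed\II(u)}_*}^{\be_1}$, but that is minor by comparison.)

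The loss is not cosmetic: it breaks the application this theorem exists for. For the $H$-energy with $\norm{\cdot}_*=\norm{\cdot}_{L^2}$ one has $\gamma_0=2$, $\nu=2$ and $(\gamma_{1,3},\si_{1,3})=(1,-\thalf)$, so the theorem gives $\al_1=1$, $\be_1=\thalf$, which is what \eqref{est:mainloj1} of Theorem \ref{thm:main} needs; your version yields only $\la(z)^{-1}\le C\norm{\ed E(u)}_{L^2}^{1/2}$. The error then propagates: $(\al_2,\be_2)$ and $(\al_3,\be_3)$ are computed from $\al_1$ through \eqref{est:fi-proof-abstract}, so your distance bound would carry an extra factor $\abs{\log\norm{\ed E(u)}_{L^2}}^{1/2}$ where \eqref{est:mainloj2} has none, and the final gradient \Loj-inequality would weaken accordingly. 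The remedy is simply not to throw anything away: substitute the polynomial forms \eqref{ass:polyn-f} into the bifurcation estimate \emph{before} crude bounding, and solve the resulting inequality for $\la$; the rest of your argument then goes through unchanged and reproduces the paper's proof.
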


To define the above exponents 
we will write  $\FF_\infty\big((a_1,b_1), (a_2,b_2)\big)$ for the `optimal' pair of exponents $(\tilde a,\tilde b)$ 
satisfying 
$$\la^{a_1}(\log \la)^{b_1}+\la^{a_2}(\log \la)^{b_2}\leq C \la^{\tilde a}(\log \la)^{\tilde b} 
\quad\text{ for all sufficiently large } \la,$$
i.e. for $\tilde a:=\max\{a_1,a_2\}$ and for $\tilde b$ given by $\tilde b=b_1$ if either $a_1>a_2$ or if $a_1=a_2$ and $b_1\geq b_2$ respectively $\tilde b=b_2$. We also let $\FF_0\big((a_1,b_1), (a_2,b_2)\big)$ be the exponents that are characterised by the analogue property for small positive numbers.

As we shall see the exponents $(\al_i,\be_i)$ depend on an upper bound on $f_1+f_3$ rather than on the individual bounds on $f_1$ and $f_3$ so we set 
$(\gamma_{1,3},\si_{1,3}):=-\FF_0\big(-(\gamma_1,\si_1), 
-(\gamma_3,\si_3))$, i.e. choose the optimal exponents so that $f_1+f_3\leq C \la^{-\gamma_{1,3}}(\log \la)^{-\si_{1,3}}$.

 The exponents for which Theorem \ref{thm:abstract-main} is valid can then be computed as follows
\begin{remark}\label{rmk:coeff-thm-abstract}
If $\gamma_0>0$ the exponents in the above theorem are characterised by 
\beqa\label{def:gamma-sigma-thm}
\big(\tfrac{1}{\al_1}, \tfrac{\beta_1}{\alpha_1}\big)&:= \FF_\infty\big((\gamma_0-\gamma_{1,3}, \si_0-\si_{1,3}), (\tfrac{\gamma_0}{\nuu}, \tfrac{\si_0}{\nuu})\big),\\
\big(\al_2,\beta_2)&:= \FF_0\big((1,0), (\gamma_2\al_1, \gamma_2\beta_1-\si_2)\big),\\
\big(\al_3,\be_3)&:=\FF_0\big((2\al_2,2\be_2), (\gamma_4\al_1,\gamma_2\be_1-\si_4)\big).
\eeqa 
\end{remark}

\begin{remark}\label{rmk:coeff-thm-abstract-exp}
For $\gamma_0=0$ the above theorem holds true for exponents $\eta_i>0$ chosen as 
\begin{itemize}
\item $\eta_1=\frac{\nuu}{\si_0}$ if $\gamma_{1,3}>0$ 
 while $\eta_1=\big(\max\{\si_0-\si_{1,3}, \frac{\si_0}{\nuu} \}\big)^{-1}$ otherwise, 
\item $\eta_1=1$ if $\gamma_2>0$ while $\eta_2=\min\{1, \si_2\eta_1\}$ otherwise,
\item  $\eta_3=2\eta_1$ if $\gamma_4>0$ while 
$\eta_3=\min\{2\eta_2,\si_4\eta_1\}$ otherwise.
\end{itemize}

\end{remark}

\subsection{Proofs of the abstract results}
We first use the definiteness of the second variation in orthogonal directions to show
\begin{lemma}\label{lemma:abstract1}
Suppose that Assumptions \ref{ass:A} and \ref{ass:C} are satisfied and let  $\eps_1>0$ be so that $\omega(\eps_1)\leq \half c_0^2$. Then for all maps $u\in \HH$ which are so that there exists $z\in \ZZ_{\lambda_0}$  with 
$$\norm{u-z}_{\HH}=\dist_{\HH}(u,\ZZ)<\eps_1 $$  we can bound 
\beqs
\norm{u-z}_{\HH}\leq C \norm{\ed \II(u)}_{(\VV_z)^*}+C\norm{\ed \II(z)}_{(\VV_z)^*}.
\eeqs
\end{lemma}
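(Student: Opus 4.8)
The plan is to combine the geometric fact that $u-z$ is orthogonal to $\ZZ$ with the non-degeneracy of $\ed^2\II(z)$ in orthogonal directions, extracting a coercive lower bound by testing the gradient against a suitably signed element of $\VV_z$. Set $w:=u-z$. First I would record the geometric input: since $z$ realises $\dist_\HH(u,\ZZ)=\norm{u-z}_\HH$, it minimises $\zeta\mapsto\norm{u-\zeta}_\HH^2$ over the submanifold $\ZZ$, so the first-order condition at this minimiser gives $w\perp T_z\ZZ$, i.e. $w\in\VV_z$. Using the orthogonal splitting $\VV_z=\VV_z^+\oplus\VV_z^-$ of Assumption \ref{ass:A} I then write $w=w^++w^-$ with $w^\pm\in\VV_z^\pm$, so that $\norm{w}_\HH^2=\norm{w^+}_\HH^2+\norm{w^-}_\HH^2$.

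The key step is to test the difference of gradients against $\phi:=w^+-w^-\in\VV_z$, which has $\norm{\phi}_\HH=\norm{w}_\HH$. Since $\II\in C^2$, the fundamental theorem of calculus applied to $t\mapsto\ed\II(z+tw)(\phi)$ gives
\[
\ed\II(u)(\phi)-\ed\II(z)(\phi)=\int_0^1 \ed^2\II(z+tw)[w,\phi]\,dt .
\]
I would split $\ed^2\II(z+tw)=\ed^2\II(z)+\big(\ed^2\II(z+tw)-\ed^2\II(z)\big)$. For the principal term, the vanishing cross term $\ed^2\II(z)[w^+,w^-]=0$ together with the two-sided bound $\pm\,\ed^2\II(z)[w^\pm,w^\pm]\geq c_0\norm{w^\pm}_\HH^2$ yields
\[
\ed^2\II(z)[w,\phi]=\ed^2\II(z)[w^+,w^+]-\ed^2\II(z)[w^-,w^-]\geq c_0\norm{w}_\HH^2 .
\]
This is exactly why the choice $\phi=w^+-w^-$ (rather than $w$ itself) is essential: it converts the indefinite form $\ed^2\II(z)$ into a coercive one, and this coercivity trick is really the only non-routine ingredient.

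For the error term I would invoke Assumption \ref{ass:C}, shrinking $\eps_1$ if necessary so that $\eps_1\le\delta$ and hence the whole segment $z+tw$ lies in the $\delta$-neighbourhood of $\ZZ_{\la_0}$. Since $\norm{tw}_\HH\le\norm{w}_\HH<\eps_1$ and $\norm{\phi}_\HH=\norm{w}_\HH$, the operator-norm continuity estimate gives
\[
\babs{\big(\ed^2\II(z+tw)-\ed^2\II(z)\big)[w,\phi]}\leq \omega(\eps_1)\,\norm{w}_\HH^2\leq \tfrac12 c_0\norm{w}_\HH^2 ,
\]
where I use that we may assume $c_0\le 1$ (decreasing $c_0$ preserves Assumption \ref{ass:A}), so that $\omega(\eps_1)\le\tfrac12 c_0^2\le\tfrac12 c_0$. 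Integrating over $t\in[0,1]$ then gives $\ed\II(u)(\phi)-\ed\II(z)(\phi)\geq\tfrac12 c_0\norm{w}_\HH^2$.

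Finally I would bound the left-hand side by duality: as $\phi\in\VV_z$,
\[
\babs{\ed\II(u)(\phi)}+\babs{\ed\II(z)(\phi)}\le\big(\norm{\ed\II(u)}_{(\VV_z)^*}+\norm{\ed\II(z)}_{(\VV_z)^*}\big)\norm{w}_\HH .
\]
Dividing through by $\norm{w}_\HH$ (the case $w=0$ being trivial) yields the claimed bound with $C=2/c_0$. The main obstacle is conceptual rather than computational: recognising that $u-z\in\VV_z$ because $z$ is a genuine nearest point, and that the indefiniteness of $\ed^2\II(z)$ on $\VV_z$ must be circumvented by testing against $w^+-w^-$; once these are in place everything reduces to Assumptions \ref{ass:A} and \ref{ass:C} and the choice of $\eps_1$.
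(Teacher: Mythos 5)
Your proof is correct and follows essentially the same route as the paper: orthogonality of $w=u-z$ to $T_z\ZZ$ from the nearest-point property, the splitting $w=w^++w^-$ with test element $w^+-w^-$ to make $\ed^2\II(z)$ coercive, the fundamental theorem of calculus along $z+tw$, absorption of the error via $\omega(\eps_1)$, and duality in $(\VV_z)^*$. If anything, you are slightly more careful than the paper, which writes the coercivity constant as $c_0^2$ (versus the $c_0$ of Assumption \ref{ass:A}) without the normalisation $c_0\le 1$ that you make explicit, and which leaves the requirement $\eps_1\le\delta$ implicit.
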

\begin{proof}
By construction $w=u-z$ is orthogonal to $T_z\ZZ$ so we can split  $w=w^++w^-\in \VV_z^+\oplus \VV_z^-$, set $\tilde w:=w^+-w^-$ 
 and use Assumption \ref{ass:A} to bound
\beqas
\ed^2 \II(z)[w,\tilde w]=\ed^2 \II(z)[w^+,w^+]-\ed^2 \II(z)[w^-,w^-]\geq c_0^2(\norm{w^+}_{\HH}^2+\norm{w^-}_{\HH}^2)=c_0^2\norm{w}_{\HH}^2.\eeqas

Setting $u_t=z+tw$, $t\in[0,1]$, we then note that
$$\ed \II(u)[\tilde w]-\ed \II(z)[\tilde w]=\int_0^1\ed^2 \II(u_t)[w,\tilde w]\id t=\ed^2 \II(z)[w,\tilde w] - \text{err}_1\geq\half c_0^2\norm{w}_{\HH}^2$$
where we use in the last step that Assumption \ref{ass:B} and the choice of $\eps_1$ imply 
$$\text{err}_1=\int_0^1 \big(\ed^2 \II(z)-\ed^2 \II(u_t)\big)[w,\tilde w] \id t \leq \omega(\eps_1)\norm{w}_{\HH}\norm{\tilde w}_{\HH}\leq\half c_0^2\norm{w}_{\HH}^2.$$
\end{proof}

As a next step in the proof of both Theorems \ref{thm:abstract-critical-points} and \ref{thm:abstract-main} we now show 
\begin{lemma}\label{lemma:abstract2}
Suppose that Assumptions \ref{ass:A}, \ref{ass:B} and \ref{ass:C} are satisfied and let  $\eps_1>0$ be so that $\omega(\eps_1)\leq\half  c_0^2$. Then there exists $\la_1\geq \la_0$ and $C\in \R$  so that for every $u\in \HH$ with 
$\norm{u-z}_{\HH}=\dist_{\HH}(u,\ZZ)<\eps_1 $ for some $z\in \ZZ_{\la_1}$ we can bound 
\beqs
f_0(\la(z))\leq 
2\abs{\ed \II(u)(y_z)}+Cf_1(\la(z))\norm{\ed \II(u)}_{(\VV_z)^*}+C\norm{\ed \II(u)}^\nuu_{(\VV_z)^*}.
\eeqs
\end{lemma}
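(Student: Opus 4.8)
The plan is to transfer the lower bound $f_0(\la(z))\leq \abs{\ed\II(z)(y_z)}$ supplied by \eqref{ass:f0} from the point $z\in\ZZO$ to the nearby point $u$, paying for the transfer with the second variation of $\II$ along the segment joining them. Since $z$ realises the distance to $\ZZ$, the error $w:=u-z$ is orthogonal to $T_z\ZZ$, i.e. $w\in\VV_z$, and $\norm{w}_\HH<\eps_1$, exactly as in the proof of Lemma \ref{lemma:abstract1}.

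First I would apply the fundamental theorem of calculus to $t\mapsto \ed\II(z+tw)(y_z)$, giving
\beqs
\ed\II(u)(y_z)-\ed\II(z)(y_z)=\int_0^1\ed^2\II(z+tw)[y_z,w]\id t,
\eeqs
and split the integrand as $\ed^2\II(z)[y_z,w]+\big(\ed^2\II(z+tw)-\ed^2\II(z)\big)[y_z,w]$. Because $w\in\VV_z$, the $t$-independent leading term is controlled by \eqref{ass:f1}, namely $\abs{\ed^2\II(z)[y_z,w]}\leq f_1(\la(z))\norm{w}_\HH$, while the remainder is bounded pointwise in $t$ by \eqref{ass:nuu} as $C\norm{w}_\HH^{\nuu}$. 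Together with \eqref{ass:f0} this already yields
\beqs
f_0(\la(z))\leq \abs{\ed\II(u)(y_z)}+f_1(\la(z))\norm{w}_\HH+C\norm{w}_\HH^{\nuu}.
\eeqs

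Next I would substitute the bound on $\norm{w}_\HH$ from Lemma \ref{lemma:abstract1}, combined with \eqref{ass:f2}, to obtain $\norm{w}_\HH\leq C\norm{\ed\II(u)}_{(\VV_z)^*}+Cf_2(\la(z))$, and then expand using the elementary inequality $(a+b)^{\nuu}\leq 2^{\nuu-1}(a^{\nuu}+b^{\nuu})$, valid since $\nuu\geq1$. Besides the three desired terms $\abs{\ed\II(u)(y_z)}$, $f_1(\la(z))\norm{\ed\II(u)}_{(\VV_z)^*}$ and $\norm{\ed\II(u)}_{(\VV_z)^*}^{\nuu}$, this produces two spurious terms of size $Cf_1(\la(z))f_2(\la(z))$ and $Cf_2(\la(z))^{\nuu}$ that depend on $z$ alone.

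The crucial observation is that these spurious terms are precisely what \eqref{ass:quotient} is built to absorb: since $\big(f_1(\la)f_2(\la)+f_2(\la)^{\nuu}\big)/f_0(\la)\to0$ as $\la\to\infty$, I can fix $\la_1\geq\la_0$ large enough that $Cf_1(\la)f_2(\la)+Cf_2(\la)^{\nuu}\leq\half f_0(\la)$ for all $\la\geq\la_1$. For $z\in\ZZ_{\la_1}$ the two spurious terms are then bounded by $\half f_0(\la(z))$, which I move to the left-hand side; absorbing it and multiplying by $2$ gives exactly the claimed estimate (with relabelled constants). The only genuine subtlety is this absorption and the accompanying choice of $\la_1$ forced by \eqref{ass:quotient}; the rest is the routine first-order expansion of $\ed\II$ along $z+tw$ together with the two preceding inputs.
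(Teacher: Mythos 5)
Your proposal is correct and follows essentially the same route as the paper's proof: the fundamental-theorem-of-calculus expansion of $\ed\II(\cdot)[y_z]$ along $u_t=z+tw$, split into $\ed^2\II(z)[y_z,w]$ controlled by \eqref{ass:f1} and a remainder controlled by \eqref{ass:nuu}, followed by substituting the bound $\norm{w}_\HH\leq C\norm{\ed\II(u)}_{(\VV_z)^*}+Cf_2(\la)$ from Lemma \ref{lemma:abstract1} and \eqref{ass:f2}, and finally absorbing the resulting $C(f_1f_2+f_2^{\nuu})$ terms into $f_0$ via \eqref{ass:quotient} by choosing $\la_1$ large. The only cosmetic difference is that you make the absorption constant ($\half f_0$, hence the factor $2$ in the statement) and the elementary inequality $(a+b)^{\nuu}\leq 2^{\nuu-1}(a^{\nuu}+b^{\nuu})$ explicit, which the paper leaves implicit.
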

\begin{proof}
Using \eqref{ass:f0} we obtain, writing for short $\la=\la(z)$, $w=u-z$ and $u_t=z+tw$,
\beqas 
f_0(\la)&\leq \abs{\ed \II(z)[y_z]}\leq \abs{\ed \II(u)[y_z]}+\int_0^1\abs{\ed^2 \II(u_t)[y_z,w]} \id t\\
&\leq \abs{\ed \II(u)[y_z]}+ \abs{\ed^2 \II(z)[y_z,w]}+\sup_{t\in [0,1]} \abs{(\ed^2 \II(u_t)-\ed^2 \II(z))[y_z,w]}\\
&\leq \abs{\ed \II(u)[y_z]}+ f_1(\la)\norm{w}_{\HH}+C\norm{w}_{\HH}^\nuu,
\eeqas 
where we have used \eqref{ass:f1} and \eqref{ass:nuu} in the last step.
Combined with Lemma \ref{lemma:abstract1} we thus get
\beqas 
f_0(\la)&\leq \abs{\ed \II(u)[y_z]}+ Cf_1(\la)\norm{\ed \II(u)}_{(\VV_z)^*}+C\norm{\ed \II(u)}_{(\VV_z)^*}^{\nuu}
 +C(f_1(\la)f_2(\la)+f_2(\la)^\nuu).
\eeqas
For $\la_1$ chosen sufficiently large, we can use assumption \eqref{ass:quotient} to absorb the final term above into the left hand side yielding the desired estimate. 
\end{proof}

We note that Lemma \ref{lemma:abstract2} immediately implies that there cannot be any critical point $u$ so that $\dist_{\HH}(u,\ZZ)=\|u-z\|_\h<\eps_1$ for some $z\in \ZZ_{\la_1}$ which establishes Theorem \ref{thm:abstract-critical-points}. These lemmas furthermore allow us to prove our second result. 
While we carry out this proof in the general abstract setting below, for the convenience of the reader we also include this proof in the concrete setting of the $H$ functional, where the following computations are less technical, in Section \ref{sec:ill}.  
  
\begin{proof}[Proof of Theorem \ref{thm:abstract-main}] 
Let $u$ be so that the assumptions of the theorem are satisfied for some $\la_*\geq \la_1>1$ that is chosen below. We first combine Lemma \ref{lemma:abstract2} with  Assumption \ref{ass:D} to bound 
\beqas
f_0(\la) 
&\leq C \norm{d \II(u)}_{*} (f_1(\la)+ f_3(\la))+C\norm{\ed \II(u)}_{*}^\nuu.
\eeqas 
This allows us to conclude that  
\beqa \label{est:proving-lambda-bound1}
1&\leq C\la^{\gamma_0-\gamma_{1,3}}(\log \la)^{\si_0-\si_{1,3}}\norm{\ed \II(u)}_{*} +C\big(\la^{\gamma_0/\nuu}(\log \la)^{\si_0/\nuu}\norm{\ed \II(u)}_{*}\big)^\nuu.
\eeqa
In the case where $\gamma_0>0$ we thus obtain that  
\beqs 
1\leq C \max\{\la^{\gamma_0-\gamma_{1,3}}(\log \la)^{\si_0-\si_{1,3}}, \la^{\gamma_0/\nuu}(\log \la)^{\si_0/\nuu}\} 
\norm{\ed \II(u)}_{*}\leq C \la^{\frac{1}{\al_1}} (\log \la)^{\frac{\beta_1}{\al_1}}\norm{\ed \II(u)}_{*}\eeqs
 for $\al_1>0$ and $\beta_1\in\R$ defined in \eqref{def:gamma-sigma-thm}. 
 The resulting bound of 
\beq \label{est:la-proof-lemma-3-abstract2} \la^{-1}\leq C \norm{\ed \II(u)}_{*}^{\al_1}(\log \la)^{\beta_1}\eeq
immediately yields the claim \eqref{claim:la-thm-abstract} if $\beta_1=0$. If 
$\beta_1<0$ we additionally use that this bound implies that $\la^{-1}\leq C \norm{\ed \II(u)}_{*}^{\al_1}$ and hence $\log \la\geq c \abs{\log(\norm{d \II(u)}_{*})}$ and then combine this with  \eqref{est:la-proof-lemma-3-abstract2} to obtain the estimate \eqref{claim:la-thm-abstract} claimed in the theorem. Finally, if $\beta_1>0$, we note that \eqref{claim:la-thm-abstract} is trivially true if $\la\geq \norm{d \II(u)}_{*}^{- \alpha_1}$, while for $\la \leq  \norm{d \II(u)}_{*}^{-\alpha_1}$ we can 
 bound  $(\log \la)^{\beta_1}\leq C\abs{\log \norm{d \II(u)}_{*}}^{\beta_1}$ and combine this with \eqref{est:la-proof-lemma-3-abstract2} to obtain the claimed bound \eqref{claim:la-thm-abstract} on $\la^{-1}$. 
  
The analogue bound \eqref{claim:la-thm-abstract-exp} in the case that $\gamma_0=0$ can be immediately deduced from  \eqref{est:proving-lambda-bound1} which in this case implies that 
\beqs 1\leq C(\log \la)^{1/\eta_1}\norm{\ed \II(u)}_{*}\eeqs for $\eta_1$ as in Remark \ref{rmk:coeff-thm-abstract-exp}.

 To prove the \Loj -type estimates \eqref{claim:Loj1-thm-abstract-general} and \eqref{claim:Loj2-thm-abstract} in the case that $\gamma_0>0$ we will use that \eqref{claim:la-thm-abstract} implies 
\beq \label{est:fi-proof-abstract} 
\text{$f_i(\la)\leq \la^{-\gamma_i}(\log \la)^{-\si_i}\leq  C\norm{\ed \II(u)}_{*}^{\gamma_i\al_1}\abs{\log\norm{\ed \II(u)}_*}^{\gamma_i\beta_1-\si_i}$,  $i=2,4$.}\eeq
We can now combine this bound on $f_2$ with the 
 estimate 
\beq 
\label{est:w-proof-abstract}
\norm{u-z}_{\HH}\leq C\norm{\ed \II(u)}_{*}+Cf_2(\la)\eeq
obtained in Lemma \ref{lemma:abstract1} to deduce that the claimed  estimate \eqref{claim:Loj1-thm-abstract-general} holds for exponents $(\alpha_2,\beta_2)=\FF_0((1,0), (\gamma_2\al_1,\gamma_2\beta_1-\si_2))$. 
To derive the \Loj -estimate \eqref{claim:Loj2-thm-abstract} we then note that 
\beqa\label{est:2nd-Loj-abstract-proof}
\abs{\II(u)-\II^*}&\leq \abs{\II(u)-\II(z)}+\abs{\II^*-\II(z)}\leq 
 \abs{\ed \II(z)[w]}+\sup_{t\in [0,1]}\abs{(\ed \II(u_t)-\ed \II(z))[w]}+f_4(\la)\\
&\leq f_2(\la)\norm{w}_{\HH}+C\norm{w}^2_{\HH}+f_4(\la)
\leq C\norm{\ed \II(u)}_{*}^{2\al_2}\abs{\log\norm{\ed \II(u)}_{*}}^{2\si_2}+Cf_4(\la)
\eeqa
where we applied \eqref{claim:Loj1-thm-abstract-general} in the last step. 
Combined with \eqref{est:fi-proof-abstract} we obtain that indeed 
$$
\abs{\II(u)-\II^*}\leq C\norm{\ed \II(u)}_{*}^{\al_3}\abs{\log\norm{\ed \II(u)}_{*}}^{\si_3}
$$
for $(\al_3,\si_3)=\FF_0((2\al_2,2\si_2), (\gamma_4\al_4, \gamma_4\beta_1-\si_4))$. 

We finally return to the case that $\gamma_0=0$ where we know that $C\log \la\geq \norm{\ed \II(u)}_{*}^{-\eta_1}$. If  $\gamma_2>0$ then the second term in \eqref{est:w-proof-abstract} is exponentially small so we obtain that \eqref{claim:Loj1-thm-abstract-exp} holds true with optimal exponent $\eta_2=1$ while for $\gamma_2=0$ this estimate is valid with $\eta_2=\min\{1,\eta_1\si_2\}$.

Finally, from \eqref{est:2nd-Loj-abstract-proof} and the above we obtain \eqref{claim:Loj2-thm-abstract-exp}: 
$$\abs{\II(u)-\II^*}\leq  C \norm{\ed \II(u)}_*^{2\eta_2}+ Cf_4(\la)\leq C   \norm{\ed \II(u)}_*^{\eta_3}$$
where $\eta_3=2\eta_2$ if $\gamma_4>0$ while $\eta_3=\min\{2\eta_2,\si_4\eta_1\}$ otherwise. 

\end{proof}

%--------------------MAIN PROOFS------------------------------------
%--------------------MAIN PROOFS------------------------------------
%--------------------MAIN PROOFS------------------------------------

\section{Proof of the main results on the $H$-energy}
\label{sec:main-proofs}

\subsection{Definition of the set of adapted bubbles}
\label{sec:bubble-space}

Let $\Sigma$ be a closed Riemann surface of positive genus $\gamma\geq 1$ which we equip with the unique compatible hyperbolic metric, respectively the unique flat metric for which $\Vol(\Sigma, g)=1$.

The goal of this section is to define the set of adapted bubble $\ZZ=\{Rz_{\la,a}\}$. As explained in the introduction, 
the idea is to define our $z_{\la,a}$ essentially as 
 an interpolation between a (highly-concentrated if $\lambda \gg1$) standard bubble $\pi_{\la}=\pi(\la\cdot)$ 
 based at $a\in \Sigma$, and a suitable multiple of the first derivatives of the Green's function on $(\Sigma,g)$ in the first two-components, and a simple cut-off in the third. 
 Here and in the following we continue to denote by $\pi$ the inverse stereographic projection $\pi(x)=\left(\frac{2x}{1+|x|^2} , \frac{1-|x|^2}{1+|x|^2}\right) $ and we want to define $z_{\la,a}$ in a way that we have $z_{\la,a}\approx \pi_{\la}(x)$ in   oriented isothermal coordinates $x=F_a(p)$ on $B_\iota(a)\In (\Si,g)$,  $\iota:=\half \text{inj}(\Si,g)$, which are as follows: 
\begin{remark} \label{rmk:def-Fa}
If $g$ is hyperbolic we set $\rho= \tanh(\iota/2)$ and use that  
for any $a\in \Si$ 
there exists an orientation preserving isometric isomorphism 
$$F_a:(B_{\iota}(a),g) \to \left(\mathbb{D}_{\rho},\tfrac{4}{(1-|x|^2)^2}g_E\right)$$ where $\mathbb{D}_{\rho}:=\{x\in \R^2: \abs{x}<\rho\}$ and $g_E$ is the Euclidean metric. 
 In the flat case we will always \emph{a-priori} pick a tiling of $\R^2$ which represents $\Sigma$, set $\rho=\iota$ and use the resulting euclidean translations $F_a$ to the origin as  coordinates.
\end{remark}
When the genus is larger than one, we note that $F_a$ is determined only up to 
a rotation, but that this will not affect the definition of the bubbles set, see Remark \ref{rmk:coord} below. 

\

For points in a ball $B_{2\iota}(q)\In (\Si,g)$ we can write the Green's function on a flat torus as 
  \begin{equation*}%\label{eq:Grn_expT}
 G(p,a) = - \log d_g(p,a) + J(p,a)
  \end{equation*}
while for hyperbolic surfaces
 \begin{equation*}%\label{eq:Grn_exp}
 G(p,a) = - \log \left(\tanh \tfrac{d_g(p,a)}{2}\right) + J(p,a)
  \end{equation*}
 for a smooth function $J_a:B_{r}(q)\times B_{r}(q)\to \R$ which we call the regular part of Green's function. 
In the coordinates  introduced in Remark \ref{rmk:def-Fa} Green's function and its regular part are described by 
\begin{equation}\label{eq:locG}
G_a(x,y):=G(F_a^{-1}(x), F_a^{-1}(y))= - \log |x-y| + J_a(x,y).
\end{equation}
In the flat setting $J_a(x,y) = J(F_a^{-1}(x), F_a^{-1}(y))$ however in the hyperbolic setting $J_a$ carries an extra term equal to $\log|x-y|- \log \left(\tanh \tfrac{d_g(F_a^{-1}(x), F_a^{-1}(y))}{2}\right)$ which is harmonic and quadratic at the origin. 
Hence we have
\beq
\label{eq:locG-deriv}
 \na_y G_a(x,0)=\frac{x}{\abs{x}^2}+ \na_y J_a(x,0)
\eeq
for the smooth function $J_a$ which represents the regular part of Green's function. We also note that 
 the conformal covariance of the Laplacian implies that $\na_y J_a(x,0)$ is harmonic. 

\

To construct our adapted bubbles we fix some $r\in(0,\iota/4)$ and a cut-off function %$\psi \in C_c^\infty (\mathbb{D}_r,[0,1])$ and 
$\phi\in C_c^\infty(\mathbb{D}_{2r},[0,1])$ with %$\psi \equiv 1$ on $\mathbb{D}_{r/2}$, 
$\phi\equiv 1$ on $\mathbb{D}_r$. % and set 
% \beq
% \label{def:j} 
% -j(x):=\psi \tfrac{1}{r^2} + (1-\psi)\tfrac{1}{|x|^2}.\eeq
Given any $a\in \Si$ and  $F_a :B_{\iota}(a) \to \mathbb{D}_{\rho}$ a choice of isometry as in Remark \ref{rmk:def-Fa}, we define
 \begin{eqnarray}
\tilde{z}_{\lambda, a}(p) = \twopartdef{\hat{z}_{\lambda,a}(F_a(p))}{p\in B_{\iota}(a)}{\frac{2}{\lambda}(\de_{a^1}G(p,a), \de_{a^2} G(p,a),0)}{p\notin B_{\iota}(a)}	
\end{eqnarray}
where  $\de_{a^i} = (F_a^{-1})_{\ast} \de_{y^i}$ and where $\hat{z}_{\lambda, a}$ is given in the local coordinates $x=F_a(p)$ by
\beq\label{eq:zhat}
\hat{z}_{\lambda,a}(x) : = \phi(x) \left(\bl_{\lambda}(x) + \left(\tfrac{2}{\lambda}\D_y J_a(x,0), 1\right)\right) + (1-\phi(x)) \left(\tfrac{2}{\lambda} \D_y G_a(x,0),0 \right).
\eeq
We then set
\beqs
z_{\lambda,a} = \tilde{z}_{\lambda, a} - \dashint_\Sigma \tilde{z}_{\lambda ,a} \in \dot H^1(\Sigma,\R^3), 	
\eeqs
and define the full set of adapted bubbles by  
\begin{equation*}%\label{eq:bubble_space}
\mathcal{Z}:=\{R z_{\lambda,a} | a\in \Sigma, R\in SO(3), \lambda >0\}. 	
\end{equation*}
As in the sequel we will be interested in adapted bubbles of sufficiently high concentration,  i.e. with  $\la \geq \mu \gg 1$, we furthermore set 
$\mathcal{Z}_\mu:=\{R z_{\lambda,a} | a\in \Sigma, R\in SO(3), \lambda \geq \mu\}. 	$

\begin{remark}\label{rmk:distance}
We note that our adapted bubble set $\ZZ$ is so that for each $a,b\in \R^+$,   
$\{z\in \ZZ: a\leq \la(z)\leq b\}$ is contained in a compact subset of $\dot H^1$. Furthermore we can chose $\eps>0$ small enough and $\la_*$ large enough so that for any $z\in \ZZ_{\la_*}$, $\|z-\tilde{z}\|_{\dot H^1} >2\eps$ for all $\tilde{z}\in \ZZ$ with $\la(\tilde{z}) \notin [\la(z)/2, 2\la(z)]$.    

Thus for suitable choices of $\eps$ and $\la_*$ in Theorem \ref{thm:main}, $\dist_{\dot{H}^1}(u,\ZZ)$ is attained by some $z\in \ZZ_{\la_*/2}$ for any $u$ satisfying the hypotheses. 
\end{remark}

\begin{remark}
\label{rmk:coord}
We finally note that the definition of the set of adapted bubbles is independent of the chosen isometry $F_a$ as a subset of the rotations $R$ naturally corresponds to rotations of the local coordinates in the domain. 
\end{remark}

If one wants to introduce local coordinates on the set of adapted bubbles when $\Sigma$ is hyperbolic, then one can use the following choice of isometries $F_a$ for points $a$ in a neighbourhood of a fixed $a_0\in \Si$.
\begin{remark}
Given a point $a_0\in \Si$, a tiling of the Poincar\'e disc $\mathbb{D}_1$ by fundamental domains of $\Sigma$ and a representative $\hat a_0\in \mathbb{D}_1$ of $a_0$
 which minimises the hyperbolic distance to the origin there is a canonical choice of 
 isometries $F_a$ for $a\in B_\iota(a_0)$:

 For all $ a\in B_\iota(a_0)$ we let $\hat a$ be the unique representative in $\mathbb{D}_1$ 
with distance less than $\iota$ to $\hat a_0$ and use that there 
 is a unique choice of hyperbolic translation $\tau_{\hat a}$ taking $\hat a$ to the origin. The local coordinate chart $F_a:B_\iota(a)\to \DD_\rho$ is then obtained by picking the representative $\hat p$ with distance less than $\iota$ from $a$ and then applying this hyperbolic translation, i.e. by setting $F_a(p)=\tau_{\hat a}(\hat p)$.
\end{remark}

\subsection{Proof of Theorems \ref{t:gap} and \ref{thm:main}}
In this section we explain how our main results on $H$-surfaces can be derived from the abstract results of Section \ref{s:abstract}. To this end we state a series of lemmas whose proofs are carried out in Section \ref{sec:proofs-of-lemmas}.

We first note that the $H$ energy can be written in a coordinate free form as
\begin{equation}\label{eq:defEcf}
E(u):= \frac12\int_{\Sigma} |\D u|^2 -\frac{1}{3} \int_\Sigma u\cdot \D u \dot \wedge \Db u.
\end{equation}
We use the notation $\Db b=(\ast_g \ed b)^\sharp$ so that $\Db b = e^{-2\varrho}(-\de_{x_2} b , \de_{x_1} b )$ in oriented local isothermal coordinates, and write
\begin{equation*}%\label{eq:wedgedef}
\D w \dot\wedge \Db v  :=  \left( \begin{array}{c}
\D w^2\cdot\Db v^3 - \D w^3\cdot \Db v^2 \\
\D w^3 \cdot \Db v^1 - \D w^1 \cdot \Db v^3 \\
\D w^1 \cdot \Db v^2 - \D w^2 \cdot \Db v^3
\end{array}\right).  
\end{equation*} 
We note that this expression is \emph{symmetric} in $v$ and $w$ and recall that for $u,v,w\in \dot H^1(\Sigma,\R^3)$
\beq \label{eq:symm}
\int_{\Sigma} u\cdot \D w \dot \wedge \Db v = \int_{\Sigma} w\cdot \D u \dot \wedge \Db v .  \eeq

\begin{remark}\label{rmk:int}
Since the integrands $c\D a \cdot \Db b$, for $a,b,c\in \dot{H}^1$ may not be absolutely integrable, their integrals must be understood in a limiting sense:  approximate $c$ in the $\dot H^1$-norm by $c_n\in C^\infty\cap \dot H^1(\Sigma)$ and use Wente's estimate to define $\int c\D a \cdot \Db b=\lim_{n\to \infty} \int c_n\D a \cdot \Db b$. We will continue to do this in the sequel without drawing further attention to it.  
\end{remark}

We hence immediately obtain the following well known expressions of the first and second variation of $E$: Given any 
 $u, v,w\in \dot H^1(\Sigma, \R^3)$ we have 
\beq \label{eq:dE}
\ed E (u) [v]= \int_{\Sigma}  \D u \cdot \D v - v\cdot \D u \dot \wedge \Db u 
\eeq
while 
$\ed^2 E(u)[w,v]= \ppl{}{t}{s}\vlinesub{s,t=0} E(u+sv+tw)$ is given by  
\begin{equation} \label{eq:d2E}
\ed^2 E(u)[w,v]=\int_{\Sigma} \D w \cdot \D v - 2 u\cdot (\D v \dot\wedge \Db w)=\int_{\Sigma} \D w \cdot \D v - 2(\Db w \wedge u)\cdot \D v  
\end{equation} where the second equality follows as  $u\cdot (\D v \dot\wedge \Db w)=(\Db w \wedge u) \cdot \D v $.
Wente's inequality   
immediately implies that, for all $u_1,u_2, v,w\in \dot{H}^1$, 
\beq\label{est:D2E-H}
	|\ed^2E(u_1)[w,v] - \ed^2 E(u_2)[w,v]| =\left| \int_\Sigma 2(u_1-u_2)\cdot \D v\dot\wedge \Db w\right|  
	\leq  C \|u_1-u_2\|_{\dot H^1}\|v\|_{\dot H^1}\|w\|_{\dot H^1} %
\eeq
for a universal constant $C$ that could be made explicit using the results we recall in Section \ref{s:Wente}.

Furthermore the formula for $\ed^2 E(u)$ combined with \eqref{eq:symm} easily implies that for every map $u$ for which $\na u\in L^p$ for some $p>2$, so in particular for our adapted bubbles $z\in\ZZ$, 
the Jacobi operator $L_u$ 
of the $H$-surface energy
can be written in the form 
 $L_u =\text{Id}+K_u$ for a compact self adjoint operator $K_u$, see Section \ref{subsec:definite}. The analogue statement hence also holds true for the projected Jacobi operator $\hat L_z:=P^{\VV_z}\circ L_z$, $z\in\ZZ$, so in order to establish that Assumption \ref{ass:A} is satisfied in the present setting it suffices to prove

\begin{lemma}\label{lem:nondeg}
Let $(\Sigma, g)$ be a closed oriented Riemannian surface of positive genus with Gauss curvature  $K_g \in \{-1,0\}$ and let $\ZZ$ be the set of adapted bubbles defined in Section \ref{sec:bubble-space}. 

There exists $c_0>0$ and $\la_0 >1$ so that for every $z\in \mathcal{Z}_{\la_0}$ the spectrum of the projected Jacobi operator $\hat L_z:=P^{\VV_z}\circ L_z$ satisfies 
	$$\sigma(\hat L_z)\cap (-c_0,c_0)=\emptyset.$$
\end{lemma}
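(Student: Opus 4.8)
The goal is a uniform spectral gap $(-c_0,c_0)$ for the projected Jacobi operators $\hat L_z$ as $\lambda(z)\to\infty$. The plan is to exploit the structure $L_z=\mathrm{Id}+K_z$ with $K_z$ compact, together with the fact that the adapted bubble $z_{\lambda,a}$ converges (in a suitable blown-up sense) to the standard bubble $\pi$ on $\Sp^2$, whose Jacobi operator is explicitly understood. The key external input is the complete non-degeneracy analysis of the second variation at the standard bubble: the kernel of $\ed^2E$ at a degree-one conformal parametrisation of the sphere is exactly six-dimensional, spanned by the conformal and translational Jacobi fields, and there are no other zero modes (cf.\ the references to \cite{BC85,CM05}). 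Crucially, these six Jacobi fields are precisely the tangent directions to $\BB^1$, which correspond to the tangent space $T_z\ZZ$ that we project out.

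First I would establish the reduction that makes Assumption \ref{ass:A} follow from a spectral gap. Since $\na z\in L^p$ for $p>2$ on any adapted bubble, the formula \eqref{eq:d2E} combined with Wente's inequality and the compactness of the embedding $\dot H^1\emb L^q$ shows $K_z:v\mapsto -2(\Db v\wedge z)^{\sharp}$ (interpreted via the $\dot H^1$ inner product) is a compact self-adjoint operator, so $L_z$ and hence $\hat L_z=P^{\VV_z}\circ L_z$ have discrete spectrum in $(-\infty,1)$ accumulating only at $1$. Thus establishing $\sigma(\hat L_z)\cap(-c_0,c_0)=\emptyset$ amounts to bounding the finitely many small eigenvalues away from zero, uniformly in $z\in\ZZ_{\la_0}$.

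Next I would argue by contradiction and blow-up. Suppose no uniform gap exists: then there are $z_k=R_k z_{\lambda_k,a_k}$ with $\lambda_k\to\infty$ (after passing to a subsequence, using that the low range $\lambda\in[a,b]$ is compact by Remark \ref{rmk:distance} and handled separately) and normalised eigenfunctions $w_k\in\VV_{z_k}$, $\norm{w_k}_{\dot H^1}=1$, with $\hat L_{z_k}w_k=\mu_k w_k$ and $\mu_k\to 0$. Rescaling around the concentration point $a_k$ and passing to the blown-up picture on $\R^2$ (equivalently $\Sp^2$), one shows $w_k$ converges weakly to a limit $w_\infty$ on $\Sp^2$; the eigenvalue equation passes to the limit because the nonlocal term $K$ converges strongly by Wente/compactness, giving $\ed^2E_{\Sp^2}(\pi)[w_\infty,\cdot]=0$, i.e.\ $w_\infty$ lies in the kernel of the Jacobi operator at the standard bubble. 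By the non-degeneracy cited above, $w_\infty$ must be a linear combination of the six explicit Jacobi fields. But those Jacobi fields are exactly the blow-up limits of $T_{z_k}\ZZ$, whereas $w_k\in\VV_{z_k}=(T_{z_k}\ZZ)^{\perp}$; passing this orthogonality to the limit forces $w_\infty=0$. One then needs to upgrade weak to strong convergence to contradict $\norm{w_k}_{\dot H^1}=1$: writing $\mu_k w_k=w_k+K_{z_k}w_k$, the compact term $K_{z_k}w_k$ converges strongly while $\mu_k\to 0$, so $w_k=-K_{z_k}w_k+\mu_kw_k$ converges strongly to $-K w_\infty+0=w_\infty=0$, contradicting the normalisation.

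The main obstacle will be the second step: making the blow-up limit rigorous despite the change of domain and the nonlocal, conformally-covariant structure of $E$. I expect two delicate points. First, one must verify that the rescaled second-variation bilinear forms genuinely $\Gamma$-converge (or that the operators converge in the appropriate sense) from $(\Sigma,g)$ to $\Sp^2$, controlling the error introduced by the Green's-function tail of the adapted bubble and the cut-offs $\phi,\psi$ in \eqref{eq:zhat}; this requires the energy estimates established in the construction of $\ZZ$ (e.g.\ Lemma \ref{lemma:principal-term} and the $\dot H^1$-control of $z_{\lambda,a}-\pi_\lambda$). Second, because the Jacobi fields on $\Sp^2$ decay only slowly and the limiting domain is non-compact after stereographic projection, care is needed to ensure no energy escapes at the neck or at spatial infinity when passing orthogonality of $w_k$ to $T_{z_k}\ZZ$ into the limit; this is precisely where the careful, Green's-function-adapted construction of $\ZZ$ pays off, since it guarantees that the tangent vectors to $\ZZ$ converge strongly to the correct six Jacobi fields rather than degenerating.
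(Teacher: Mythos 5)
Your overall strategy is the same as the paper's: argue by contradiction, blow up around the concentration point, pass the eigenvalue equation to the limit, use the kernel characterisation at the standard bubble (Lemma \ref{lem:ker}, from \cite{CM05}) together with orthogonality to $T_{z_k}\ZZ$ to force the limit $w_\infty=0$, and then derive a contradiction with $\norm{w_k}_{\dot H^1}=1$. However, there is a genuine gap at the final step, and it is precisely the hardest point of the proof. You write that since $\mu_k\to 0$ and $K_{z_k}$ is compact, ``the compact term $K_{z_k}w_k$ converges strongly'', so $w_k=-K_{z_k}w_k+\mu_k w_k\to w_\infty=0$ strongly. This does not follow: compactness of a \emph{fixed} operator $K$ upgrades weak convergence $w_k\rightharpoonup w$ to strong convergence of $Kw_k$, but here the operators $K_{z_k}$ vary with $k$ and their ``kernels'' degenerate, since $z_k$ concentrates at scale $\la_k^{-1}$ and $z_k\rightharpoonup 0$ in $\dot H^1(\Sigma)$ while $\norm{K_{z_k}}$ stays bounded away from $0$. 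The family $\{K_{z_k}\}$ is not collectively compact in any sense that would justify your claim; indeed, the possible failure of strong convergence of $c_k:=K_{z_k}w_k$ — energy of $c_k$ escaping into the neck region between the bubble scale $\la_k^{-1}$ and the fixed scale — is exactly the loss-of-compactness phenomenon the lemma has to rule out.

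The paper closes this gap with a quantitative neck analysis, which has no abstract shortcut: it proves the claim
\begin{equation*}
\lim_{\Lambda \to \infty} \lim_{k\to \infty}\norm{\D c_k}_{L^2(\Sigma \sm B_{\Lambda \lambda_k^{-1}} (a_k))}=0,
\end{equation*}
i.e.\ \eqref{eq:claim}, by first showing $c_k\to 0$ in $H^2_{loc}$ away from the concentration point, then establishing the dyadic-annulus energy decay \eqref{eq:annuli} by a rescaling argument, and finally combining a harmonic replacement on the neck with an $L^{2,\infty}$ bound for the harmonic part, a uniform $L^{2,1}$ bound for $\D c_k$ coming from the Hardy-space estimates of Coifman--Lions--Meyer--Semmes \cite{CLMS93}, and $L^{2,\infty}$--$L^{2,1}$ duality in the style of Lin--Rivi\`ere \cite{LinR01}. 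Only with \eqref{eq:claim} in hand (together with Lemma \ref{lemma:proj} and the relation \eqref{eq:rel}) can one conclude that all of the unit energy of $w_k$ passes to the blow-up limit, contradicting $w_\infty=0$. Your closing paragraph also slightly misplaces the difficulty: the delicate point is not the convergence of the tangent vectors of $\ZZ$ to the Jacobi fields (that is Lemma \ref{lemma:proj}, comparatively routine given the explicit construction of $\ZZ$), but the no-neck-energy property of $c_k$ just described. As written, your proof asserts the conclusion of the neck analysis rather than proving it.
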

 
 We will prove this lemma in Section \ref{subsec:definite} by using a careful compactness argument to reduce it to the corresponding result for maps from the sphere which was established in \cite{CM05}.
 
The second key ingredient needed to prove our main results is the expansion of the energy at elements of $\ZZ$ alluded to in the introduction. Due to rotational invariance it suffices to consider elements of the form $z=z_{\la,a}$. 

\begin{lemma} \label{lemma:principal-term}
Let $(\Sigma, g)$ be a closed oriented Riemannian surface of positive genus with Gauss curvature $K_g \in \{-1,0\}$ and let $\ZZ$ be the set of adapted bubbles defined in Section \ref{sec:bubble-space}.
Then for large $\la$ the energy of elements $z_{\la,a}$ is given by 
\beq
\label{eq:energy-expansion} 
	E(z_{\lam,a}) = \frac{4\pi}{3} - \frac{4\pi}{\lam^2} \mathcal{J}(a)
	+ O(\lam^{-3}), 
\eeq
and its derivative with respect to the bubble scale satisfies 
\beqs
%\label{eq:energy-la-der} 
 \de_\la E(z_{\la,a}) =  \frac{8\pi}{\lam^3} \mathcal{J}(a) + O(\lam^{-4}) 
\eeqs
where we use our local coordinates from Remark \ref{rmk:def-Fa} to define
\beq \label{eq:Jdef}
\mathcal{J}(a) :=\lim_{x\to 0} (\partial_{y_1}\partial_{x_1}+\partial_{y_2}\partial_{x_2}) G_a(x,0).
\eeq
\end{lemma}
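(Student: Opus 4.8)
The plan is to reduce the computation to an explicit matched-asymptotic analysis in the isothermal chart $x=F_a(p)$, exploiting the two invariances of $E$. First I would use that $E$ is invariant under translations in the target and that $z_{\la,a}$ differs from $\tilde z_{\la,a}$ only by the additive constant $\dashint_\Si\tilde z_{\la,a}$; hence $\D z_{\la,a}=\D\tilde z_{\la,a}$ and $V(z_{\la,a})=V(\tilde z_{\la,a})$, so it suffices to expand $E(\tilde z_{\la,a})$. Since both the Dirichlet energy $\tfrac12\int|\D\cdot|^2\,\ed V_g$ and the volume $V$ are conformally invariant, inside $B_\iota(a)$ I may replace $g$ by the Euclidean metric and work with $\hat z_{\la,a}$ on $\DD_\rho$, while on $\Si\sm B_\iota(a)$ the map equals $\tfrac{2}{\la}(\de_{a^1}G,\de_{a^2}G,0)$. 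The decisive structural point is that wherever the third component is locally constant --- in particular on all of $\Si\sm\supp\phi$ --- the density $u\cdot(u_{x_1}\wedge u_{x_2})$ vanishes pointwise, so the volume \emph{localises} to $\supp\phi$, whereas the Dirichlet energy does not: its contribution from $\Si\sm B_\iota(a)$ is $\tfrac{2}{\la^2}\int_{\Si\sm B_\iota(a)}|\D^2 G|^2$, which already enters at order $\la^{-2}$ and must be retained.

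The heart of the argument is the extraction of the $\la^{-2}$ coefficient. I would split $\DD_\rho$ into the bubble core $\{|x|\lesssim\la^{-1}\}$, the neck $\{\la^{-1}\ll|x|\ll1\}$ and the transition annuli where $\phi,\psi$ vary, rescale $y=\la x$ in the core, and use that by \eqref{eq:zhat} the first two components of $\hat z_{\la,a}$ interpolate smoothly between the genuine bubble $\bl_\la$ in the core and $\tfrac{2}{\la}\D_y G_a(x,0)=\tfrac{2}{\la}(\tfrac{x}{|x|^2}+\D_y J_a(x,0))$ outside, the construction being designed precisely so that the singular tail $\tfrac{2}{\la}\tfrac{x}{|x|^2}$ of $\bl_\la$ matches the singular part of $\tfrac{2}{\la}\D_y G_a$ from \eqref{eq:locG-deriv}. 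Writing $\hat z_{\la,a}=\bl_\la+\eta_\la$ with $\eta_\la$ collecting the added regular field $\tfrac{2}{\la}\D_y J_a$, the third-slot correction $\tfrac{2}{\la^2}j$, and the cutoff errors, I would expand $|\D\hat z_{\la,a}|^2=|\D\bl_\la|^2+2\,\D\bl_\la\cdot\D\eta_\la+|\D\eta_\la|^2$. The first term integrates over $\R^2$ to $8\pi$, with the tail beyond $\DD_\rho$ contributing $O(\la^{-2})$ that is accounted for against the Green's-function tail. The cross term is the key: restricted to the first two components and integrated by parts, it reduces (using that $\D_y J_a(x,0)$ is harmonic, as noted after \eqref{eq:locG-deriv}, and $-\Dl\bl=2\de_{x_1}\bl\wedge\de_{x_2}\bl$) to a boundary integral around the core, which by the elementary identity $\oint_{\partial\DD_\e}x_jx_k\,ds=\pi\e^3\delta_{jk}$ evaluates to a multiple of $\sum_i\de_{x_i}\de_{y_i}J_a(0,0)=\mathcal{J}(a)$; here the singular part of $G_a$ contributes zero to this trace, which is the analytic manifestation of the divergent core/neck pieces cancelling. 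The quadratic term $|\D\eta_\la|^2$ is likewise $O(\la^{-2})$ and must be tracked explicitly.

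In parallel I would expand the volume $V(\tilde z_{\la,a})=\tfrac13\int_{\DD_\rho}\hat z_{\la,a}\cdot(\de_{x_1}\hat z_{\la,a}\wedge\de_{x_2}\hat z_{\la,a})$ around $V(\bl_\la)=\tfrac{4\pi}{3}$, again isolating an $O(\la^{-2})$ correction proportional to $\mathcal{J}(a)$, and then assemble $E=\tfrac12\int|\D\cdot|^2-2V$ so that all the $\la^{-2}$ pieces --- the Dirichlet cross term, the quadratic term, the third-component correction, the two tails, and the volume correction --- combine with their correct relative factors into $-\tfrac{2\pi}{\la^2}\mathcal{J}(a)$, with every remaining cutoff and transition contribution shown to be $O(\la^{-3})$. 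For the $\la$-derivative, rather than differentiating the asymptotic expansion (which is not automatically licensed), I would compute $\de_\la E(z_{\la,a})=\ed E(z_{\la,a})[\de_\la z_{\la,a}]$ directly from \eqref{eq:dE}, carrying out the same region decomposition on the explicitly differentiable family; the scaling homogeneity of the bubble tail in $\la$ together with $\de_\la\bl_\la$ being concentrated at the bubble scale yields $\tfrac{4\pi}{\la^3}\mathcal{J}(a)+O(\la^{-4})$, consistent with the formal derivative of the energy expansion.

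The step I expect to be the main obstacle is the precise bookkeeping at order $\la^{-2}$: isolating the single cross-term between the bubble and the regular part $J_a$ as the source of $\mathcal{J}(a)$, while simultaneously verifying that (i) the divergent core and neck contributions of the singular part cancel against the Green's-function tail on $\Si\sm B_\iota(a)$, (ii) the errors from the cutoffs $\phi,\psi$ and from the explicit correction $\tfrac{2}{\la^2}j$ are genuinely $O(\la^{-3})$, and (iii) the Dirichlet and volume corrections combine with the correct relative factor to produce $-\tfrac{2\pi}{\la^2}\mathcal{J}(a)$ rather than a spurious multiple. Controlling (ii) relies on $\phi,\psi$ being supported where $\la|x|\gg1$, so that $\bl_\la$ is already deep in its tail regime there, and on Wente-type estimates to make sense of the non-absolutely-convergent volume integrals as in Remark \ref{rmk:int}.
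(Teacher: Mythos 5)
You prove the two claims in the opposite logical order from the paper, and that is the essential difference. For $\de_\la E(z_{\la,a})$ you propose what the paper does --- test the first variation against $\de_\la z_{\la,a}$ and localise to the bubble region --- but the paper makes this computation short by passing to the Euler--Lagrange density and using that, in the chart, $\hat z_\la = \bl_\la + j_\la$ with $\bl_\la$ an exact solution of \eqref{eq:Heq} on $\R^2$ and $j_\la$ harmonic, so that
\beqs
-\Dl\hat z_\la - \D\hat z_\la\dot\wedge\Db\hat z_\la = -2\,\D\bl_\la\dot\wedge\Db j_\la + O(\la^{-2}),
\eeqs
leaving a single explicit integral $-2\int\de_\la\bl_\la\cdot(\D\bl_\la\dot\wedge\Db j_\la)$, evaluated by Taylor expansion of $j_\la$ at the origin and the symmetries of $\D\bl_\la\dot\wedge\Db\bl_\la = \tfrac{8\la^2}{(1+\la^2|x|^2)^2}\bl_\la$; this is where $\mathcal{J}(a)$ enters. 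Given the derivative expansion, \eqref{eq:energy-expansion} costs nothing: since $E(z_{\la,a})\to\tfrac{4\pi}{3}$ as $\la\to\infty$, one integrates $\de_s E(z_{s,a}) = \tfrac{4\pi}{s^3}\mathcal{J}(a)+O(s^{-4})$ over $s\in[\la,\infty)$. You correctly refuse to differentiate an asymptotic expansion, but the legitimate direction --- integrating the derivative expansion --- is available and would let you delete your entire first part. That first part, the direct expansion of $\tfrac12\int|\D\cdot|^2-2V$ around the bubble in the spirit of \cite{CM05,T00}, is a viable alternative route (it is how the analogous planar expansions were obtained), and what it buys is an expansion of $E(z_{\la,a})$ with no reference to the $\la$-derivative; what it costs is exactly the bookkeeping you yourself flag as the main obstacle.

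Two cautions about that first part. First, the decisive cancellation is asserted rather than established, and it is not routine: the individual $O(\la^{-2})$ contributions are larger than the answer and of mixed signs. For instance, the Dirichlet cross term alone, integrated by parts against $-\Dl\bl_\la = |\D\bl_\la|^2\bl_\la$ and Taylor expanded, already produces $+\tfrac{8\pi}{\la^2}\mathcal{J}(a)$ plus boundary terms of the same order, while other $O(\la^{-2})$ pieces (the Green's-function tail energy outside $B_\iota(a)$, the cut-off annuli, the volume corrections) are not individually multiples of $\mathcal{J}(a)$ at all; assembling them into $-\tfrac{2\pi}{\la^2}\mathcal{J}(a)$ is the whole content of the lemma and remains undone in your outline. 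Second, a small but genuine slip: the density $u\cdot(u_{x_1}\wedge u_{x_2})$ does \emph{not} vanish where $u^3$ is a nonzero constant --- there it equals $u^3$ times the Jacobian of $(u^1,u^2)$ --- it vanishes pointwise where $u^3\equiv 0$, which is what actually holds for $\tilde z_{\la,a}$ outside $\supp\phi$; so your localisation of $V$ is correct, but for a different reason than the one you give.
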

\begin{remark}\label{r:energy-exp}
	The expansions in Lemma \ref{lemma:principal-term} are 
	in analogy with \cite[Proposition 5.1]{CM05} (see also \cite{T00}), where the regular part of the Green's 
	function of a planar domain appears. 
\end{remark}

As discussed in the introduction, and proven in Section \ref{subsec:Bergman}, we have that $\mathcal{J}(a) \leq \mathcal{J}_{(\Sigma,g)}$ for all $a\in \Sigma$ and some uniform constant $\mathcal{J}_{(\Sigma,g)}<0$.  

We will hence want to apply our abstract result for $y_{z_{\la,a}} = \frac{\de_\la z_{\la,a}}{\|\de_\la z_{\la,a}\|_{\dot H^1}}$ and will use that 
\beq\label{est:H-norm-dla}
\norm{\partial_\la z_{\la,a}}_{\dot H^1}\simeq \la^{-1}\quad  \text{ while } \quad \norm{\partial_\la z_{\la,a}}_{L^2}\leq C \la^{-2}(\log \la)^\half
\eeq
as a short calculation, carried out in Section \ref{subsec:bubble-prop}, shows. 
Here and in the following we use the expression $a\simeq b + O(f(\la))$ to mean that there exists $C>0$ so that $$O(f(\la)) + C^{-1}b \leq a \leq Cb + O(f(\la)).$$   

In addition, we shall prove the following control on the first and second variations of $E$. 
\begin{lemma}\label{lemma:key-estimates}
 Let $(\Sigma, g)$ be a closed oriented Riemannian surface of positive genus with Gauss curvature  $K_g \in \{-1,0\}$ and let $\ZZ$ be the set of adapted bubbles defined in Section \ref{sec:bubble-space}.
Then there exist constants $C\in \R$ and $\la_2>1$ so that for all $z_{\la,a}\in \ZZ_{\la_2}$
\beqs
%\label{est:H-f1}
\norm{\ed^2E(z_{\la,a})[\partial_\la z_{\la,a},\cdot]}_{H^{-1}}\leq C\la^{-3}(\log \la)^\half
\eeqs
and 
\beq
\label{est:H-f2}
\norm{\ed E(z_{\la,a})}_{H^{-1}}\leq C\la^{-2}(\log \la)^\half.
\eeq
\end{lemma}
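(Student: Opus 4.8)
The plan is to read both estimates as a quantitative measure of the fact that the adapted bubble $z=z_{\la,a}$ is an \emph{approximate} critical point of $E$ and that $\partial_\la z$ is an \emph{approximate} Jacobi field. The exact bubble $\bl_\la=\bl(\la\cdot)$ solves \eqref{eq:Heq}, so $\ed E(\bl_\la)=0$, and differentiating this identity in $\la$ shows that $\partial_\la\bl_\la$ lies in the kernel of $\ed^2E(\bl_\la)$. Writing $z=\bl_\la+\eta$ in the isothermal coordinates of Remark \ref{rmk:def-Fa}, the correction $\eta=(\tfrac2\la\D_y J_a(\cdot,0),\,\tfrac{2}{\la^2}j+1)$ has the two features that drive the whole argument: its first two components are harmonic in $x$ (as noted after \eqref{eq:locG-deriv}), and a short computation gives $\|\na\eta\|_{L^\infty}\lesssim\la^{-1}$ with $\|\eta\|_{\dot H^1}\simeq\la^{-1}$. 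I would first record the elementary trilinear identities for $E$ --- the symmetry \eqref{eq:symm}, the interchangeability $\D w\dot\wedge\Db v=\D v\dot\wedge\Db w$, and the divergence (null-form) structure of the Jacobian $\D z\dot\wedge\Db z$ --- together with Wente's inequality in the weak form $\big|\int_\Sigma c\cdot(\D a\dot\wedge\Db b)\big|\le C\|a\|_{\dot H^1}\|b\|_{\dot H^1}\|c\|_{\dot H^1}$, which is what lets one pass from pointwise data to the $H^{-1}=(\dot H^1)^*$ norm. All triple products are understood in the limiting sense of Remark \ref{rmk:int}.

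For \eqref{est:H-f2} I would test $\ed E(z)$ (see \eqref{eq:dE}) against $v$ with $\|v\|_{\dot H^1}\le 1$ and decompose $\Sigma$ into the bubble core, the neck $\la^{-1}\lesssim|x|\lesssim r$, the cut-off annulus where $\phi,\psi$ vary, and the exterior $\Sigma\setminus B_\iota(a)$. On the core, since $\bl_\la$ solves \eqref{eq:Heq} and the first two components of $\eta$ are harmonic, the pointwise defect $-\Dl z-2z_{x_1}\wedge z_{x_2}$ reduces to the bilinear and quadratic terms in $\eta$. The naive Wente bound for the bilinear term only gives $O(\la^{-1})$, so the crucial step is to exploit that constant vectors lie in the kernel of $\ed^2 E$ (target-translation invariance): moving the derivative onto $v$ through the null-form structure and then replacing $\eta$ by $\eta-\eta(0)$, the remaining factor vanishes at the concentration point with $|\eta(x)-\eta(0)|\lesssim\la^{-1}|x|$, and pairing it against the concentrated gradient $|\na\bl_\la|\simeq\la(1+\la^2|x|^2)^{-1}$ produces exactly the borderline integral $\int |x|^2(1+\la^2|x|^2)^{-2}\,dx\simeq\la^{-4}\log\la$. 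This is the origin both of the gain to $\la^{-2}$ and of the factor $(\log\la)^{\half}$, and it matches the $L^2$-scaling of $\partial_\la z$ in \eqref{est:H-norm-dla}. The quadratic term is $O(\la^{-2})$ by a plain Wente estimate; the exterior contributes $O(\la^{-2})$ because there $z$ has harmonic first two components and vanishing third component, so $-\Dl z\approx 0$ while $2z_{x_1}\wedge z_{x_2}=O(\la^{-2})$; and on the cut-off annulus the matching \eqref{eq:locG-deriv} of the bubble tail to $\tfrac2\la\na_y G_a$ is designed precisely so that the leading $O(\la^{-1})$ terms cancel.

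For the second-variation estimate I would use the Jacobi-field identity $\ed^2 E(\bl_\la)[\partial_\la\bl_\la,v]=0$ and, on the region where $z=\hat z_{\la,a}$, split
\[
\ed^2 E(z)[\partial_\la z,v]=\big(\ed^2 E(z)-\ed^2 E(\bl_\la)\big)[\partial_\la z,v]+\ed^2 E(\bl_\la)[\partial_\la z-\partial_\la\bl_\la,v].
\]
By \eqref{eq:d2E} the first bracket equals the single bilinear term $-2\int \eta\cdot(\D v\dot\wedge\Db\partial_\la z)$; rearranging the three slots by \eqref{eq:symm} and the $\D\dot\wedge\Db$-symmetry so that the \emph{undifferentiated} factor becomes $\partial_\la z$, and then using the sharp bound $\|\partial_\la z\|_{L^2}\le C\la^{-2}(\log\la)^{\half}$ from \eqref{est:H-norm-dla} against $\|\na\eta\|_{L^\infty}\lesssim\la^{-1}$, gives $O(\la^{-3}(\log\la)^{\half})$. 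For the second term one checks that $\partial_\la z-\partial_\la\bl_\la=\partial_\la\eta$ is again harmonic in the core (so its Dirichlet pairing with $v$ carries no bulk term) and of size $O(\la^{-2})$; subtracting its value at the concentration point as before and pairing the center-vanishing remainder against $\na\bl_\la$ reproduces the same borderline logarithmic integral, now with the extra factor $\la^{-2}$, yielding $O(\la^{-3}(\log\la)^{\half})$ once more. As a consistency check, since $\ed^2E(z)[\partial_\la z,v]=\partial_\la\big(\ed E(z)[v]\big)$, the extra power of $\la^{-1}$ relative to \eqref{est:H-f2} is exactly what one expects from differentiating the first-variation analysis in $\la$.

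The main obstacle I anticipate is not any single estimate but the bookkeeping needed to make the leading-order cancellations rigorous \emph{simultaneously} on all regions: one must verify that the careful construction of $\hat z_{\la,a}$ (the choice of $j$ and of the cut-offs $\phi,\psi$) really removes every $O(\la^{-1})$ and $O(\la^{-2})$ contribution --- in particular on the cut-off annulus, where $\bl_\la$ no longer solves \eqref{eq:Heq} and the Laplacian falls on the cut-off functions --- so that what survives is controlled by the single borderline integral above. A secondary technical point is the consistent use of the limiting interpretation of the non-absolutely-convergent triple integrals (Remark \ref{rmk:int}) when integrating by parts and when redistributing derivatives among the three slots.
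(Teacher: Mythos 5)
Your proof of the first estimate \eqref{est:H-f2} is correct and is essentially the paper's argument in disguise: the paper also reduces to the core disc (the exterior and gluing annulus contributing $O(\la^{-2})$), isolates the bilinear defect $-2\D\bl_\la\dot\wedge\Db j_\la$, and gains the crucial logarithm by renormalising the factor that ends up undifferentiated after the Jacobian integration by parts. You subtract $\eta(0)$ so that this factor vanishes at the concentration point; the paper instead writes $\D\bl_\la=\D(\bl_\la+(0,0,1))$ so that the undifferentiated factor $\bl_\la+(0,0,1)$ decays away from it. Both choices produce the same borderline integral and boundary terms that are products of two $O(\la^{-1})$ factors. One caution: the subtraction must happen \emph{under} the derivative (using $\D\eta=\D(\eta-\eta(0))$) before integrating by parts; subtracting afterwards costs the term $\eta(0)\cdot\int_{\DD_{r/2}}\D v\dot\wedge\Db\bl_\la$, a pure boundary integral of size $O(\la^{-1})$, since the third component of $\eta(0)$ is close to $1$.

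The second-variation argument, however, has a genuine gap, precisely where your route diverges from the paper's. The identity $\ed^2E(\bl_\la)[\de_\la\bl_\la,v]=0$ holds for the energy on all of $\R^2$; restricted to the core region $F_a^{-1}(\DD_{r/2})$ it fails by boundary terms on the gluing circle, e.g. $\int_{\de\DD_{r/2}}v\cdot\de_n\de_\la\bl_\la$, which are in general only $O(\la^{-2})$ because $|\D\de_\la\bl_\la|=O(\la^{-2})$ there --- a factor $\la(\log\la)^{-1/2}$ larger than the claimed bound. Errors of the same size appear in your first bracket: after rearranging the slots, the constant part $(0,0,1)$ of $\eta$ pairs with $\int_{\DD_{r/2}}\D v\dot\wedge\Db\de_\la z$, again a pure boundary integral of size $O(\la^{-2})$ (unlike in your first-variation argument, the relevant boundary factor here is $\eta$ itself, which is $O(1)$ on the circle, not $\eta-\eta(0)$); and the exterior Dirichlet contribution $\int_{\Si\sm F_a^{-1}(\DD_{r/2})}\D\de_\la z\cdot\D v$ is likewise only $O(\la^{-2})$ term by term. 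These $O(\la^{-2})$ pieces cancel in the aggregate --- they must, since the statement is true --- but your proposal estimates the brackets separately and claims each is $O(\la^{-3}(\log\la)^{1/2})$, which is false; proving the cancellation is the real work, and it is not addressed. The paper sidesteps all of this by never comparing the base points $z_\la$ and $\bl_\la$ inside $\ed^2E$: it uses the identity you relegate to a ``consistency check'', namely $\ed^2E(z_\la)[\de_\la z_\la,v]=\de_\la\big(\ed E(z_\la)[v]\big)=\int_\Si v\cdot\de_\la\big(-\Dl z_\la-\D z_\la\dot\wedge\Db z_\la\big)$, which holds on the closed surface with no boundary terms at all. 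Since the $\la$-derivative of the defect is pointwise $O(\la^{-3})$ off the core and equals $\de_\la\big(-2\D\bl_\la\dot\wedge\Db j_\la\big)+O(\la^{-3})$ on it, the only integrations by parts then occur on the core, where every boundary integrand is a product of two small factors, $|\de_\la\bl_\la||\D j_\la|+|\bl_\la+(0,0,1)||\D\de_\la j_\la|=O(\la^{-3})$, and the two bulk terms are estimated exactly as in your first-variation argument.
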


Combined, these lemmas imply that \eqref{ass:f0}-\eqref{ass:nuu} of
 Assumption \ref{ass:B} are satisfied
for 
\beq
\label{eq:fo-for-H}
y_{z_{\la,a}} = \tfrac{\de_\la z_{\la,a}}{\|\de_\la z_{\la,a}\|_{\dot H^1}}\simeq \la \partial_\la z_{\la,a}, \,\,\, \nu=2,  \,\,\, f_0(\la) =c_1 \la^{-2} \, \text{ and }  \,f_1(\la)=f_2(\la)=C\la^{-2}\log (\la)^\fr12
\eeq
and hence that all of the conditions of Assumption \ref{ass:B}
are satisfied as 
$$\frac{f_1(\la)f_2(\la) + f_2(\la)^2}{f_0(\la)} = C \la^{-2}\log \la\to 0 \text{ as } \la\to \infty.$$
We note that \eqref{est:D2E-H} also implies that  Assumption \ref{ass:C} holds as we note that the norms of $z\in \mathcal{Z}_{\la_0}$ are uniformly bounded for any $\la_0 > 1$. 

This is sufficient to apply Theorem \ref{thm:abstract-critical-points} which combined with Remark \ref{rmk:A2}  immediately implies the energy gap for critical points of the $H$-energy which we claimed in Theorem \ref{t:gap}.

To obtain the claimed \Loj-type estimates we then use that the first claim of Lemma \ref{lemma:principal-term} implies that Assumption \ref{ass:E} holds for $f_4(\la)=C\la^{-2}$.

If we wish to prove a \Loj-type estimates involving the $L^2$ norm of the gradient of $E$ and hence use $\norm{\cdot}_{*}=\norm{\cdot}_{L^2}$ then \eqref{est:H-norm-dla} implies that Assumption \ref{ass:D} holds for $f_3(\la)=\la^{-1}(\log \la)^\half$, while for the proof of Theorem \ref{thm:Loj-H-1} we will simply set $\norm{\cdot}_*=\norm{\cdot}_{\dot H^1}$ resulting in $f_3=1$.

In both cases the assumptions of Theorem \ref{thm:abstract-critical-points} are hence satisfied for $y_z:=\frac{\partial_\la z}{\norm{\partial_\la z}_{\dot H^1}}$ and 
$$(\gamma_0,\si_0)=(\gamma_4,\si_4)=(2,0) \text{ and } (\gamma_1,\si_1)=(\gamma_2,\si_2)=(2,-\thalf)\text{ and } \nu=2$$
as well as $(\gamma_3,\si_3)=(1,-\half)$, and thus $(\gamma_{1,3},\si_{1,3})=(1,-\half)$ if 
 $\norm{\cdot}_*:=\norm{\cdot}_{L^2}$, while $(\gamma_{1,3},\si_{1,3})=(\gamma_3,\si_3)=(0,0)$ if  $\norm{\cdot}_*:=\norm{\cdot}_{\dot H^1}$.

We hence obtain $L^2$-\Loj-inequalities with exponents, compare 
Remark \ref{rmk:coeff-thm-abstract},
\beqas
(\tfrac{1}{\al_1},\tfrac{\beta_1}{\al_1})&=\FF_\infty((1,\thalf),(1,0))=(1,\thalf),\\
(\al_2,\beta_2)&=\FF_0((1,0),(2,\tfrac32))=(1,0), \\
(\al_3,\beta_3) &=\FF_0((2,0),(2,1))=(2,1),
\eeqas
as claimed in Theorem \ref{thm:main} and  $H^{-1}$-\Loj-inequalities with exponents 
\beqas
(\tfrac{1}{\al_1},\tfrac{\beta_1}{\al_1})&=\FF_\infty((2,0),(1,0))=(2,0),\\
(\al_2,\beta_2)&=\FF_0((1,0),(1,\thalf))=(1,\thalf), \\
(\al_3,\beta_3)&=\FF_0((2,1),(1,0))=(1,0),\eeqas
as claimed in Theorems \ref{thm:Loj-H-1}. Note that, using the corresponding scaling of $\|\de_\la z_\la \|_{L^p}$ for $2<p<\infty$, one can also obtain \Loj-estimates involving the $L^q$-norm of $\ed E(u)$ for $1<q<2$.  

We finally remark that while Theorem \ref{thm:abstract-main} is only stated for maps $u$ with $\norm{\ed E(u)}_{*}\leq \half$, the claims of Theorem \ref{thm:main} and \ref{thm:Loj-H-1} are trivially true if $\norm{\ed E(u)}_{*}> \half$. 

\subsection{Illustration of proof of Theorem \ref{thm:abstract-main} in the specific setting of Theorem \ref{thm:main}}
\label{sec:ill}
In this setting the fact that $\norm{\ed^2E(v)-\ed^2E(w)}\leq C\norm{v-w}_{\dot H^1}$ for all $v,w\in \dot H^1$ means that, arguing as in the proof of Lemma \ref{lemma:abstract1}, we get
$$c_0\norm{w}_{\dot H^1}^2\leq C\abs{\ed E(z)[w]}+\abs{\ed E(u)[w]}+C\norm{w}_{\dot H^1}^3,$$
so by \eqref{est:H-f2}
we find that if $\norm{w}_{\dot H^1}$ is sufficiently small then
\beq
\label{est:H-w}
\norm{w}_{\dot H^1}\leq C\la^{-2}(\log \la)^{\half}+C\norm{\ed E(u)}_{H^{-1}}.\eeq
As in the proof of Lemma \ref{lemma:abstract2} we then use \eqref{eq:fo-for-H} to bound
\beqa
c_1\la^{-3}&\leq \abs{\ed E(z)[\plaz]}\leq \abs{\ed E(u)[\plaz]}+\abs{d^2E(z)[\plaz,w]}+C\norm{w}^2\norm{\plaz}_{\dot H^1}\\
&\leq \abs{\ed E(u)[\plaz]}+C\la^{-2}(\log \la)^{\half}\norm{w}_{\dot H^1}+C\la^{-1}\norm{w}_{\dot H^1}^2.
\eeqa
We hence conclude that either 
$\la^{-1}\leq C(\log \la)^{\half} \norm{w}_{\dot H^1}$ 
and thus by 
\eqref{est:H-w} 
$$\la^{-1}(\log \la)^{-\half}\leq C \norm{\ed E(u)}_{H^{-1}}\leq C \norm{\ed E(u)}_{L^2}$$
or that $\la^{-3}\leq C\abs{\ed E(u)[\plaz]}$, which by  
\eqref{est:H-norm-dla} implies that also in this case
\beq \la^{-1}(\log \la)^{-\half}\leq C \norm{\ed E(u)}_{L^2}.\label{est:la-in-extra-proof}
\eeq
If $\la\leq \norm{\ed E(u)}_{L^2}^{-1}$ and thus $(\log \la)^{\half}\leq \abs{\log\norm{\ed E(u)}_{L^2}}^\half$, this immediately gives the claimed bound \eqref{est:mainloj1} on the bubble scale, while \eqref{est:mainloj1} is trivially true if  $\la\geq \norm{\ed E(u)}_{L^2}^{-1}$.

We also see that the first term of the right hand side of \eqref{est:H-w} is lower order, scaling like $\norm{\ed E(u)}_{L^2}^2\abs{\log\norm{\ed E(u)}_{L^2}}^{3/2}$, compared with the second term and thus that we get that $\norm{w}_{\dot H^1}\leq C\norm{\ed E(u)}_{L^2}$ as claimed in \eqref{est:mainloj2}. 

Using first the energy expansion 
\eqref{eq:energy-expansion}, then that $\ed^2 E(\cdot)$ is uniformly bounded on bounded sets of $\dot H^1$ and finally \eqref{est:H-f2} we furthermore obtain
\beqa
\babs{E(u)-\tfrac{4\pi}{3}}&\leq C\la^{-2}+\babs{E(u)-E(z)}\leq C\la^{-2}+\abs{\ed E(z)[w]}+C\norm{w}_{\dot H^1}^2\\
&\leq C\la^{-2} +C\la^{-2}(\log \la)^\half \norm{w}_{\dot H^1}+C\norm{w}_{\dot H^1}^2
\eeqa
and combining this with the just obtained bounds on $\la$ and on $w$ yields the final claim that
$$\left|E(u)-\tfrac{4\pi}{3}\right|\leq C\norm{\ed E(u)}_{L^2}^2(1+\abs{\log\norm{\ed E(u)}_{L^2}}).$$

%-------------------------WENTE------------------------------------
%-------------------------WENTE------------------------------------
%-------------------------WENTE------------------------------------

\section{Relation between $H$-surfaces and Wente estimates}\label{s:Wente}
The $H$-functional is closely related to so-called Wente estimates (see \eqref{eq:West}), we therefore spend some time elucidating the connections between these two perspectives. 

It is well known (\cite[Theorem 5.2]{G98}) that given any $a,b\in \dot H^1(\Sigma)\sm\{0\}$ there is a unique $\phi=\phi_{ab} \in \dot H^1(\Si)$ solving 
\begin{equation}\label{eq:phiab}
-\Dl_g \phi = \{a,b\}_g :=  \D a\cdot_g \Db b, 	
\end{equation}
 where $\{a,b\}_g$ is the Jacobian determinant and $\Db b = (\ast_g \ed b)^\sharp$. Notice that in oriented isothermal coordinates  the metric $g$ can be written in the form $g = e^{2\p} ((\ed x^1)^2 + (\ed x^2)^2)$, $\Delta_g = e^{-2\p} \left( \partial^2_{x_1} + \partial^2_{x_2}  \right)$, 
and $\D a\cdot_g \Db b = e^{-2\p} (a_{x_1} b_{x_2} - a_{x_2} b_{x_1})$.
 
One can check this by, for example, an approximation argument coupled with the following celebrated \emph{a-priori} estimate, originally due to Wente \cite{We69}, which says that the solution to \eqref{eq:phiab} satisfies 
\begin{equation}\label{eq:West}
\|\phi_{ab}\|_{L^\infty} + \|\phi_{ab}\|_{\dot H^1} \leq C((\Sigma,g))\|a\|_{\dot H^1}\| b\|_{\dot H^1}. 	
\end{equation}
In particular for $a,b \in \dot H^1(\Sigma)$ and $c\in C^\infty\cap \dot H^1(\Si)$ 
we can estimate
\beq \label{est:Wente}
\left|\int_\Sigma c\D a \cdot \Db b\right| = \left|\int_\Sigma c \Dl \phi_{ab}\right| =\left|\int_\Sigma \D c \cdot \D \phi_{ab}\right| \leq \|c\|_{\dot H^1}\|\phi_{ab}\|_{\dot H^1} \leq C_W\|a\|_{\dot H^1}\|b\|_{\dot H^1}\|c\|_{\dot H^1}.
\eeq
In order to show that the optimal $H^1$-Wente constant is $C_W =\sqrt{\frac{3}{32\pi}}$, Ge \cite{G98} considered the \textit{Wente energy} 
$W:\dot H^1(\Si)\setminus \{0\}\times \dot H^1(\Si)\setminus \{0\} \to \R\cup\{\infty\}$ which is defined as 
\begin{equation*}%\label{eq:defWen}
W(a,b) : =  \frac{\|a\|_{\dot H^1}^2\|b\|_{\dot H^1}^2}{\|\phi_{ab}\|_{\dot H^1}^{2}}. 
\end{equation*}
Ge proved that $\inf_{a,b\in \dot H^1(\Sigma)\setminus\{0\}} W(a,b) = \frac{32\pi}{3}$ for every closed Riemann surface $\Sigma$, with $W(a,b) = \frac{32\pi}{3}$ if and only if $\Sigma \cong \Sp^2$ and $u=u(a,b)$ defined in \eqref{eq:WE} is a degree-one conformal map $u:\Sp^2\to\Sp^2\In \R^3$. Since there is a close relationship between the $H$-energy and the Wente energy, our main results allow us to obtain quantitative control on the behaviour of functions $(a,b)$ with near-optimal Wente energy, see Corollary \ref{cor:Wente} and the proceeding remark.   

To this end we first note that since the Wente energy remains invariant under rescaling of each component it suffices to consider functions 
in the unit sphere  $S_1(\dot H^1)$ of $\dot H^1(\Si)$ for which we have the following relationship between $E$ and $W$.   
\begin{lemma}\label{lem:WE}
Let $(\Sigma, g)$ be a closed oriented Riemannian surface of positive genus with Gauss curvature  $K_g \in \{-1,0\}$. Given any $(a,b)\in S_1(\dot H^1)\times S_1(\dot H^1)$ with finite Wente energy we set 
\begin{equation}\label{eq:WE}
u=\frac{W(a,b)^{\fr12}}{2}\left(a,b, \frac{\phi_{ab}}{\norm{\phi_{a,b}}_{\dot H^1}}\right)
 \in \dot H^1(\Sigma,\R^3).
\end{equation}
Then the $H$-surface energy $E$ and the Wente energy $W$, and their variations, are related by 
\beqs
W(a,b) =8 E(u) \text{ and  } \ed E(u)[w]=\tfrac14 W(a,b)^{-\fr12}\ed W(a,b)[w_1,w_2]\eeqs
for any $w=(w_1,w_2,w_3)\in \dot H^1(\Si,\R^3)$. 
In particular $u$ defined by \eqref{eq:WE} is a critical point of $E$ if and only if $(a,b)$ is a critical point of $W$. % and 
\end{lemma}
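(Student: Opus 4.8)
The plan is to verify the two displayed identities by direct computation, resting everything on the single algebraic relation underlying the Wente construction: for $c,p,q\in\dot H^1(\Sigma)$ the weak form of \eqref{eq:phiab} reads $\langle c,\phi_{pq}\rangle_{\dot H^1}=\int_\Sigma c\,\D p\cdot\Db q$, so in particular $\int_\Sigma\phi_{ab}\,\D a\cdot\Db b=\|\phi_{ab}\|_{\dot H^1}^2$. Writing $N:=\|\phi_{ab}\|_{\dot H^1}$ and using $\|a\|_{\dot H^1}=\|b\|_{\dot H^1}=1$, we have $W(a,b)=N^{-2}$, hence $W(a,b)^{1/2}=N^{-1}$ and $u=(\tfrac{a}{2N},\tfrac{b}{2N},\tfrac{\phi_{ab}}{2N^2})$ by \eqref{eq:WE}. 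Throughout I would use the antisymmetry $\D p\cdot\Db q=-\D q\cdot\Db p$ together with the integration-by-parts identity $\int_\Sigma f\,\D g\cdot\Db h=-\int_\Sigma g\,\D f\cdot\Db h$, both valid on the closed surface $\Sigma$ because the Jacobian $\D g\cdot\Db h$ has divergence structure.

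For the energy identity $W(a,b)=8E(u)$ I would treat the two pieces of \eqref{eq:defEcf} separately. The Dirichlet term splits into three contributions, each equal to $\tfrac1{4N^2}$ (using $\|a\|_{\dot H^1}^2=\|b\|_{\dot H^1}^2=1$ and $\|\phi_{ab}\|_{\dot H^1}^2=N^2$), so $\tfrac12\int_\Sigma|\D u|^2=\tfrac{3}{8N^2}$. For the cubic term, expanding the three components of $\D u\dot\wedge\Db u$ and applying the antisymmetry and integration-by-parts rules reduces each of the three summands of $\int_\Sigma u\cdot\D u\dot\wedge\Db u$ to a multiple of $\int_\Sigma\phi_{ab}\,\D a\cdot\Db b=N^2$, giving $\int_\Sigma u\cdot\D u\dot\wedge\Db u=\tfrac{3}{4N^2}$. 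Hence $E(u)=\tfrac{3}{8N^2}-\tfrac13\cdot\tfrac{3}{4N^2}=\tfrac1{8N^2}=\tfrac18W(a,b)$.

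For the variational identity I would compute both sides and match them term by term. On the $W$ side I differentiate the quotient defining $W$, using that $(a,b)\mapsto\phi_{ab}$ is bilinear, so the $s$-derivative of $\phi_{a+sh_1,b+sh_2}$ at $s=0$ is $\phi_{h_1b}+\phi_{ah_2}$; reducing the resulting inner products via the key relation expresses $\tfrac14W(a,b)^{-1/2}\ed W(a,b)[w_1,w_2]$ in terms of $\langle a,w_1\rangle_{\dot H^1}$, $\langle b,w_2\rangle_{\dot H^1}$ and the integrals $\int_\Sigma\phi_{ab}\,\D w_1\cdot\Db b$, $\int_\Sigma\phi_{ab}\,\D a\cdot\Db w_2$. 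On the $E$ side I apply \eqref{eq:dE} to $u$ in the direction $w=(w_1,w_2,w_3)$, compute the three components of $\D u\dot\wedge\Db u$, and simplify the brackets to the same integrals. The decisive point, and the step I expect to be the main obstacle to getting right, is that the coefficients conspire so that the entire $w_3$-contribution cancels: the Dirichlet piece $\langle u^3,w_3\rangle_{\dot H^1}$ is exactly balanced by the cubic piece because the normalisation in \eqref{eq:WE} forces $\tfrac1{2N^2}=2\cdot(\tfrac1{2N})^2$. This cancellation is precisely what makes $\ed E(u)$ depend only on $(w_1,w_2)$ and explains the choice of the prefactor $\tfrac{W^{1/2}}{2}$ together with the extra $\|\phi_{ab}\|_{\dot H^1}^{-1}$ in the third slot; the surviving $w_1,w_2$ terms then agree with the $W$ side, the only genuine difficulty being careful sign bookkeeping in the repeated use of antisymmetry and integration by parts.

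Finally, the critical-point equivalence is immediate: since $W(a,b)^{1/2}\neq0$, the identity just proved shows that $\ed E(u)[w]=0$ for all $w\in\dot H^1(\Sigma,\R^3)$ if and only if $\ed W(a,b)[w_1,w_2]=0$ for all $w_1,w_2\in\dot H^1(\Sigma)$, i.e. $u$ is critical for $E$ exactly when $(a,b)$ is critical for $W$ (the scale-invariance of $W$ in each argument makes the radial variations vanish automatically, so this matches criticality on $S_1(\dot H^1)\times S_1(\dot H^1)$).
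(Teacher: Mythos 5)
Your proposal is correct: all the constants check out (with $N:=\norm{\phi_{ab}}_{\dot H^1}$ one indeed gets $\tfrac12\int|\D u|^2=\tfrac{3}{8N^2}$, $\int u\cdot \D u\dot\wedge\Db u=\tfrac{3}{4N^2}$, hence $E(u)=\tfrac{1}{8N^2}=\tfrac18 W(a,b)$), the cyclic identity $\int f\,\D g\cdot\Db h=\int h\,\D f\cdot\Db g$ you rely on is valid, and the $w_3$-cancellation you single out is exactly right and is the same observation the paper makes first (namely that $\ed E(u)[(0,0,w^3)]=0$ because $u^3$ is proportional to the solution $\phi_{ab}$ of \eqref{eq:phiab}). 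Where you genuinely diverge from the paper is the energy identity: you prove $W(a,b)=8E(u)$ by expanding both terms of \eqref{eq:defEcf} directly, which forces you to evaluate the cubic integral via the Jacobian symmetries, whereas the paper never computes the cubic term at all. Instead it proves the variational identity \emph{first} (by symmetry of $W$ in $a$ and $b$ it only checks variations $(w^1,0,0)$), then notes that the scale invariance of $W$ transported through that identity gives $\ed E(u)[u]=0$, and finally invokes the elementary relation $E(u)=\tfrac16\norm{u}_{\dot H^1}^2+\tfrac13\,\ed E(u)[u]$ (i.e.\ \eqref{eq:E16}) together with $\norm{u^1}_{\dot H^1}=\norm{u^2}_{\dot H^1}=\norm{u^3}_{\dot H^1}$ to conclude $E(u)=\tfrac12\norm{u^1}_{\dot H^1}^2=\tfrac18 W(a,b)$. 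The paper's route buys economy and robustness — the constant $\tfrac18$ falls out of scaling structure rather than a computation one must not botch — while your route is more self-contained and verifies the constant by hand without needing the logical detour through the first variation; both are complete proofs, and your ordering (energy first, variation second) is the more straightforward one to audit.
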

\begin{proof} 
As $\phi_{ab}$ is a solution of \eqref{eq:phiab} 
we obtain that $\ed E(u)[(0,0,w^3)]=0$ for all $w^3\in \dot{H}^1$ directly from \eqref{eq:dE}. Due to the symmetry of $W$ in $a$ and $b$ it hence suffices to check the formula for variations of the form $w=(w_1,0,0)$ and we compute, using \eqref{eq:dE} in the last step

\beqas
\ed  W(a,b)[(w^1,0)] &=  2W\int_\Sigma \na a\cdot\na w^1-2W^2\int \phi_{ab}\na  w^1\cdot \Db b  
\\ &=
4W^{\fr12}\int_\Sigma \left[  \D u^1\cdot \D w^1 - 2w^1\D u^2\cdot \Db u^3 \right]
= 4W^{\fr12}\ed E (u) [(w^1,0,0)].
\eeqas
We note that this relation between the variations of $E$ and $W$ combined with the scaling invariance of $W$ immediately implies that 
$\ed E(u)[u]=0$ for $u$ as in \eqref{eq:WE}.
Recalling that $\|u^i\|_{\dot H^1}=\|u^j\|_{\dot H^1} \neq 0$, and in conjunction with \eqref{eq:dE} we thus get 
$$E(u) = \tfrac16 \|u\|_{\dot H^1}^2 +\tfrac13 \ed E(u)[u] =\tfrac12 \|u^1\|_{\dot H^1}^2 +\tfrac13 \ed E(u)[u] = \tfrac12 \|u^1\|_{\dot H^1}^2 =\tfrac18 W(a,b).$$
\end{proof}

This relationship immediately allows us to conclude the following from Theorem \ref{thm:main} and Remark \ref{rmk:althyp}.  

\begin{corollary}\label{cor:Wente}
Let $(\Sigma, g)$ be a closed Riemannian surface of genus larger than zero with Gauss curvature  $K_g \in \{-1,0\}$. 
Then there exist $\eps>0$ and $C<\infty$ so that the following holds: 
	Suppose $(a,b)\in S_1(\dot H^1)\times S_1(\dot H^1)$ satisfies  
	$$\left|W(a,b) - \frac{32\pi}{3} \right| <\eps.$$
	Then $u=u(a,b)$ defined by \eqref{eq:WE} is close to the set of adapted bubbles $\ZZ$ defined in Section \ref{sec:bubble-space}
\beqs %\label{est:wdist}
{\rm{dist}}_{\dot H^1}(u,\mathcal{Z}) \leq C \|\ed W(a,b)\|_{L^2},
\eeqs
the bubble scale of $z\in \mathcal{Z}$ with ${\rm{dist}}_{\dot H^1}(u,\mathcal{Z}) = \|u-z\|_{\dot H^1}$ is bounded by 
	\beqs %\label{est:wscale}
\la(z)^{-1}\leq 
C (1+|\log \|\ed W(a,b)\|_{L^2}|^\fr12)\|\ed W(a,b)\|_{L^2}
	\eeqs 
and we have
\beqs %\label{est:wenergy}
\left|W(a,b) - \frac{32\pi}{3} \right|\leq C(1+|\log \|\ed W(a,b)\|_{L^2}|) \|\ed W(a,b)\|_{L^2}^2. 
\eeqs
\end{corollary}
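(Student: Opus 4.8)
The plan is to treat this as a transfer result: everything follows by pushing the three estimates of Theorem \ref{thm:main} through the dictionary of Lemma \ref{lem:WE}, so the proof is a change of variables together with a careful comparison of the relevant $L^2$-gradient norms. First I would record what Lemma \ref{lem:WE} gives. Since $W(a,b)=8E(u)$ for $u=u(a,b)$, the hypothesis $\babs{W(a,b)-\tfrac{32\pi}{3}}<\eps$ is \emph{equivalent} to $\babs{E(u)-\tfrac{4\pi}{3}}<\eps/8$. Choosing $\eps$ so small that $\eps/8\leq\tilde\eps$, Remark \ref{rmk:althyp} guarantees that $u$ satisfies the hypotheses of Theorem \ref{thm:main}, so all three of its conclusions are available for $u$.

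Next I would establish the exact relation between the $L^2$-gradients. The second identity of Lemma \ref{lem:WE} reads $\ed E(u)[w]=\tfrac14 W(a,b)^{-\fr12}\ed W(a,b)[w_1,w_2]$ for every $w=(w_1,w_2,w_3)\in\dot H^1(\Si,\R^3)$; in particular the variation in the third slot vanishes. Passing to the $L^2$-dual norm and noting that, because $\ed E(u)[w]$ ignores $w_3$, the supremum over $\norm{w}_{L^2}=1$ is attained with $w_3=0$, I obtain the identity
\[
\norm{\ed E(u)}_{L^2}=\tfrac14 W(a,b)^{-\fr12}\norm{\ed W(a,b)}_{L^2}.
\]
Since $W(a,b)$ lies in a fixed compact interval around $\tfrac{32\pi}{3}$ once $\eps$ is small, the factor $W(a,b)^{-\fr12}$ is bounded above and below by positive constants, whence $\norm{\ed E(u)}_{L^2}\simeq\norm{\ed W(a,b)}_{L^2}$ with universal constants.

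Finally I would substitute these comparisons into the three conclusions of Theorem \ref{thm:main}. The distance bound \eqref{est:mainloj2} transfers verbatim. For the scale bound \eqref{est:mainloj1} and the gradient \Loj-inequality I would use in addition that $\abs{\log\norm{\ed E(u)}_{L^2}}\simeq\abs{\log\norm{\ed W(a,b)}_{L^2}}$ in the regime $\norm{\ed W(a,b)}_{L^2}$ small, since the multiplicative constant relating the two gradients only shifts the logarithm by a bounded amount; this yields the stated bound on $\la(z)^{-1}$, and together with $\babs{W(a,b)-\tfrac{32\pi}{3}}=8\babs{E(u)-\tfrac{4\pi}{3}}$ and $\norm{\ed E(u)}_{L^2}^2\simeq\norm{\ed W(a,b)}_{L^2}^2$ it yields the final Wente \Loj-estimate. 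For completeness I would note that when $\norm{\ed W(a,b)}_{L^2}$ is not small all three inequalities hold trivially, since under the hypotheses their left-hand sides are uniformly bounded while their right-hand sides are bounded below.

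The step I expect to require the most care is the gradient-norm comparison: one must verify that the $L^2$-operator norm of $\ed E(u)$, taken over \emph{all} of $\dot H^1(\Si,\R^3)$, is realised by test fields supported in the first two components, so that the vanishing of the third-slot variation makes the displayed relation an exact equality rather than merely a two-sided estimate. Once this is in hand, the remaining steps are bookkeeping with the logarithmic factors and the trivial large-gradient regime.
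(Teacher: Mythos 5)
Your proposal is correct and takes essentially the same route as the paper, whose proof of Corollary \ref{cor:Wente} is exactly the one-line deduction from Lemma \ref{lem:WE}, Theorem \ref{thm:main} and Remark \ref{rmk:althyp} that you spell out. The only detail the paper leaves implicit is the one you rightly flag as delicate, namely the identity $\|\ed E(u)\|_{L^2}=\tfrac14 W(a,b)^{-1/2}\|\ed W(a,b)\|_{L^2}$, which holds for precisely the reason you give: $\ed E(u)$ annihilates the third component, so the $L^2$-dual norm is realised on test fields with $w_3=0$.
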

\begin{remark}
Additionally we also obtain the following estimates in terms of $\|\ed W(a,b)\|_{H^{-1}}$ 
\beqas
\la^{-1}&\leq C\norm{\ed W(a,b)}_{H^{-1}}^\half ,\\
\dist_{\dot H^1}(u,\mathcal{Z})&\leq C\norm{\ed W(a,b)}_{H^{-1}}\abs{\log\norm{\ed W(a,b)}_{H^{-1}}}^{\half}, \\ 
\left|W(a,b) - \tfrac{32\pi}{3} \right|&\leq C\norm{\ed W(a,b)}_{H^{-1}}.
\eeqas
\end{remark}
%

%--------------------------H-ENERGY PROOFS--------------------------
%--------------------------H-ENERGY PROOFS--------------------------
%--------------------------H-ENERGY PROOFS--------------------------

\section{Analysis of the $H$-energy on $\ZZ$} 
\label{sec:proofs-of-lemmas}

In this section we give the proofs of the results that we stated in Section \ref{sec:main-proofs} and there used to prove our main results on the $H$-energy.

\subsection{Properties of the adapted bubbles} \label{subsec:bubble-prop}
We first collect some useful properties of the adapted bubbles $z_{\la,a}=\tilde z_{\la,a}-\dashint_{\Sigma} \tilde z_{\la,a}$ defined in Section \ref{sec:bubble-space} which we shall later use in the proofs of Lemmas \ref{lem:nondeg}-\ref{lemma:key-estimates}.
For this we will repeatedly work in local coordinates $x=F_a(p)$ on balls $B_\iota(a)$ as described in Remark \ref{rmk:def-Fa} where we will always assume that the same isometry $F_a$ is used in the choice of coordinates and in the definition of $z_{\la,a}$. 

Writing for short $j_{\lambda,a}(x): = \left(\frac{2}{\lambda}\D_y J_a (x,0),0\right)$ and $J_a$ the regular part of the Green's function, we recall
from \eqref{eq:zhat} that $\tilde z_{a,\la}$ is described in these local coordinates on 
 $B_\iota(a)$ by
\beqa
\hat{z}_{\lambda,a} (x)&= (\bl_{\lambda }(x) + (0,0,1)) + j_{\lambda,a}(x)  \\
& \quad +(\phi(x)-1)\left(\left(-\tfrac{2}{\lambda} \D_y G_a(x,0)+\tfrac{2}{\lambda}\D_y J_a(x,0), 0\right) + (\bl_{\lambda }(x) + (0,0,1)) \right) \\
&= \bl_{\lambda }(x) + (0,0,1) + j_{\lambda,a}(x) + O(\lambda^{-2}).\label{eq:locexp}
\eeqa
We stress that the above expansion continues to hold for all spatial derivatives, and uniformly for $x\in \mathbb{D}_{\rho}$, while the error term in the analogue expression for $\partial_\la \hat{z}_{\lambda,a}$ is of order $O(\la^{-3})$. 

As the first two components of $j_{\la,a}$ are given by derivatives of the regular part of the Green's function, and are hence harmonic, we furthermore have that
\beqs 
\Dl_x 	\hat z_{\lambda, a} (x)=\Dl_x \bl_\la (x)+O(\la^{-2})=\la^2\Dl_x \bl (\la x)+O(\la^{-2}). \label{eq:dlz}\eeqs
We also notice that
\beqs 
\dashint_{\Sigma} \tilde{z}_{\la,a} = O(\la^{-1})+ \Vol_g(\Si)^{-1} \int_{\mathbb{D}_\rho} (\pi_\la(x) + (0,0,1)) \sqrt{g}  \id x=O(\la^{-1}),\eeqs
since $\sqrt{g}\equiv 1$ respectively $\sqrt{g}= 4(1-\abs{x}^2)^{-2}$ are radially symmetric and hence the 
 first two components of $\pi_\la(x)+(0,0,1)=(\frac{2\la x}{1+\la^2\abs{x}^2},\frac{2}{1+\la^2\abs{x}^2})$ have zero average on such discs.

As our adapted bubbles are simply given in terms of the gradient of the Green's function outside of the ball $ F_a^{-1}(\DD_{2r})\subset B_\iota(a)$ we have that
\beq \label{est:z-away-from-ball}
z_{\la,a}=O(\la^{-1}), \quad \D z_{\lambda, a}=O(\la^{-1})\text{ and } \Dl 	z_{\lambda, a}\equiv 0 \text{ on }\Si\setminus F_a^{-1}(\DD_{2r}).
\eeq
Also, a short calculation shows that on the annulus where we interpolate we have
\beq \label{est:z-on-annulus}
z_{\la,a}=O(\la^{-1}), \,\, \D z_{\lambda, a}=O(\la^{-1}) \text{ and }  \Dl z_{\lambda, a}\equiv O(\la^{-2})\, \text{on $ F_a^{-1}(\DD_{2r}\setminus \DD_{r})$} .
\eeq
We note that the derivative with respect to $\la$ of each term appearing above is analogously bounded by $O(\la^{-(s+1)})$ where $s$ is the order of decay appearing in \eqref{est:z-away-from-ball} and \eqref{est:z-on-annulus}. To prove the claims on the behaviour of the first and second variations of the energy at adapted bubbles as well as the estimates on the norms of $\partial_\la z_{\la,a}$ claimed in Section \ref{sec:main-proofs} we will combine these expansions with properties of the stereographic projection $\pi$. It is hence useful to recall that 
%$$x\cdot \D\bl =\frac{2}{(1+|x|^2)^2} \left((1-|x|^2)x, -2|x|^2\right),$$
 \begin{equation*}%\label{eq:ddelta}
 \D \bl = \frac{2}{(1+|x|^2)^2} \left( \begin{array}{cc}
1-x_1^2 + x_2^2 & \,\,\,\,\,\,\,-2x_1x_2  \\
 -2x_1x_2&  \,\,\,\,\,\,\, 1+x_1^2-x_2^2 \\ 
-2x_1 & \,\,\,\,\,\,\, -2x_2
\end{array} \right), \ x\cdot \D\bl =\frac{2}{(1+|x|^2)^2} \left((1-|x|^2)x, -2|x|^2\right)
 \end{equation*}
%	and that, denoting by $g_E$ the standard Euclidean metric, 
%	 \beqa \label{eq:d2pi}
%	 \D^2 \pi &=-\fr12 |\D \bl|^2\pi g_E -\frac{2}{1+|x|^2}\left[(x_1\bl_{x_1}-x_2\bl_{x_2})(\ed x_1^2-\ed x_2^2)+2(x_2\bl_{x_1} - x_1\bl_{x_2})\ed x^1\otimes \ed x^2\right],	\nn
%	\eeqa
so 
in particular 
%$|\D \bl|^2 = \frac{8}{(1+|x|^2)^2}$, 
$0<\|\D (x\cdot \D \bl)\|_{L^2(\R^2)}<\infty$ and $|\de_\la \bl_\la |^2=|x\cdot \D \bl (\la x)|^2=\frac{4|x|^2}{(1+\la^2|x|^2)^2}$. % $|\D^2 \bl|^2=\frac{64}{(1+|x|^2)^3}$.

Combining these expressions with \eqref{eq:locexp}-\eqref{est:z-on-annulus} and the proceeding comment we obtain 
\beqa 
\|\de_\la z_{\la,a}\|_{\dot H^1}^2 &\simeq \int_{\mathbb{D}_{r}} |\D \de_\la \bl_\la|^2 + O(\la^{-4})\nn =\la^{-2}\int_{\mathbb{D}_{r}} |\D (\la x \cdot \D \bl (\la x))|^2 + O(\la^{-4})\nn\\
&= \la^{-2}\int_{\mathbb{D}_{r\la}} |\D (x\cdot \D \bl)|^2  +O(\la^{-4})\simeq \la^{-2} \nn 
\label{eq:yz}  
\eeqa
as claimed in \eqref{est:H-norm-dla}. Here and in the following we use the scaling properties of the norm $\|\D \cdot\|_{L^2}$ and the obvious estimate $|a+b|^2 \geq \fr12 |a|^2 - |b|^2$ in order to get an appropriate lower bound.

%\beqa 
%\|\de_\la z_{\la,a}\|^2 &\simeq \int_{\mathbb{D}_\rho} |\D \de_\la \bl_\la|^2 + O(\la^{-4})\nn \simeq \int_{\mathbb{D}_\rho} |\la\D^2\bl (\la x)[x,\cdot]+ \D \bl(\la x)|^2 + O(\la^{-4})\nn\\
%&\simeq \int_{\mathbb{D}_\rho} \frac{\la^2 |x|^2}{(1+\la^2|x|^2)^3} +O(\la^{-4}) \nn\simeq \la^{-2} \int_{\mathbb{D}_{\la\rho}} \frac{ |x|^2}{(1+|x|^2)^4}+O(\la^{-4})\simeq \la^{-2} \nn 
%\label{eq:yz}  
%\eeqa

Similarly, \eqref{eq:locexp}-\eqref{est:z-on-annulus} with the proceeding comment imply that
\beqas
\norm{\de_\la z_{\la,a}}_{L^2(\Si,g)}^2 &\simeq\int_{\mathbb{D}_{r}} |\de_\la \bl_\la |^2 + O(\la^{-4})
\simeq\la^{-4}\int_{\mathbb{D}_{r\la}}\frac{|x|^2}{(1+ |x|^2)^2} + O(\la^{-4}) = O(\la^{-4}\log \la). 
\eeqas
In the proof of Lemma \ref{lem:nondeg} we will furthermore use

\begin{lemma}\label{lemma:proj}
Let $(\Sigma, g)$ be a closed oriented Riemannian surface of positive genus with Gauss curvature  $K_g \in \{-1,0\}$ and let $\{z_k=z_{\la_k,a_k}\}$ be a sequence of adapted bubbles for which $\la_k\to \infty$ and let $\{v_k\}$ be a bounded sequence in $ \dot H^1(\Si)$ which is so that $v_k(F_{a_k}^{-1}(\la_k^{-1}\cdot))$ converges locally in $ H^1(\R^2)$ to some limit $v_\infty$. 
Then also the $\dot H^1$-orthogonal projections $P^{T_{z_k}\ZZ}(v_k)$ onto the tangent space of the adapted bubble set converge to the projection of the limit onto the tangent space of the bubble set $\mathcal{B}^1$ defined in \eqref{def:B1} 
 in the sense that 
$$\big(P^{T_{z_k}\ZZ}v_k\big)(F_{a_k}^{-1}(\la_k^{-1}\cdot))\to P^{T_\pi \mathcal{B}^1} v_\infty \text{ smoothly locally on } \R^2$$
while 
$$\lim_{\Lambda\to \infty}\lim_{k\to \infty} \|\na P^{T_{z_k}\ZZ}v_k\|_{L^2(\Sigma\sm B_{\Lambda\la_k^{-1}}(a))}=0.$$
\end{lemma}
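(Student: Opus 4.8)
The plan is to reduce everything to the well-understood six-dimensional Jacobi-field space $T_\pi\mathcal{B}^1$ of the standard bubble, using that in oriented isothermal coordinates the inner product $\langle\cdot,\cdot\rangle_{\dot H^1}$ becomes the Euclidean Dirichlet product (by conformal invariance of the $2$-dimensional Dirichlet energy), which is in turn invariant under the rescaling $x\mapsto\la_k^{-1}y$. First I would fix the explicit basis of $T_{z_k}\ZZ$ obtained by differentiating $z_{\la,a}$ and its $SO(3)$-orbit in the six parameters, and renormalise it to unit order: set $e_1^{(k)}:=\la_k\,\partial_\la z_k$, $e_{1+i}^{(k)}:=\la_k^{-1}\partial_{a^i}z_k$ ($i=1,2$) and $e_{3+i}^{(k)}:=X_i z_k$ ($i=1,2,3$) for a basis $X_1,X_2,X_3$ of $\mathfrak{so}(3)$ acting on the target. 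Using \eqref{est:H-norm-dla} for the scaling direction and the analogous scalings $\|\partial_{a^i}z_k\|_{\dot H^1}\simeq\la_k$, $\|X_iz_k\|_{\dot H^1}\simeq1$ (which follow from the expansion \eqref{eq:locexp} together with the chain rule through the $a$-dependence of the chart $F_a$), each $e_j^{(k)}$ has $\dot H^1$-norm bounded above and below. Writing $P^{T_{z_k}\ZZ}v_k=\sum_{j}c_j^{(k)}e_j^{(k)}$, the coefficient vector is $c^{(k)}=(G^{(k)})^{-1}\beta^{(k)}$ with Gram matrix $G^{(k)}_{ij}=\langle e_i^{(k)},e_j^{(k)}\rangle_{\dot H^1}$ and $\beta_j^{(k)}=\langle v_k,e_j^{(k)}\rangle_{\dot H^1}$, so the whole statement reduces to the behaviour of these finitely many inner products.

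Writing $S_kw:=w(F_{a_k}^{-1}(\la_k^{-1}\cdot))$, I would then establish the local convergence of the rescaled basis. The expansions \eqref{eq:locexp}--\eqref{est:z-on-annulus} show that on any fixed compact $K\subset\R^2$ one has $S_ke_1^{(k)}\to y\cdot\na\pi=:\hat f_1$, $S_ke_{1+i}^{(k)}\to-\partial_i\pi=:\hat f_{1+i}$ and $S_ke_{3+i}^{(k)}\to X_i\pi=:\hat f_{3+i}$ in $C^\infty(K)$; indeed in the core region the regular-part corrections $\tfrac2{\la}\na_yJ_a$ contribute only $O(\la_k^{-1})$, the mean value $\dashint_\Sigma\tilde z$ is a constant and so drops out of the gradient, and the singular Green's-function part sits, after rescaling, at Euclidean distance $\gtrsim r\la_k\to\infty$ and hence does not affect local limits. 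The six limits $\hat f_j$ are exactly the (normalised) generators of $T_\pi\mathcal{B}^1$.

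The technical heart is the uniform tail control, which I would prove for each $e_j^{(k)}$ and then transfer to $P^{T_{z_k}\ZZ}v_k$ via the boundedness of $c^{(k)}$. I split $\na e_j^{(k)}$ into a concentrating Jacobi-field part and a Green's-function correction. For the former, the energy on $\Sigma\setminus B_{\Lambda\la_k^{-1}}(a_k)$ equals, after rescaling, the Dirichlet energy of $\hat f_j$ on $\{|y|>\Lambda\}$, which tends to $0$ as $\Lambda\to\infty$ because $\hat f_j\in\dot H^1(\R^2)$. For the latter, \eqref{est:z-away-from-ball}--\eqref{est:z-on-annulus} give that its gradient is of size $\la_k^{-1}|x|^{-2}$ on the annulus $\la_k^{-1}\lesssim|x|\lesssim r$, so its energy on $\{|x|>\Lambda\la_k^{-1}\}$ is $o_\Lambda(1)$, while on the bulk $\{|x|\gtrsim r\}$ it is a smooth field of size $O(\la_k^{-1})$ and hence of energy $O(\la_k^{-2})$. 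Letting $k\to\infty$ and then $\Lambda\to\infty$ annihilates both contributions, which is exactly the claimed $\lim_{\Lambda\to\infty}\lim_{k\to\infty}\|\na P^{T_{z_k}\ZZ}v_k\|_{L^2(\Sigma\setminus B_{\Lambda\la_k^{-1}}(a_k))}=0$.

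Finally I would pass to the limit in the finite-dimensional linear algebra. The local convergence of the basis together with the tail bounds yields $G^{(k)}\to G^\infty$, the Gram matrix of $\hat f_1,\dots,\hat f_6$; since these six Jacobi fields are linearly independent (the bubble has no further Jacobi fields, cf.\ \cite{CM05}), $G^\infty$ is invertible, whence $(G^{(k)})^{-1}\to(G^\infty)^{-1}$. Combining the tail bounds for the $e_j^{(k)}$ with the hypothesis $S_kv_k\to v_\infty$ and the $\dot H^1$-boundedness of $\{v_k\}$ (which forces $v_\infty\in\dot H^1(\R^2)$ by lower semicontinuity) gives $\beta_j^{(k)}\to\langle v_\infty,\hat f_j\rangle_{\dot H^1(\R^2)}$, the tail parts of these integrals being uniformly small. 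Hence $c^{(k)}\to c^\infty:=(G^\infty)^{-1}\beta^\infty$ and $S_k\big(P^{T_{z_k}\ZZ}v_k\big)=\sum_jc_j^{(k)}S_ke_j^{(k)}\to\sum_jc_j^\infty\hat f_j=P^{T_\pi\mathcal{B}^1}v_\infty$ in $C^\infty_{\mathrm{loc}}(\R^2)$, as claimed. I expect the main obstacle to be the tail estimate in the third step: the adapted bubbles are built so that the bubble's far-field matches the Green's-function correction, and one must verify that although this correction carries $O(1)$ Dirichlet energy it is concentrated at scale $\la_k^{-1}$ and therefore contributes nothing once $B_{\Lambda\la_k^{-1}}(a_k)$ is removed and $\Lambda\to\infty$.
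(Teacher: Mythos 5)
Your proposal is correct and takes essentially the same approach as the paper, whose own proof is only a sketch: it observes that the renormalised parameter variations $\partial_\eps z_k^\eps/\|\partial_\eps z_k^\eps\|_{\dot H^1}$ (in the $\la$, $a$ and $SO(3)$ directions) converge to the corresponding variations of the standard bubble $\pi$ both locally smoothly and with vanishing tails, which is exactly your second and third steps. Your Gram-matrix argument simply supplies the finite-dimensional linear algebra that the paper leaves implicit.
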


The proof of this lemma follows as the above expressions of the adapted bubbles allow us to show that for any $b\in \R^2$, any $\om\in so(3)$ and any $c\in \R$ the renormalised variations $\frac{\partial_\eps z_k^\eps}{\norm{\partial_\eps z_k^\eps}_{\dot H^1}}\vert_{\eps=0}$, 
$z_k^\eps=(I+\eps\omega) z_{\la_i(1+c),F_{a_i}(\eps b)}$, converge to the corresponding variation of $\bl$ in the sense described in the above theorem.
We note that in order to define the variations with respect to $a$ we can use the pseudo canonical choice of $F_a$ described in Remark \ref{rmk:coord}.

\subsection{Definiteness of the second variation}
\label{subsec:definite}
Here we give a proof of the  uniform definiteness of the second variation of $E$ in directions orthogonal to the bubble set claimed in Lemma \ref{lem:nondeg} which assures that Assumption \ref{ass:A} holds.

The Jacobi operator $L_u:\dot H^1(\Si,\R^3)\to \dot H^1(\Si,\R^3)$ is characterised by, see \eqref{eq:d2E},
\begin{eqnarray}
\int \na L_u(w)\cdot \na v = \ed^2 E(u)[w,v]= \int_{\Sigma} \D w \cdot \D v - 2(\Db w \wedge u)\cdot \D v \nn \text{ for all } v\in  \dot H^1(\Si,\R^3)
\end{eqnarray}
and can thus be written as $L_u (w) := w + c_u(w)$ where $c_u (w)\in \dot H^1(\Sigma, \R^3)$ is defined by 
\begin{equation*} %\label{eq:PDE-cu}
-\Dl_g c_u(w) = 2{\rm div}_g(\Db w\wedge u) = 	2\Db w\dot\wedge \D u.  
\end{equation*}
We notice that Wente's estimate \eqref{eq:West} gives the existence of some $C=C(g)$ so that 
\begin{equation*}%\label{eq:cubound}
	\|c_u(w)\|_{L^\infty} + \|c_u(w)\|_{\dot H^1} \leq C \|u\|_{\dot H^1}\|w\|_{\dot H^1}
\end{equation*}
and note that if $\na u\in L^{p}$ for some $p>2$ then we also have the usual $W^{2,q}$-estimate 
$$\|c_u(w)\|_{W^{2,q}}\leq C\|w\|_{\dot H^1}, \quad \text{ for }q=\tfrac{2p}{p+2} >1 \text{ and } C=C(g,\|\D u\|_{L^p}).$$
In this case 
 $c_u :\dot H^1(\Si,\R^3) \to \dot H^1(\Si,\R^3)$ is compact and self-adjoint 
 so 
 \begin{equation}\label{eq:Jproj}
 \hat L_z(w)=w+P^{\VV_z}c_u (w)	
 \end{equation}
 admits an orthonormal basis of eigenfunctions  $\{w_k\}$ with associated eigenvalues $\mu_k \to 1$.

Finally we recall the characterisation of the kernel of the Jacobi operator of $E$ at $\bl:\R^2\to \R^3$ given in \cite[Lemma 9.2]{CM05}. We restate this result in different notation and leave it to the reader to check the details (noting that we ignore the constant Jacobi fields since we work in $\dot H^1$).
\begin{lemma}[\cite{CM05}, Lemma 9.2]\label{lem:ker} Let $L_\bl$ be the Jacobi operator of $E$ at $\bl:\R^2\to \R^3$ then
	$$\ker (L_{\bl}) = T_{\bl}\mathcal{B}^1=\text{span}\{\om\bl,  \left(a\cdot \D\bl \right)^{\top_{\dot H^1}} ,\left(x\cdot \D \bl \right)^{\top_{\dot H^1}}, \,\om \in so(3),\,a\in \mathbb{R}^2\}\In \dot H^1(\R^2,\R^3),
	$$
	where $\cdot^{\top_{\dot H^1}}$ denotes the projection onto $\dot{H}^1(\R^2,\R^3)$ and $\mathcal{B}^1$ as defined in \eqref{def:B1}.  
	Notice that the spanning set above is by no means $\dot H^1$-orthogonal. 
\end{lemma}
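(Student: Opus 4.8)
\emph{Structure of the argument.} As recorded just before the lemma, the Jacobi operator has the form $L_\bl=\mathrm{Id}+c_\bl$ with $c_\bl$ compact and self-adjoint (note $\D\bl=O(|x|^{-2})\in L^p$ for every $p>2$), so $\ker L_\bl$ is the eigenspace of the isolated eigenvalue $0$ and is in particular finite dimensional. It therefore suffices to exhibit a $6$-dimensional space of Jacobi fields and then to show $\dim\ker L_\bl\le 6$; I would prove the two inclusions in $\ker L_\bl=T_\bl\mathcal{B}^1$ separately.

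\emph{The inclusion $T_\bl\mathcal{B}^1\subseteq\ker L_\bl$.} Every element of $\mathcal{B}^1$ solves the $H$-equation \eqref{eq:Heq}, so differentiating $-\Dl u=2u_{x_1}\wedge u_{x_2}$ along a smooth curve in $\mathcal{B}^1$ through $\bl$ shows that its velocity $w$ solves the linearised equation $-\Dl w=2(w_{x_1}\wedge\bl_{x_2}+\bl_{x_1}\wedge w_{x_2})$, which is exactly $L_\bl w=0$. Taking the curves $t\mapsto e^{t\om}\bl$, $t\mapsto\bl_{1,ta}$ and $t\mapsto\bl_{e^t,0}$ yields the generators $\om\bl$, $a\cdot\D\bl$ and $x\cdot\D\bl$; these are precisely the variations $V\cdot\D\bl$ attached to the conformal Killing fields $V$ of the domain (the two translations, the dilation, the rotation about the north--south axis and the two Möbius fields, the latter three accounting for $SO(3)$ via the coincidence ``domain isometry $=$ target rotation''). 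A short computation with the explicit formulas for $\D\bl$ and $x\cdot\D\bl$ recorded before the lemma gives finite Dirichlet energy ($\D\bl=O(|x|^{-2})$, $\|\D(x\cdot\D\bl)\|_{L^2}<\infty$, and analogously for the remaining generators), so they define elements of $\dot H^1(\R^2,\R^3)$ once the finite constant limits at infinity are removed by $(\cdot)^{\top_{\dot H^1}}$; a further short check of linear independence then gives $\dim T_\bl\mathcal{B}^1=6$.

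\emph{The inclusion $\ker L_\bl\subseteq T_\bl\mathcal{B}^1$.} This is the crux. I would use that $E$, hence the quadratic form $\ed^2E(\bl)$ in \eqref{eq:d2E}, is conformally invariant in two dimensions, so along $\bl$ the problem transplants to the standard embedding $\iota:\Sp^2\hookrightarrow\R^3$, where the Jacobi operator is $SO(3)$-equivariant. Writing a variation as $w=\phi\,\nu+X$ with $\nu=\iota$ the outward unit normal and $X$ tangent to $\Sp^2$, and using the frame relations $|\bl_{x_1}|=|\bl_{x_2}|$, $\bl_{x_1}\cdot\bl_{x_2}=0$, $\bl_{x_1}\wedge\bl_{x_2}=|\bl_{x_1}|^2\bl$, the normal part is governed by the round--sphere stability operator $-\Dl_{\Sp^2}-|A|^2$ with $|A|^2\equiv2$, whose spectrum $\ell(\ell+1)-2$ vanishes only on the $\ell=1$ eigenspace (the coefficient $2$ being forced by the fact that the target translations must be Jacobi fields). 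Decomposing the components of $w$ into spherical harmonics and computing the operator mode by mode, one finds it strictly definite on every mode with $\ell\ge2$; hence the kernel is confined to $\ell\le1$ and is at most $6$-dimensional, which closes the argument.

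\emph{Main obstacle.} The difficulty lies entirely in the bookkeeping of the low modes and its interaction with the $\dot H^1$ quotient and the normal/tangential coupling induced by the cross-product terms. The three normal $\ell=1$ directions reassemble, together with their tangential partners, into the constant target translations $w\equiv c$, which are exactly the elements annihilated by passing to $\dot H^1$; discarding these and correctly counting the surviving tangential and rotational $\ell=1$ contributions is what leaves precisely the $6$-dimensional kernel $T_\bl\mathcal{B}^1$. One must also take care that $\bl$ is defined on the punctured plane, confirming that every spherical-harmonic mode corresponds to a genuine finite-energy variation and that no Jacobi field hides in the asymptotics at the puncture. As an independent check on the count, the degree-one bubble is $\pm$holomorphic and its Jacobi fields correspond to holomorphic sections of $\bl^*T\mathbb{CP}^1\cong\mathcal{O}(2)$ over $\mathbb{CP}^1$, a space of complex dimension $2\cdot1+1=3$, i.e.\ real dimension $6=\dim T_\bl\mathcal{B}^1$.
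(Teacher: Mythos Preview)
The paper does not prove this lemma; it is quoted from \cite[Lemma~9.2]{CM05} and the authors explicitly ``leave it to the reader to check the details''. Your outline---differentiating the family $\mathcal{B}^1$ for the easy inclusion, then transplanting to the round $\Sp^2$ by conformal invariance and running a spherical-harmonic analysis to bound $\dim\ker L_\bl$ from above---is the standard strategy and is essentially what is done in \cite{CM05}, so there is nothing in the paper itself to compare against beyond that citation.

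As a sketch your argument is sound, but two points would need to be filled in before it is a proof. First, the cross-product terms in the Jacobi equation $-\Delta w=2(w_{x_1}\wedge\bl_{x_2}+\bl_{x_1}\wedge w_{x_2})$ genuinely couple the normal and tangential parts of $w$, so identifying the normal piece with the CMC stability operator $-\Delta_{\Sp^2}-2$ and asserting definiteness on $\ell\ge2$ requires the explicit mode-by-mode computation you defer; that computation (together with the low-mode bookkeeping you flag in the ``Main obstacle'' paragraph) is precisely the content of \cite[Lemma~9.2]{CM05}. Second, the holomorphic-sections count at the end pertains to the Jacobi operator of the harmonic-map energy into $\Sp^2$, not directly to the $H$-energy with target $\R^3$; it is a useful consistency check on the dimension but not an independent proof.
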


\begin{proof}[Proof of Lemma \ref{lem:nondeg}]
Since the spectral properties of $z\in \ZZ$ are invariant under ambient rotations (i.e. by $R\in SO(3)$) it suffices to prove that there are no sequences $z_k=z_{\la_k,a_k}$ and $\{w_k\} \In \VV_{z_k}$ with $\norm{w_k}_{\dot H^1}=1$ so that $ \hat{L}_{z_k}w_k = \al_k w_k$ for some $|\al_k|\to 0$.
We prove the result by contradiction: given such sequences $\{z_k\}$, $\{w_k\}$ we set $c_k = c_{z_k}(w_k)$ and note that by 
 \eqref{eq:Jproj}
\begin{equation}\label{eq:rel}
\text{$w_k = -\mu_k P^{\VV_{z_k}} (c_k) = \mu_k (P^{T_{z_k} \ZZ}(c_k) - c_k) $  for  $\mu_k=\tfrac{1}{1-\al_k}\to 1$.}	
\end{equation}
We will show that re-scaled versions of $w_k$ converge to a non-trivial element of the kernel of $L_\bl$, which is simultaneously orthogonal to $T_\bl \mathcal{B}^1$ leading to a contradiction due to Lemma \ref{lem:ker}.

For $x\in \DD_{\la _k\rho}$ we consider the re-scaled functions, 
denoting by $F_k= F_{a_k}$ the isometry used implicitly to give $z_k$ the explicit form above,
\begin{eqnarray*}
\text{$\check{c}_k (x)= c_k(F_k^{-1}(\la_k^{-1}x))$, $\check{w}_k(x) = w_k(F_k^{-1}(\la_k^{-1}x))$ and $\check{z}_k=z_k(F_k^{-1}(\la_k^{-1}x))$.}
\end{eqnarray*}
By construction $\check{z}_k$ converges locally smoothly to $\bl$ on $\R^2$ and 
$\check{c}_k$ solves 
\beq \label{eq:PDE}
-\Dl \check{c}_k =2\D \check{w}_k \dot\wedge \Db \check{z}_k.\eeq
Hence $\{\check{c}_k\}$ has locally uniform bounds in $W^{2,2}_{loc}(\R^2)$ so converges (up to subsequence) strongly in $H^1_{loc}(\R^2)$ to some limit $c\in H^1(\R^2)$. Due to Lemma \ref{lemma:proj} and \eqref{eq:rel} this implies that also $\check{w}_k \to w\in H^1(\R^2)$ locally strongly in $H^1$. Furthermore by passing to the limit in \eqref{eq:PDE}, we get 
\begin{equation*}%\label{eq:cpi}
\text{$-\Dl c = 2\D \bl \dot\wedge \Db w$, so in fact $c=c_\pi (w)$.}	
\end{equation*}
Once again utilising Lemma \ref{lemma:proj} to pass to the limit in \eqref{eq:rel}, we get $w=-P^{\VV_\bl}(c_\bl (w))$, so $w\in \VV_\pi$ and $\hat{L}_\bl (w)=w+P^{\VV_\bl}(c_\bl (w)) = 0$. Since $L_\bl y = 0$ for all $y\in T_\bl \mathcal{B}^1$, and $L_\bl$ is self-adjoint, we can now conclude that $L_\bl w = 0$. Thus by Lemma \ref{lem:ker} we must have $w\equiv 0$. This will lead to a contradiction as the strong convergence of $\check{w}_k$ to $w$ on any $\DD_\Lambda$ implies that 
 $$\|\D w\|^2_{L^2(\R^2)}=\lim_{\Lambda \to \infty}\lim_{k\to \infty} \|\D\check{w}_k\|^2_{L^2(\DD_\Lambda)} = 1- \lim_{\Lambda \to \infty}\lim_{k\to \infty} \|\D w_k\|^2_{L^2(\Sigma \sm B_{\lambda_k^{-1}\Lambda}(a_k))} = 1$$ where the last equality will follow from \eqref{eq:rel} and Lemma \ref{lemma:proj}
 once we show  that
   \begin{equation}\label{eq:claim}
	\lim_{\Lambda \to \infty} \lim_{k\to \infty}\|\D c_k\|_{L^2(\Sigma \sm B_{\Lambda \lambda_k^{-1}} (a_k))}=0.
\end{equation}
 \textbf{Proof of \eqref{eq:claim}:} 
We first note that $c_k \to 0$ strongly in $H^2_{loc}(\Sigma \sm\{a\})$: this follows from the fact that (locally away from $a$) $\|\Dl c_k\|_{L^2} = O(\la_k^{-1})\|\D w_k\|_{L^2}=O(\la_k^{-1})$ and that any weak limit of $c_k$ over the whole of $\Sigma$ must have zero mean.  Thus it remains to consider $c_k$ in regions of the form $F_k(B_{\iota}(a_k) \setminus B_{\la_k^{-1}\Lambda}(a_k))\subset \DD_\rho\sm \DD_{\la_k^{-1}\Gamma}$ where $C\Gamma \geq  \Lambda$ and in the following $c_k$ is expressed in the coordinates determined by $F_k$.  
For such domains we claim that 
\begin{equation}\label{eq:annuli}
\text{$\lim_{\Gamma\to \infty}\lim_{k\to \infty} R_\Gamma(c_k)=0$, \quad where \quad $R_\Gamma(c_k)=\sup_{\la_k^{-1}\Gamma/2\leq R \leq \rho/2} \int_{\DD_{2R}\setminus \DD_R} |\D c_k|^2.$}	
\end{equation}
If this were not true then we could find a sequence $R_k \to 0$ so that $R_k\la_k\to \infty$ and $\bar{c}_k(x):=c_k(R_k^{-1} x)$ does not converge locally strongly to a constant in $H_{loc}^1(\R^2\sm \{0\})$. However, on compact subsets $K$ of $\R^2\sm \{0\}$ we have 
$\|\Dl \bar{c}_k\|_{L^2(K)}=O ((R_k\la_k)^{-1})\|\D \bar{w}_k \|_{L^2(K)}=O ((R_k\la_k)^{-1})\to 0.$ In other words $\bar{c}_k$ converges locally strongly on $\R^2\sm\{0\}$ to a harmonic function on $\R^2\sm\{0\}$ with finite Dirichlet energy i.e. a constant. Thus we must have \eqref{eq:annuli}. 

In order to finish the proof we will utilise Lorentz space estimates (see \cite[Section 3.3]{H02} for a short introduction): given \eqref{eq:annuli} it is much easier to prove \eqref{eq:claim} for the weaker quantity $\|\D c_k\|_{L^{2,\infty}}$ rather than $\|\D c_k\|_{L^2}$. Furthermore the special structure of the equation, utilising Hardy Space methods, allows one to prove a global upper bound on the stronger quantity $\|\D c_k\|_{L^{2,1}}$. One can couple these estimates by $L^{2,\infty}$-$L^{2,1}$ duality which yields the desired estimate. Such arguments are now common in the so-called neck analysis of bubble convergence but were initially introduced by Lin-Rivi\`ere (see e.g. \cite{LinR01}).      

We let $d_k\in H_0^1(\DD_\rho\setminus \DD_{\la_k^{-1}\Gamma/2})$ solve $-\Dl d_k =- \Dl c_k =2\D w_k \dot\wedge \D z_k$, so that the usual Wente estimates (see e.g. \cite[Theorem 1.3]{G98}) give 
$$\|\D d_k \|_{L^2(\DD_\rho\setminus  \DD_{\la_k^{-1}\Gamma/2})}\leq C(\|\D \pi\|_{L^2(\R^2\setminus \DD_{\Gamma/2})} + O(\la_k^{-1})).$$
As $h_k: = c_k -d_k$ is harmonic we obtain that for any $R\in [\la_k^{-1}\Gamma,\rho/3]$ and any $x\in \DD_{2R}\sm \DD_{R}$
\begin{eqnarray}
|\D h_k(x)|&\leq&  C\dashint_{\DD_{R/4}(x)} |\D h_k|\leq \frac{C}{|x|} (R_\Gamma(c_k)+\|\D \bl\|_{L^2(\R^2\setminus \DD_{\Gamma/2})} + O(\la_k^{-1})). \nn \\
 \nn\label{eq:l2infty}	
\end{eqnarray}
In particular
$\|\D h_k \|_{L^{2,\infty}(\DD_{\rho/2}\sm \DD_{\la_k^{-1}\Gamma})} \leq C(R_\Gamma(c_k)+\|\D \bl\|_{L^2(\R^2\setminus \DD_{\Gamma/2})} + O(\la_k^{-1}))$ and therefore 
$$\|\D c_k\|_{L^{2,\infty}(\DD_{\rho/2}\sm \DD_{\la_k^{-1}\Gamma})} \leq C(R_\Gamma(c_k)+\|\D \bl\|_{L^2(\R^2\setminus \DD_{\Gamma/2})} + O(\la_k^{-1})).$$ 
The stronger uniform bound of Coiffman-Lyons-Meyers-Semmes \cite{CLMS93} tell us that $\Dl c_k \in \h^1(\DD_\rho)$, the (local) Hardy space, with a uniform bound. Thus we know (using e.g. \cite[Theorem 3.3.8]{H02}) that $\|\D c_k \|_{L^{2,1}(\DD_{\rho/2})}$ is uniformly bounded. Utilising now the duality of $L^{2,\infty}$ and $L^{2,1}$ gives:
$$\|\D c_k\|^2_{L^2(\DD_{\rho/2}\setminus \DD_{\la_k^{-1}\Gamma})}\leq C\|\D c_k\|_{L^{2,\infty}(\cdot)}\|\D c_k \|_{L^{2,1}(\cdot)}  \leq C(R_\Gamma(c_k)+\|\D \bl\|_{L^2(\R^2\setminus \DD_{\Gamma/2})} + O(\la_k^{-1})),$$ 
which, combined with \eqref{eq:annuli} proves the claim \eqref{eq:claim} and thus completes the proof. \end{proof}

\subsection{Properties of the energy at adapted bubbles}\label{subsec:proofs-lemmas}
We finally give the proofs of Lemmas \ref{lemma:principal-term} and \ref{lemma:key-estimates}. As we will consider $a$ to be fixed we will drop the index $a$ in the following calculations.  

We recall that away from $F_a^{-1}(\DD_{r})$ the adapted bubble $z_\la$ and its derivatives are controlled by \eqref{est:z-away-from-ball} and \eqref{est:z-on-annulus}. The formulae \eqref{eq:dE}, \eqref{eq:d2E} for the variations of $E$ yield that for $w\in \dot H^1 (\Si)$
\begin{eqnarray} 
\ed E(z_\la)[w] =\int_{(F_a)^{-1}(\DD_{r})}w\cdot (-\Dl z_\la -\D z_\la\dot\wedge \Db z_\la)+ O(\la^{-2})\norm{w}_{L^1(\Si\setminus F_a^{-1}(\DD_{r}))},\label{eq:expansion-first-var}\\
\ed^2 E(z_\la)[\de_\la z_\la, w]=\int_{(F_a)^{-1}(\DD_{r})}w\cdot \partial_\la (-\Dl z_\la -\D z_\la\dot\wedge \Db z_\la)+ O(\la^{-3})\norm{w}_{L^1(\Si\setminus F_a^{-1}(\DD_{r}))}\label{eq:expansion-second-var}.
\end{eqnarray}
The conformal invariance of these expressions allows us to work in the coordinates $x=F_a(p)$ on $F_a^{-1}(\DD_{r})$ in which $z_\la$ is given by $\hat z_\la=\pi_\la+j_\la$ (up to an additive constant). As $\pi_\la$ itself is a solution of the $H$-surface equation while $j_\la$ is harmonic on this disc we have
 \begin{eqnarray}
	-\Dl \hat z_\la - \D \hat z_\la \dot\wedge \Db \hat z_\la %= -\Dl \bl_{\lambda} - \D \bl_\la \dot\wedge \Db \bl_\la- 2\D \bl_\la \dot\wedge \Db j_\la  +O(\la^{-2}) \\
	&=&-2\D \bl_\la \dot\wedge \Db j_\la   +O(\la^{-2}) \label{eq:ELz} \\
\de_\la (-\Dl \hat z_\la - \D \hat z_\la\dot\wedge \Db \hat z_\la ) &=& \partial_\la(
-2\D \bl_\la \dot\wedge \Db j_\la )  +O(\la^{-3}) \label{eq:ELz2}. 
\end{eqnarray}

\begin{proof}[Proof of  Lemma \ref{lemma:key-estimates}]
Let $w\in \dot H^1(\Si,\R^3)$ be any element with $\norm{w}_{\dot H^1}= 1$, in the following expressed in the coordinates determined by $F_a$. From \eqref{eq:expansion-first-var} and \eqref{eq:ELz} we get
\beqas 
|\ed E(z_\la)[w]|& = 2\left|\int_{\mathbb{D}_{r}}  w\cdot	\D (\bl_\la +(0,0,1)) \dot\wedge \Db j_\la \right| + O(\la^{-2}) \\
&\leq 2\int_{\mathbb{D}_{r}} |\bl_\la + (0,0,1)| |\D  w \dot\wedge \Db j_\la | + 2\int_{\de \mathbb{D}_{r}} | w||\bl_\la + (0,0,1)||\D j_\la| + O(\la^{-2})\\
&\leq  O(\la^{-1})  \|\bl_\la + (0,0,1)\|_{L^2(\mathbb{D}_{r})} + O(\la^{-2}) = O(\la^{-2} |\log \la|^\fr12)
\eeqas
while a combination of \eqref{eq:expansion-second-var} and \eqref{eq:ELz2} gives
\beqas
|\ed^2 E(z_\la)[\de_\la z_\la, w]| 
&= \left |2 \int_{\mathbb{D}_{r}}  w\cdot \D \de_\la \bl_\la \dot\wedge \Db j_\la +  w\cdot\D (\bl_\la + (0,0,1)) \dot\wedge \Db \de_\la j_\la \right|  + O(\la^{-3})  \\
&\leq  2\int_{\mathbb{D}_{r}} |\de_\la \bl_\la| |\D  w \dot\wedge \Db j_\la | + 2\int_{\mathbb{D}_{r}} |\bl_\la + (0,0,1)|  | \D w \dot\wedge \Db \de_\la j_\la |\\ 
 &  + 2\int_{\de \mathbb{D}_{r}} | w|\big(|\de_\la \bl_\la ||\D j_\la| + |\bl_\la + (0,0,1)||\D \de_\la j_\la|\big) + O(\la^{-3})  \\ 
 &=  O(\la^{-3} |\log\la|^{\fr12}).
\eeqas
\end{proof}

\begin{proof}[Proof of Lemma \ref{lemma:principal-term}]
As  $w=\partial_\la z_\la$ is of order $O(\la^{-2})$ on $\Si\setminus F_a^{-1}(\DD_{r})$, \eqref{eq:locexp}, \eqref{eq:expansion-first-var} and \eqref{eq:ELz} yield
\beqas
\de_\la E(z_\la)
  &=  - 2\int_{\DD_{r}} \left( \de_\la \bl_\la \cdot (\D \bl_\la \dot\wedge \Db j_\la)  + \de_\la j_\la \cdot (\D \bl_\la \dot\wedge \Db j_\la)
 \right) + O(\la^{-4}), 
\\
&=  - 2\int_{\DD_{r}} \de_\la \bl_\la \cdot (\D \bl_\la \dot\wedge \Db j_\la) +O(\la^{-4})
\eeqas
where the last step follows as the $L^1$-norm of the third component of $\na \pi_\la$ decays like  $O(\la^{-1})$, while $j_\la$ and $\partial_\la j_\la$ are smooth functions of order $O(\la^{-1})$ respectively $O(\la^{-2})$ with vanishing third component.
 Integrating by parts, using that $|j_\la||\de_\la \bl_\la||\D \bl_\la|=O(\la^{-4})$ on  $\partial \DD_{r}$, hence gives 
\beqas
	\partial_\la E (z_\la)
&=    2\int_{\DD_{r}(a)}  \Db \de_\la\bl_\la \cdot  (\D\bl_\la\wedge j_\la)+O(\la^{-4})=-2\int_{\DD_{r}(a)}  j_\la  \cdot ( \D\bl_\la\dot \wedge\Db \de_\la\bl_\la)+O(\la^{-4})\\& =  - \int_{\DD_{r}(a)} j_\la \cdot \de_\la (\D\bl_\la\dot\wedge \Db \bl_\la)+O(\la^{-4})\\
& =   - \de_{x_1} j_\la (0)\cdot \int_{\mathbb{D}_{r}} x_1  \de_\la (\D\bl_\la\dot\wedge \Db \bl_\la) - \de_{x_2} j_\la (0)\cdot \int_{\mathbb{D}_{r}} x_2  \de_\la (\D\bl_\la\dot\wedge \Db \bl_\la)  \\
	&\quad -    j_\la(0)\cdot \int_{\mathbb{D}_{r}} \de_\la (\D\bl_\la\dot\wedge \Db \bl_\la) +O \left( \la^{-1} \int_{\mathbb{D}_{r}} |x|^2 \abs{ \de_\la (\D\bl_\la\dot\wedge \Db \bl_\la)} 
	\right)  +O(\la^{-4})
\eeqas
where we note that the error term obtained in the Taylor expansion is also of order $O(\la^{-4})$ as a short calculation shows.

As $\pi:\R^2\to \Sp^2$ is a conformal harmonic map we have $\D\bl\dot\wedge \Db\bl =-\Delta \bl=\abs{\na \pi}^2 \pi$ and thus 
$$
  \D\bl_\la\dot\wedge \Db \bl_\la = \tfrac{8 \la^2}{(1+\la^2 |x|^2)^2}\pi_\la= \tfrac{8 \la^2}{(1+\la^2 |x|^2)^3}(2\la x, 1-\la^2\abs{x}^2)^T.$$
The symmetries of this term, combined with $j_\la^3=0$, hence imply that 
\beqas
	\partial_\la E (z_\la)
	&=  - \de_{x_1} j_\la^1 (0)\cdot \int_{\mathbb{D}_{r}} x_1 \de_\la \big( \tfrac{16 \la^3x_1}{(1+\la^2 |x|^2)^3}\big)- \de_{x_2} j_\la^2 (0)\cdot \int_{\mathbb{D}_{r}} x_2 \de_\la \big( \tfrac{16 \la^3x_2}{(1+\la^2 |x|^2)^3}\big) %\\
	%&\quad +j_\la^3(0)\de_\la\int_{\de \DD_{r}}  \pl{\bl_\la^3}{n} 
	+O(\la^{-4}).
\eeqas
%where the penultimate term is also $O(\la^{-4})$. 
For $i=1,2$ we have
\beqas
\int_{\mathbb{D}_{r}} x_i \de_\la \big( \tfrac{16 \la^3x_i}{(1+\la^2 |x|^2)^3}\big)=48 \int_{\mathbb{D}_{r}}\tfrac{\la^2x_i^2(1-\la^2\abs{x}^2)}{(1+\la^2 |x|^2)^4} dx=\frac{24}{\la^2} \int_{\mathbb{D}_{\lam r}}\tfrac{|x|^2 \left(1-|x|^2 \right)}{\left(1+|x|^2\right)^4} dx=   - \frac{4\pi}{ \lam^2} +  O(\lam^{-4}) 
\eeqas
 and we recall from \eqref{eq:locG} that away from $x=0$ we can write 
$j_\lam^i(x)=2\la^{-1}\partial_{y_i} J_a(x, 0)=2\la^{-1}(\partial_{y_i} G_a(x, 0)-\partial_{x_i}\log\abs{x}).$  As 
$\log\abs{x}$ is harmonic away from $0$ we have 
$$\partial_{x_1}j_\lam^1(0)+\partial_{x_2}j_\lam^2(0)=2\la^{-1}\lim_{x\to 0} (\partial_{y_1}\partial_{x_1}+\partial_{y_2}\partial_{x_2}) G_a(x,0)=2\la^{-1}\mathcal{J}(a).
$$
and the claim that  $\ed E (z_\lam)[\de_\la z] = \frac{8\pi}{\lam^3} \mathcal{J}(a)+O(\la^{-4})$ follows.\end{proof}

%--------------------------------BERGMAN------------------------------
%--------------------------------BERGMAN------------------------------
%--------------------------------BERGMAN------------------------------

\section{Analysis of principal term using Bergman's kernel}
\label{subsec:Bergman}

In this section we establish our claim that $\mathcal{J}(a):=\lim_{x\to 0}(\partial_{y_1}\partial_{x_1}+\partial_{y_2}\partial_{x_2}) G_a(x,0)<0$
where $G_a$ represents the Green's function in the coordinates centred at $a$, described in Remark \ref{rmk:def-Fa}. 

For flat tori with unit area, by using translation-invariance,  
for any $a\in \Sigma$ we have that 
$
G_a(x,y)= G_a(0,x-y) = :\hat{G}(x-y). 
$
Therefore, for $x \neq y$
$$
\partial_{x_1} \partial_{y_1} G_a(x,y)  + \partial_{x_2} \partial_{y_2} G_a(x,y) =   - \Delta \hat G (x-y) = - 2\pi, 
$$
giving $\mathcal{J}(a) = -2\pi$. 

\

For higher genus surfaces $\Sigma$ we will show that $\mathcal{J}$ is determined by the diagonal  
of {\em Bergman's kernel}: this is done with a rather easy adaptation of a result in \cite{Schiffer46}. 

Let us first recall some basic facts and notation of complex-analytic nature. Denote by $\mathcal{H}_\Sigma$ the bundle of holomorphic 
one-forms $\phi$ (for the complex structure induced by the  metric), locally expressed in complex coordinates as $\ti\phi(z) \ed z$, with $\ti\phi$ holomorphic 
in a domain of the complex plane. We also denote by $\partial_z, \overline{\partial}_z$ the 
holomorphic and anti-holomorphic exterior differentials and recall that, with this notation, 
if $\{z\}$ is a local complex coordinate, if $f$ is a function and if $\alpha,\beta$ are differential forms locally given by $\alpha = \ti\alpha(z) \ed z$, $\beta = \ti\beta(z) \ed \zb$ then 
\begin{equation*}
\db_{z} f = \pl{f}{\zb}\ed \zb ;\qquad \de_{z}f = \pl{f}{z}\ed z;  \qquad  \db_{z} \alpha =  \frac{\partial \ti\alpha}{\partial \zb} \ed \zb \wedge \ed z; 
\qquad {\partial}_z \beta =  \frac{\partial \ti\beta}{\partial z} \ed z \wedge \ed \zb . 
\end{equation*}
The above operators are independent of the choice of complex coordinate $z$, and hence 
are well defined. The usual exterior differential splits as $d = \partial_z + \overline{\partial}_z$, and the following relations hold 
$$
   \partial_z^2 = \overline{\partial}_z^2 = \partial_z \overline{\partial}_z + \overline{\partial}_z \partial_z = 0. 
$$
When taking complex differentials with respect to two variables $z$ and $\zeta$, we will always 
mean taking the tensor products of the corresponding complex-valued forms.

\

We recall the following definition of the Bergman Kernel which {\em projects} all 1-forms of class $L^2(\Sigma)$ 
onto sections of $\mathcal{H}_\Sigma$.

\begin{definition}\label{def:Bergman}
	The {\em Bergman Kernel} $B_\Sigma$ of $\mathcal{H}_\Sigma$ is the unique section of the bundle 
	$\mathcal{H}_\Sigma \otimes \overline{\mathcal{H}}_\Sigma \to \Sigma \times \Sigma$ 
	satisfying  
	\begin{description}
		\item {\bf a)} $B_\Sigma(z,\zeta)   = \overline{B_\Sigma(\zeta,z)};$
		\item {\bf b)} $\overline{\partial}_z B_\Sigma(z,\zeta)   = 0;$
		\item{\bf c)} for every section $s$ of $\mathcal{H}_\Sigma$ one has the \emph{reproducing formula}, integrating in $\zeta$
		\begin{equation*}
		s(z) = i \int_{\Sigma}  s(\zeta) \wedge B_\Sigma(z,\zeta). 
		\end{equation*}
	\end{description}
\end{definition}
If $\{\phi_j\}_j$ is an $L^2$-orthonormal basis for the sections in $\mathcal{H}_\Sigma$, which has complex dimension equal to the genus of $\Sigma$, then the Bergman Kernel is given by 
\begin{equation}\label{eq:Bergman-basis}
  B_\Sigma(z,\zeta) = \sum_j \phi_j(z)  \otimes \overline{\phi_j(\zeta)}.
\end{equation}
See Section 1.4.1 in \cite{Krantz} for a more general discussion of Bergman Kernels. 
We will show
\begin{prop}\label{p:Bergman-Green}  
Let $(\Sigma, g)$ be a closed Riemannian surface let $G$ be the Green's function defined in \eqref{eq:Grn} and $B$ denote the Bergman Kernel. For all $z \neq \zeta$, the following identity holds
	\begin{equation*}
	B_\Sigma(z,\zeta)  =-\frac{1}{\pi} \bar\de_\zeta \de_z G (z,\zeta). 
	\end{equation*}	
\end{prop}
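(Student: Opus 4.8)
The plan is to show that the candidate $\tilde B(z,\zeta):=-\frac1\pi\db_\zeta\de_z G(z,\zeta)$ satisfies the three properties (a)--(c) characterising the Bergman kernel in Definition \ref{def:Bergman}, so that the uniqueness asserted there forces $\tilde B=B_\Sigma$; this is the adaptation of \cite{Schiffer46} alluded to in the statement. Throughout I would work in a local holomorphic coordinate in which $G(z,\zeta)=-\log|z-\zeta|+(\text{smooth})$ near the diagonal, and repeatedly use that $\de_z$ and $\db_\zeta$ commute, since they differentiate in different variables.

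First I would check that $\tilde B$ is a smooth section of $\mathcal{H}_\Sigma\otimes\overline{\mathcal{H}}_\Sigma$, which simultaneously gives property (b). Away from the diagonal the defining equation \eqref{eq:Grn} reduces to $\Dl_g G=\tfrac{2\pi}{\Vol_g(\Sigma)}$, so in isothermal coordinates $\de_z\db_z G=\tfrac14\Dl_E G=\tfrac{\pi}{2\Vol_g(\Sigma)}e^{2\varrho(z)}$ depends on $z$ alone. Hence $\db_z(\de_z\db_\zeta G)=\db_\zeta(\db_z\de_z G)=0$ and, symmetrically (since $\de_\zeta\db_\zeta G$ depends only on $\zeta$), $\de_\zeta(\de_z\db_\zeta G)=0$; that is, $\tilde B$ is holomorphic in $z$ and antiholomorphic in $\zeta$. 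Moreover $\de_z\db_\zeta(-\log|z-\zeta|)=0$ off the diagonal, so the singular part of $G$ drops out of the mixed derivative and $\tilde B=-\tfrac1\pi\de_z\db_\zeta J$ extends smoothly across the diagonal. Property (a), the Hermitian symmetry $\tilde B(z,\zeta)=\overline{\tilde B(\zeta,z)}$, then follows from the reality and symmetry $G(z,\zeta)=G(\zeta,z)$ together with the fact that complex conjugation interchanges $\de$ and $\db$.

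The heart of the argument is the reproducing formula (c). Fix $z$ and let $s=\tilde s\, d\zeta$ be a holomorphic one-form. Since $\db_\zeta\tilde s=0$ we have $\tilde s\,\db_\zeta(\de_z G)=\db_\zeta(\tilde s\,\de_z G)$, and therefore the integrand can be written as an exact form in $\zeta$: $\tilde s\,\de_z\db_\zeta G\, d\zeta\wedge d\bar\zeta=-d(\tilde s\,\de_z G\, d\zeta)$. Applying Stokes' theorem on $\Sigma\setminus D_\eps(z)$ and letting $\eps\to0$, the only surviving term comes from the simple pole $\de_z G\sim-\tfrac{1}{2(z-\zeta)}$, and a direct residue computation on $\partial D_\eps(z)$ gives $\int_\Sigma\tilde s\,\de_z\db_\zeta G\, d\zeta\wedge d\bar\zeta=i\pi\,\tilde s(z)$. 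Consequently $i\int_\Sigma s\wedge\tilde B=-\tfrac{i}{\pi}\cdot i\pi\,\tilde s(z)\, dz=s(z)$, which is exactly property (c). With (a)--(c) in hand, uniqueness of the Bergman kernel yields the claimed identity.

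I expect the residue bookkeeping in (c) to be the main obstacle, less for its algebra than for its logic. The tempting move of pulling $\de_z$ outside the $\zeta$-integral is invalid, because the singularity of $\db_\zeta G$ at $\zeta=z$ moves with $z$; carrying it out would spuriously give zero. The correct route is to integrate by parts in $\zeta$ first, keeping $\de_z G$ intact, so that its $\tfrac{1}{z-\zeta}$ pole produces the delta-type boundary residue that reproduces $s(z)$. The remaining care is routine: confirming that both the regular part of $G$ and the $\log$ singularity contribute nothing in the limit, and tracking the orientation convention $\partial(\Sigma\setminus D_\eps(z))=-\partial D_\eps(z)$ together with the normalisation $\tfrac{i}{2}d\zeta\wedge d\bar\zeta>0$ so that the constant $-\tfrac1\pi$ emerges correctly.
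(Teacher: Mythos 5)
Your proposal is correct and takes essentially the same route as the paper: you verify properties (a)--(c) of Definition \ref{def:Bergman} for the candidate $-\frac{1}{\pi}\bar\de_\zeta \de_z G$ — antiholomorphicity/holomorphicity from the Green's function equation off the diagonal, smooth extension across the diagonal because the mixed derivative annihilates the $\log$ singularity, and the reproducing formula via Stokes' theorem together with the residue of the $\frac{1}{2(\zeta-z)}$ pole of $\de_z G$ — and then invoke uniqueness. This is precisely the structure of the paper's proof in Section \ref{subsec:Bergman}, including the same integration-by-parts-in-$\zeta$-first order of operations that you correctly identify as the key logical point.
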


\begin{remark}\label{rmk:Jsign}
Given a point $a\in \Sigma$ we can simply use $z=x^1 + ix^2$, $\zeta = y^1+iy^2$ as local coordinates near $a$ introduced in Remark \ref{rmk:def-Fa}. Letting $\{\phi_j\}$ be an $L^2$-orthonormal basis of sections of $\h_\Sigma$ we can locally write $\phi_j = \ti\phi_j \ed z$. The proposition above coupled with \eqref{eq:Bergman-basis} now imply that 
$$ \mathcal{J}(a) = 4 \lim_{x\to 0} {\rm Re}\left(\ppl{G_a}{\bar\zeta}{z}(x,0)\right) = -4\pi \sum_j |\ti\phi_j(0)|^2,$$
for $\mathcal{J}$ defined in \eqref{eq:Jdef}. When $(\Sigma,g)$ is a flat torus of unit area we have that $|\ed z|_g  =2^{\frac{1}{2}}$, and since the only holomorphic one-forms are of the form $\phi = \ti\phi \id z$ for $\ti\phi\in \C$ constant, in this case we conclude that $\mathcal{J}(a)=-4\pi |\ti \phi|^2 = -2\pi |\phi|^2_g = -2\pi$ as above. On hyperbolic surfaces $(\Sigma,g)$ of constant curvature $K_g\equiv -1$, we have that $|\ed z(0)|_g=2^{-\frac12}$ in our local coordinates. Thus we obtain 
$$\mathcal{J}(a)=-8\pi \sum_{j} |\phi_j(a)|_g^2$$ 
which is strictly negative for all $a$ by Riemann-Roch (see the introduction).\end{remark}
 
\begin{proof}

For $z\neq \zeta$ we define $\mathfrak{G}(z,\zeta) = \bar\de_\zeta \de_z G(z,\zeta)$. By the symmetries of the Green's function $\mathfrak{G}$ satisfies condition $a)$ off the diagonal. Furthermore again when $z\neq \zeta$, using the definition of Green's functions \eqref{eq:Grn} as well as that
$ \Delta_g^\zeta \varphi \, dV_g = - 2 i \partial_\zeta \overline{\partial}_\zeta \varphi$
we have
$$\de_\zeta \mathfrak{G} (z,\zeta) =-\frac{1}{2i} \de_z \Dl_g^\zeta G (z,\zeta) \ed V_g =\frac{1}{i}\de_z \left(\frac{\pi}{Vol_g(\Sigma)}\right) \ed V_g=0.$$
By part $a)$ thus $\bar\de_z \mathfrak{G} (z,\zeta) =0$ off the diagonal.

Given a point $z_0 \in \Sigma$, we can simply use $z=x^1 + ix^2$, $\zeta = y^1+iy^2$ for local coordinates near $z_0$ introduced in Remark \ref{rmk:def-Fa} (for the corresponding constant curvature metric). In these coordinates, whenever $z\neq \zeta$, $G_{z_0}(z,\zeta) = -\log |z-\zeta| + J_{z_0}(z,\zeta)$ 
as in \eqref{eq:locG}. Thus defining $\mathcal{G}(z,\zeta) = \de_z G(z,\zeta)$ we can locally express, for $\mathcal{G}(z,\zeta) = \tilde{\mathcal{G}}(z,\zeta)\ed z$, 
\beq \label{eq:localG}
  \tilde{\mathcal{G}}(z,\zeta)  = \frac{1}{2(\zeta -z)} +  \pl{J_{z_0}}{z}(z,\zeta).
\eeq 
Thus in our local coordinates, when $z\neq \zeta$, we have $\mathfrak{G}(z,\zeta)= \ppl{J_{z_0}}{\bar\zeta}{z}(z,\zeta)\ed z\otimes \ed \bar\zeta$ which now clearly extends smoothly to the diagonal. 

\

It remains to show that $-\frac{1}{\pi}\mathfrak{G}$ satisfies the reproducing formula $c)$. Considering a holomorphic one-form $s\in \mathcal{H}_\Sigma(\zeta)$ we have, for $z\neq \zeta$
$$
 d_\zeta (s \otimes \mathcal{G}(z,\zeta) )  =- s \wedge \bar\de_\zeta \mathcal{G} (z,\zeta)= - s  \wedge 
 \mathfrak{G}(z,\zeta).  
$$
By Stokes' theorem, integrating in $\zeta$ and locally writing $s=\ti{s}(\zeta) \ed \zeta$ for some holomorphic $\ti{s}(\zeta)$ 
\begin{eqnarray*}
\int_{\Sigma} s\wedge  \mathfrak{G}(z_0,\zeta)   &=& \lim_{\eps \to 0} \int_{\Sigma \setminus F_{z_0}^{-1}(\DD_\eps)} s \wedge 
 \mathfrak{G}(z_0,\zeta)   =  
 -  \lim_{\eps \to 0}\int_{\Sigma \setminus F_{z_0}^{-1}(\DD_\eps)}  d_\zeta (s\otimes \mathcal{G}(z_0,\zeta)  )  \\ 
 & = &  \lim_{\eps \to 0}\left( \int_{\partial \DD_\eps} \ti{s}(\zeta) \tilde{\mathcal{G}}(0,\zeta)  d \zeta \right) \ed z = i\pi \ti{s}(0)\ed z = i\pi s(z_0)  
\end{eqnarray*}
where we have used \eqref{eq:localG} to obtain the final equality.  

\end{proof}
%----------------------------------------APPENDIX---------------------
%----------------------------------------APPENDIX---------------------
%----------------------------------------APPENDIX---------------------

\appendix
\section{Palais-Smale sequences for the $H$-energy} 
\label{sec:PS}
 
Here we state a slight modification of a theorem of Brezis-Coron \cite[Theorem 1]{BC85} concerning $\dot H^1$-bubble convergence of Palais-Smale sequences of the $H$-functional $E$. Recall that a Palais-Smale sequence $\{u_k\}\In \dot H^1$ is a sequence with uniformly bounded energy for which $\ed E(u_k)\to 0$ in $H^{-1}$. We note that any Palais-Smale sequence is uniformly bounded in $\dot H^1(\Sigma,\R^3)$ since
\beq\label{eq:E16}
E(u) = \frac16 \|u\|_{\dot H^1}^2 + \frac13\ed E(u)[u]
\eeq
holds for all $u$. We first recall from \cite[Lemma A.1]{BC85} that, up to constants, $\om \in L^1_{loc}(\R^2, \R^3)$ is a solution of
\begin{equation}\label{eq:om}
\text{$- \Dl \om = \D \om \dot\wedge \Db \om$,\quad on $\R^2$ with \quad $0<\int_{\R^2} |\D \om |^2 < \infty$}
\end{equation}
if and only if $\om(z) = \bl \left(\frac{P(z)}{Q(z)}\right)$ for some non-constant co-prime complex polynomials $P,Q$ and where $\bl$ is as defined in \eqref{eq:bubblePi}. Note that $\int_{\R^2} |\D \om |^2= 8\pi \max\{\deg(P),\deg(Q)\}\geq 8\pi$.

\begin{theorem}[cf \cite{BC85}, Theorem 1]\label{thm:PS}
Let $(\Sigma, g)$ be a closed oriented Riemannian surface with a metric of constant curvature. Suppose that $\{u_k\}\In \dot H^1(\Sigma,\R^3)$ is a Palais-Smale sequence for $E$. 
Then a subsequence converges weakly in $\dot H^1(\Sigma,\R^3)$ to some $u$ solving \eqref{eq:Heq} and there exist $L\in \N$,  $a_k^l \to a^l\in \Sigma$, $r_k^l\to 0$, associated solutions $\om^l$ to \eqref{eq:om} and choices of isometry $F_k^l$ as in Remark \ref{rmk:def-Fa} for $l=1,\dots,L, k\in \N$, with 
$$\text{$\left\| u_k -  u - \sum_{l=1}^L  \phi^l_k(\cdot) \left( \om^l((r_k^l)^{-1}F_k^l(\cdot)) - 
\om^l(\infty) \right) \right\|_{H^1(\Sigma)} \to 0$,}$$
where $\phi^l_k \in C^\infty(\Sigma)$ is defined by $\phi^l_k (p) =\phi(F^l_k(p))$ on $\spt(\phi^l_k)\In B_\iota (a^l_k)$ for some fixed $\phi\in C_c^\infty(\mathbb{D}_{\rho},[0,1])$ with $\phi\equiv 1$ on $\mathbb{D}_{\rho/2}$ and $\iota$, $\rho$ are defined as in Remark \ref{rmk:def-Fa}.

\end{theorem}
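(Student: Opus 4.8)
The plan is to adapt the concentration--compactness argument of Brezis--Coron \cite{BC85} to the closed-surface setting, the only genuinely new ingredients being the use of the isothermal charts $F_a$ from Remark \ref{rmk:def-Fa} in place of a fixed planar domain and the mean-zero normalisation built into $\dot H^1$. First I would record that \eqref{eq:E16} together with the uniform energy bound forces $\{u_k\}$ to be bounded in $\dot H^1(\Sigma,\R^3)$, so that after passing to a subsequence $u_k\wtu u$ weakly in $\dot H^1$ and the energy densities $|\D u_k|^2\id V_g$ converge weakly-$\ast$ to a finite measure $\nu=|\D u|^2\id V_g+\sum_{l}m_l\delta_{a^l}$. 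The first task is to show that the weak limit $u$ solves \eqref{eq:Heq}: writing the Palais--Smale condition as $-\Dl u_k-2(u_k)_{x_1}\wedge(u_k)_{x_2}\to 0$ in $H^{-1}$ and testing against smooth $\varphi$, the Jacobian nonlinearity passes to the limit by the weak continuity of Jacobians (after one integration by parts, $u_k\to u$ strongly in $L^2$ by Rellich while $\D u_k\wtu \D u$), so that $u$ is a weak, hence by Wente regularity \cite{We69} smooth, solution of the $H$-equation on all of $\Sigma$.

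Next I would carry out the $\eps$-regularity and concentration analysis. There is $\eps_0>0$ so that whenever $\int_{B_r(p)}|\D u_k|^2<\eps_0$ one obtains uniform $W^{2,q}_{loc}$ bounds on a smaller ball, again through the Wente estimate \eqref{eq:West} applied to the equation; consequently the concentration set $S:=\{a\in\Sigma:\nu(\{a\})\geq\eps_0\}$ is finite and $u_k\to u$ strongly in $H^1_{loc}(\Sigma\sm S)$. At each $a^l\in S$ I would select a centre $a_k^l\to a^l$ and a scale $r_k^l\to 0$ by the usual point-picking procedure (fixing the rescaled energy on the unit ball to a small threshold), working in the chart $F_k^l=F_{a_k^l}$, and show that the rescaled maps $y\mapsto u_k\big((F_k^l)^{-1}(r_k^l y)\big)$ converge in $H^1_{loc}(\R^2)$ to a non-constant finite-energy solution $\om^l$ of \eqref{eq:om}; by \cite[Lemma A.1]{BC85} (recalled above \eqref{eq:om}) such $\om^l$ is $\bl(P/Q)$, with $\int_{\R^2}|\D\om^l|^2\geq 8\pi$.

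I would then set up the inductive extraction of bubbles: after subtracting the weak limit and the first bubble, the remainder is again a Palais--Smale-type sequence carrying strictly less energy, and one repeats the point-picking; the uniform lower bound $\int|\D\om^l|^2\geq 8\pi$ together with the energy bound forces termination after $L\leq\lim_k\|u_k\|_{\dot H^1}^2/(8\pi)$ steps. The decisive and hardest step is the \emph{no-neck-energy} estimate: once $u$ and all the $\om^l$ have been subtracted one must show that no Dirichlet energy is left in the annular neck regions interpolating between the various scales, which is precisely what upgrades the weak decomposition to the strong $H^1$-convergence asserted in the statement. Here I would exploit the div-curl (Jacobian) structure of the right-hand side of \eqref{eq:Heq} exactly as in the proof of Lemma \ref{lem:nondeg}: controlling the relevant gradient in the weak Lorentz norm $L^{2,\infty}$ over dyadic annuli, using the global Hardy-space bound of Coifman--Lions--Meyer--Semmes \cite{CLMS93} to bound $\|\D(\cdot)\|_{L^{2,1}}$, and closing by $L^{2,\infty}$--$L^{2,1}$ duality in the spirit of Lin--Rivi\`ere \cite{LinR01} (see also \cite{H02}).

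Finally, assembling the pieces and accounting for the cut-offs $\phi_k^l$ and the far-field constants $\om^l(\infty)$ (which enter because we work in $\dot H^1$ with each bubble translated so that its value at infinity is subtracted) yields the stated strong convergence. I expect the neck analysis to be the main obstacle; the remaining steps are standard once the closed-surface charts $F_a$ are fixed and should follow Brezis--Coron \cite{BC85} essentially verbatim.
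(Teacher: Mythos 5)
Your proposal reaches the right statement, but by a genuinely different architecture than the paper, and it postpones to a ``plan'' exactly the step that the paper's route is designed to avoid. The paper follows \cite{BC85} closely: (i) first reduce to Palais--Smale sequences that are uniformly bounded in $L^\infty$, by subtracting the Riesz representative $g_k$ of $\ed E(u_k)$ (so $\|g_k\|_{\dot H^1}\to 0$, and $v_k=u_k-g_k$ solves $-\Dl v_k = \D v_k\dot\wedge\Db v_k + 2\D v_k\dot\wedge\Db g_k+\D g_k\dot\wedge\Db g_k$, whose right-hand side is the exact Jacobian $\D u_k\dot\wedge\Db u_k$, whence Wente's estimate \eqref{eq:West} gives the $L^\infty$ bound); (ii) a dichotomy (Lemma \ref{lem:BC1}): either the sequence converges strongly in $H^1$, or a single bubble can be extracted by rescaling; (iii) a subtraction lemma (Lemma \ref{lem:BC2}): removing that one glued-in bubble produces again an $L^\infty$-bounded Palais--Smale sequence with the same weak limit and with Dirichlet energy decreased by $\|\D\om\|_{L^2(\R^2)}^2+o(1)$. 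Iterating (ii)--(iii), with the $8\pi$ quantum per bubble forcing termination, yields the strong decomposition with \emph{no separate neck analysis at all}: the energy accounting is entirely contained in the subtraction lemma. Your route --- $\eps$-regularity, finite concentration set, point-picking, then a stand-alone no-neck-energy estimate via $L^{2,\infty}$--$L^{2,1}$ duality, \cite{CLMS93} and \cite{LinR01} --- is the concentration-compactness approach; it can plausibly be completed for this equation (the paper itself runs a Lorentz-space neck argument of this type in the proof of Lemma \ref{lem:nondeg}, but there for an exact Jacobian right-hand side), yet it is strictly harder than what the paper does, and you yourself flag it as the main obstacle.

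Beyond the difference of route there is one concrete gap: neither your neck analysis nor your claimed $W^{2,q}$ bounds in the $\eps$-regularity step can be run on $u_k$ directly. Since $\ed E(u_k)$ is controlled only in $H^{-1}$, the distribution $-\Dl u_k$ is \emph{not} an exact Jacobian, so the Hardy-space bound of \cite{CLMS93} --- the source of your $L^{2,1}$ control --- does not apply to it, and an $H^{-1}$ right-hand side is likewise incompatible with genuine $W^{2,q}$ estimates. The fix is precisely the paper's step (i): pass to $v_k=u_k-g_k$, after which the right-hand side is an exact Jacobian of $\dot H^1$-bounded maps up to terms vanishing in $H^{-1}$ (by \eqref{est:Wente}), and both Wente's $L^\infty$ estimate and the Hardy/Lorentz machinery become available. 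This preliminary reduction is absent from your proposal; without it the key analytic inputs you cite are not applicable, while with it your plan becomes a viable, if longer, alternative to the iterative subtraction argument of \cite{BC85} that the paper adopts.
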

\begin{remark}
Notice that we cannot easily conclude that any ``stronger'' notions of bubble-tree convergence hold (e.g. convergence in $L^\infty$) unless we assume $f_k:=-\Dl u_k - \D u_k\dot\wedge \Db u_k\to 0$ in a stronger sense (e.g. in $L^2$).
\end{remark}
\begin{remark}\label{rmk:A2}
In contrast to Section \ref{sec:main-proofs} we can use a very rough notion of a glued in bubble on $\Sigma$, since we are not claiming any quantitative asymptotics at this stage. However, as recalled in the introduction, if $\Sigma \not\cong \Sp^2$ then there do not exist critical points $u\in \dot H^1(\Sigma,\R^3)$ of $E$  with $E(u) \leq \frac{4\pi}{3}$. Thus given a Palais-Smale sequence $\{u_k\}$ with $E(u_k)\to \frac{4\pi}{3}$, then the above result yields $\dist_{\dot{H}^1}(u_k ,\mathcal{Z})\to 0$. Thus we may apply our main theorems to obtain quantitative control on convergence properties of such Palais-Smale sequences.  \end{remark}

\begin{proof}[Sketch of the Proof of Theorem  \ref{thm:PS}] We will simply outline how to adapt the proof of \cite[Theorem 1]{BC85} to this setting. The proof there follows from a series of Lemmas 1-4 , two of which we will re-write in our setting and leave the details to the interested reader (each proof follows, more or less, exactly along the lines of its associated proof in \cite{BC85}). Notice first of all that we do not need to re-state Lemmas 2 or 3 in \cite{BC85} since they are already in the form that we require. 

First of all  we may assume without loss of generality that $u_k$ is uniformly bounded in $L^\infty$: let $g_k\in \dot H^1$ solve $\lan g_k, v\ra_{\dot H^1} = \ed E(u_k)[v]$ for all $v\in \dot H^1$, so that $\|g_k\|_{\dot H^1}\to 0$. We set $v_k = u_k - g_k$ and we have  
\begin{equation*}
	-\Dl v_k 
	= \D v_k \dot \wedge \Db v_k + 2 \D v_k \dot \wedge \Db g_k + \D g_k \dot\wedge \Db g_k . 
\end{equation*}
Setting $f_k = 2 \D v_k \dot \wedge \Db g_k + \D g_k \dot\wedge \Db g_k$, estimate \eqref{est:Wente} immediately gives that $\|f_k\|_{H^{-1}}\to 0$ so that $\{v_k\}$ is Palais-Smale and by definition $\|v_k-u_k\|_{\dot H^1}\to 0$. Furthermore, Wente's estimate \eqref{eq:West} gives $\|v_k\|_{L^\infty} \leq C$ for some uniform constant $C$, giving the claimed bounds. We will abuse notation and now continue to work with $u_k$ under the assumption that it is also uniformly bounded in $L^\infty$. 

We trivially have the existence of some $u\in \dot H^1(\Sigma,\R^3)$ so that, for a subsequence, $u_k\rightharpoonup u$ in $H^1$. The first Lemma states that if $u_k$ does not converge strongly to $u$ then we must be able to find a non-trivial bubble by suitably re-scaling the sequence around a shrinking ball: 

\begin{lemma}[cf \cite{BC85} Lemma 1] \label{lem:BC1}
Suppose that $\{u_k\}$ satisfies the hypotheses of Theorem \ref{thm:PS} with $\|u_k\|_{L^\infty}$ uniformly bounded. Then either a subsequence $u_k\to u$ converges strongly in $H^1$, or there exist $a_k\to a\in \Sigma$ and $r_k\to 0$, an associated non-trivial solution $\om$ of \eqref{eq:om} and isometries $F_k$ so that a subsequence, $\tilde{u}_k (\cdot) = u_k(r_k F_k^{-1}(\cdot))\to \om$ almost everywhere in $\R^2$ and $\D \tilde{u}_k\rightharpoonup \D \om$ weakly in $L_{loc}^2(\R^2)$. 
 	\end{lemma}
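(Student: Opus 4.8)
The plan is to run a concentration--compactness argument driven by an $\eps$-regularity estimate for approximate $H$-surfaces. Throughout I use the reduction already performed before the statement, so that $\norm{u_k}_{L^\infty}\le C$ uniformly and $-\Dl u_k=\D u_k\dot\wedge\Db u_k+f_k$ with $\norm{f_k}_{H^{-1}}\to0$, and I fix the weak $\dot H^1$-limit $u$ (after passing to a subsequence).

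The key analytic input, which I would establish first, is an $\eps$-regularity statement: there exists $\eps_1>0$ such that whenever $\int_{B_{2r}(p)}|\D u_k|^2<\eps_1$ in a coordinate ball, the sequence $\{u_k\}$ is precompact in $H^1(B_r(p))$ and no energy is lost in the limit. This is exactly where the special structure of the $H$-surface nonlinearity enters: the Jacobian $\D u_k\dot\wedge\Db u_k$ lies in the Hardy space with a quadratic bound, and the Wente/Lorentz-duality machinery already used in the proof of Lemma \ref{lem:nondeg} (the $L^{2,1}$--$L^{2,\infty}$ duality together with the Coifman--Lions--Meyer--Semmes estimate) upgrades small $L^2$-energy into a gain of integrability for $\D u_k$, while the error $f_k\to0$ in $H^{-1}$ is subcritical and harmless. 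I expect this to be the main obstacle: with only $H^{-1}$-control on the error one cannot invoke elliptic regularity naively, so the compensated-compactness structure is essential.

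Given this, I would introduce the concentration function $Q_k(r):=\sup_{p\in\Si}\int_{B_r(p)}|\D u_k|^2$, which is continuous and nondecreasing in $r$ with $Q_k(\diam\Si)\le\Lambda:=\sup_k\int_\Si|\D u_k|^2$, and fix a threshold $\eps_0\in(0,\eps_1)$. If along a subsequence $Q_k(r_0)\le\eps_0$ for some fixed $r_0>0$, then $\eps$-regularity applies on every ball of radius $r_0$; covering $\Si$ by finitely many such balls yields precompactness in $H^1(\Si)$ and hence strong convergence $u_k\to u$, which is the first alternative. Otherwise, by continuity I may select $a_k\in\Si$ and $r_k>0$ with $\int_{B_{r_k}(a_k)}|\D u_k|^2=Q_k(r_k)=\eps_0$. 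One checks $r_k\to0$: were $r_k\ge\delta>0$ along a subsequence, energy would be capped at $\eps_0<\eps_1$ on all balls of radius $\delta$, and the same covering argument would force strong convergence, contradicting the failure of the first alternative.

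Finally I would rescale. Using the isothermal charts $F_k=F_{a_k}$ of Remark \ref{rmk:def-Fa}, set $\tilde u_k(x):=u_k(F_k^{-1}(r_k x))$ on $\DD_{\rho/r_k}$, which exhausts $\R^2$. By conformal invariance of the two-dimensional Dirichlet energy and of the $H$-surface equation, $\tilde u_k$ solves $-\Dl_{g_k}\tilde u_k=\D\tilde u_k\dot\wedge\Db\tilde u_k+\tilde f_k$ for the rescaled metric $g_k\to g_E$ in $C^\infty_{\loc}$, with $\norm{\tilde f_k}_{H^{-1}(\R^2)}=\norm{f_k}_{H^{-1}(\Si)}\to0$, with $\norm{\tilde u_k}_{L^\infty}\le C$ and uniformly bounded total energy; moreover the normalization becomes $\sup_p\int_{B_1(p)}|\D\tilde u_k|^2=\eps_0<\eps_1$. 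Since the energy on every unit ball now lies below the $\eps$-regularity threshold, the sequence is precompact in $H^1_{\loc}(\R^2)$ and a subsequence converges strongly there to some $\om$; passing to the limit in the equation (the metric tends to $g_E$, $\tilde f_k\to0$ in $H^{-1}_{\loc}$, and strong $H^1_{\loc}$-convergence lets us pass to the limit in the Jacobian) gives $-\Dl\om=\D\om\dot\wedge\Db\om$ on $\R^2$. The limit is nonconstant because $\int_{B_1(0)}|\D\om|^2=\eps_0>0$, and has finite energy by lower semicontinuity, $\int_{\R^2}|\D\om|^2\le\liminf_k\int_\Si|\D u_k|^2<\infty$; hence $\om$ is a genuine bubble of the form classified in \eqref{eq:om} via \cite[Lemma A.1]{BC85}. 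Strong $H^1_{\loc}$-convergence in particular yields the stated almost-everywhere convergence and the weak $L^2_{\loc}$-convergence of the gradients, completing the dichotomy.
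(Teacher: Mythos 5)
Your proposal is correct and follows essentially the same route as the paper: the paper gives no independent argument for this lemma, deferring entirely to the proof of Lemma 1 in \cite{BC85}, and your concentration-function dichotomy, the threshold selection of $(a_k,r_k)$, and the rescaling through the isothermal charts of Remark \ref{rmk:def-Fa} is precisely that classical argument adapted to the closed-surface setting. The one ingredient you state rather than prove, the small-energy $\eps$-regularity/compactness step, is likewise the key input in \cite{BC85}; note only that it follows from a Wente-type absorption of the quadratically small Jacobian part (plus compactness of the harmonic part and vanishing of the $H^{-1}$ errors), not merely from a ``gain of integrability'' for $\D u_k$, which by itself would not yield $H^1_{\loc}$-compactness at this critical scaling.
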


The final lemma now guarantees that we can remove the resulting bubble $\om$ from $u_k$ and remain a Palais-Smale sequence. 
 
\begin{lemma}[cf \cite{BC85} Lemma 4]\label{lem:BC2}
Suppose that $\{u_k\}$ satisfies the hypotheses of Lemma \ref{lem:BC1} and that $u_k$ does not converge strongly to $u$ in $H^1$. Then with the notation as in Lemma \ref{lem:BC1} and Theorem \ref{thm:PS}
$$\text{$v_k = u_k - \phi_k(\cdot)(\om(r_k^{-1}F_k(\cdot)) - 
\om(\infty))$}$$ is a Palais-Smale sequence which is bounded in $L^\infty$ and $v_k \rightharpoonup u$ in $\dot H^1$. Furthermore 
$$\|\D v_k\|_{L^2(\Sigma)}^2 = \|\D u_k\|_{L^2(\Sigma)}^2 - \|\D \om\|_{L^2(\R^2)}^2 + o(1).$$
\end{lemma}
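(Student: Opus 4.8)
The plan is to adapt the proof of \cite[Lemma 4]{BC85} to the closed surface $(\Sigma,g)$, keeping track of the conformal factor of $g$ and of the cut-off $\phi_k$. Throughout I write $B_k := \phi_k(\cdot)\big(\om(r_k^{-1}F_k(\cdot))-\om(\infty)\big)$ for the glued-in bubble, so that $v_k=u_k-B_k$, and I work in the isothermal coordinates $x=F_k(p)$ near $a$; there the Dirichlet energy and the first variation \eqref{eq:dE} are conformally invariant, and $\om$ solves the flat equation \eqref{eq:om} exactly.

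First I would settle the two elementary assertions. Since $\om$ solves \eqref{eq:om} it equals $\bl(P/Q)$ up to a constant, hence is smooth and bounded with $\om-\om(\infty)=O(|x|^{-1})$ and $\nabla\om=O(|x|^{-2})$ as $|x|\to\infty$; thus $B_k$ is uniformly bounded and $v_k$ inherits the $L^\infty$-bound of $u_k$. Moreover $B_k$ concentrates at $a$ at scale $r_k\to0$, so $\|B_k\|_{\dot H^1}$ stays bounded while $B_k\rightharpoonup0$ in $\dot H^1$ (it pairs to zero with every fixed $H^1$-function $\zeta$ since $\|\nabla\zeta\|_{L^2}$ over the shrinking support vanishes); together with $u_k\rightharpoonup u$ this yields $v_k\rightharpoonup u$.

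For the energy identity I would expand $\|\nabla v_k\|_{L^2}^2=\|\nabla u_k\|_{L^2}^2-2\langle\nabla u_k,\nabla B_k\rangle_{L^2}+\|\nabla B_k\|_{L^2}^2$ and evaluate the last two terms by passing to the coordinates $F_k$ (conformal invariance) and rescaling $y=r_k^{-1}x$. Under this rescaling $\nabla B_k$ becomes $\nabla\big(\phi(r_k\cdot)(\om-\om(\infty))\big)$, which tends to $\nabla\om$ strongly in $L^2(\R^2)$ once the neck term $r_k\nabla\phi(r_k\cdot)(\om-\om(\infty))$ is discarded using the decay of $\om-\om(\infty)$; hence $\|\nabla B_k\|^2\to\|\nabla\om\|^2$. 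Pairing the weak limit $\nabla\tilde u_k\rightharpoonup\nabla\om$ from Lemma \ref{lem:BC1} against this strongly convergent factor gives $\langle\nabla u_k,\nabla B_k\rangle\to\|\nabla\om\|^2$, and the three contributions combine to $\|\nabla u_k\|^2-\|\nabla\om\|^2+o(1)$.

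The main work, and the principal obstacle, is to show $\ed E(v_k)\to0$ in $H^{-1}$. Using \eqref{eq:dE} and the symmetry of $\nabla w\dot\wedge\Db v$, I would write out $\ed E(v_k)[\psi]-\ed E(u_k)[\psi]$, substitute $u_k=v_k+B_k$ in its nonlinear part, and integrate the linear piece $-\int\nabla B_k\cdot\nabla\psi$ by parts. Because $\om$ solves the flat equation exactly, $-\Delta B_k=\nabla B_k\dot\wedge\Db B_k+E_k$ with an error $E_k$ supported in the neck annulus $\{r_k\lesssim|x|\lesssim1\}$, so the bubble self-interactions cancel identically and one is left with $-\int E_k\cdot\psi$ together with the body--bubble interaction $\int\psi\cdot\big(\nabla v_k\dot\wedge\Db B_k+\nabla B_k\dot\wedge\Db v_k\big)$. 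The neck error obeys $\|E_k\|_{H^{-1}}\to0$ by the decay of $\om-\om(\infty)$ and $\nabla\om$ on the neck. The heart of the matter is the interaction term: after rescaling, the body $\tilde v_k=v_k(F_k^{-1}(r_k\cdot))$ converges to the constant $\om(\infty)$, so $\nabla v_k$ has asymptotically vanishing $L^2$-mass on the balls $B_{\Lambda r_k}(a)$ that carry the bubble's gradient, while $\nabla B_k$ has vanishing $L^2$-mass outside them. Exploiting the null (Jacobian) structure via the Wente estimate \eqref{est:Wente} in localized form --- equivalently the Hardy-space bound of \cite{CLMS93} paired with the $BMO$-control of $\psi$ --- one bounds the interaction by $o(1)\|\psi\|_{\dot H^1}$, letting first $k\to\infty$ and then $\Lambda\to\infty$. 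Making this estimate uniform in $\psi$, rather than merely for fixed test functions, is precisely the delicate point, and it is where the Lorentz/Hardy-space technology already used in the proof of Lemma \ref{lem:nondeg} (cf. \cite{CLMS93,LinR01}) must be brought to bear.
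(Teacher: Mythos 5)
Your skeleton -- the $L^\infty$ bound from the decay of $\om$, the weak convergence $v_k\rightharpoonup u$, the energy identity obtained by expanding $\|\D v_k\|^2=\|\D u_k\|^2-2\langle \D u_k,\D B_k\rangle+\|\D B_k\|^2$ and pairing weak against strong convergence, and the reduction of the Palais--Smale property to a neck error $E_k$ plus the body--bubble interaction $\D v_k\dot\wedge\Db B_k$ -- is exactly the route the paper intends; the paper itself gives no details and defers to \cite[Lemma 4]{BC85}. Those parts of your argument are correct, and notably they require only the weak convergence supplied by Lemma \ref{lem:BC1}.

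The gap is in your treatment of the interaction term, which is the whole content of the lemma. You assert that since $\tilde v_k\to\om(\infty)$, ``$\D v_k$ has asymptotically vanishing $L^2$-mass on the balls $B_{\Lambda r_k}(a_k)$''. This inference needs \emph{strong} $H^1_{loc}$ convergence of $\tilde u_k$ to $\om$, whereas Lemma \ref{lem:BC1} as stated provides only a.e.\ convergence and weak $L^2_{loc}$ convergence of gradients; under these hypotheses the assertion is simply false in general. Indeed, $u_k$ may carry a second bubble at a scale $s_k\ll r_k$ concentric with the first: the extraction of the outer bubble satisfies all conclusions of Lemma \ref{lem:BC1}, yet $\|\D v_k\|^2_{L^2(B_{\Lambda r_k}(a_k))}\geq 8\pi-o(1)$. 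This is not pathological -- it is precisely the situation the iteration in the proof of Theorem \ref{thm:PS} must allow for, since $v_k$ is permitted to bubble again. Your fallback does not rescue the step either: $\|\D\psi\dot\wedge\Db v_k\|_{h^1}\lesssim \|\psi\|_{H^1}\|v_k\|_{\dot H^1}$ paired against $B_k$ by $h^1$--$BMO$ duality gives only \emph{boundedness}, because $\|B_k\|_{BMO}\simeq\|\om-\om(\infty)\|_{BMO}$ is scale-invariant and does not tend to zero. Two repairs are available. Either strengthen Lemma \ref{lem:BC1} to what Brezis--Coron actually obtain: choosing $r_k$ via the concentration function $\sup_x\int_{B_r(x)}|\D u_k|^2$, the rescaled maps have uniformly small energy on every unit ball, and Wente-based $\eps$-regularity upgrades the convergence $\tilde u_k\to\om$ to strong $H^1_{loc}$ convergence (ruling out interior smaller-scale bubbles), after which your mass-separation argument is valid. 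Or keep only weak convergence and prove the interaction estimate directly, using the cyclic identity \eqref{eq:symm} to rewrite $\int\psi\cdot\D v_k\dot\wedge\Db B_k=\int (B_k-c)\cdot\D\psi\dot\wedge\Db v_k$, splitting into the bubble region and its complement, and controlling the possibly $|\log r_k|^{1/2}$-large averages of $\psi$ on small balls via Poincar\'e on annuli together with the $L^{2,1}$--$L^{2,\infty}$ duality used in the proof of Lemma \ref{lem:nondeg}; this is substantially more work and amounts to reproving \cite[Lemma 4]{BC85} from scratch.
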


Repeated applications of the above lemmas is sufficient to conclude the proof of the theorem: if $v_k$ converges strongly to $u$ in $H^1$ then we are done. If not we continue to apply the above lemmas, with each iteration lowering the Dirichlet energy by at least $8\pi$ until we obtain a strongly converging subsequence. 

\end{proof}
\bibliographystyle{plain}

\vspace{.5cm}

\begin{flushleft}
A. Malchiodi: Scuola Normale Superiore, Piazza dei Cavalieri 7, 56126 Pisa, Italy \\\textit{andrea.malchiodi@sns.it}

  M. Rupflin: Mathematical Institute, University of Oxford, Oxford OX2 6GG, UK \\   \textit{melanie.rupflin@maths.ox.ac.uk}

  B. Sharp: School of Mathematics, University of Leeds, Leeds LS2 9JT, UK \\\textit{b.g.sharp@leeds.ac.uk} 
\end{flushleft}

\end{document}